\documentclass{amsart}
\usepackage[utf8]{inputenc}
\usepackage{format}

\title{Local character expansions and asymptotic cones over finite fields}

\begin{document}

\begin{abstract}
    We generalise Gelfand-Graev characters to $\mathbb R/\mathbb Z$-graded Lie algebras and lift them to produce new test functions to probe the local character expansion in positive depth.
    We show that these test functions are well adapted to compute the leading terms of the local character expansion and relate their determination to the asymptotic cone of elements in $\mathbb Z/n$-graded Lie algebras.
    As an illustration, we compute the geometric wave front set of certain toral supercuspidal representations in a straightforward manner.
\end{abstract}

\subjclass[2020]{22E35, 22E50}

\maketitle

\tableofcontents


\section{Introduction}
Let $\mathbb{G}$ be a connected reductive group defined over a $p$-adic field $F$ with Lie algebra $\textfrak g$. 
We will require the residue characteristic $F$ to be larger than some constant depending on the absolute root datum of $\mathbb G$ and the ramification index of $F/\mathbb Q_p$ (c.f. hypotheses \ref{hyp:exp}, \ref{hyp:p}, and \ref{hyp:bilinear-form} for details).
Fix an algebraic closure $\bar F$ of $F$, let $G = \mathbb G(F)$, $\mathfrak g = \textfrak g(F)$ and let $\mathcal N(\mathfrak g^*)$ denote the nilpotent cone of the linear dual of $\mathfrak g$.
Let $C_c^\infty(\mathfrak g)$ denote compactly supported smooth functions on $\mathfrak g$ and let $\FT:C_c^\infty(\mathfrak g)\to C_c^\infty(\mathfrak g^*)$ denote the Fourier transform.
For a co-adjoint orbit $\mathcal O^*$ let $\mu_{\mathcal O^*}$ denote the associated $G$-invariant measure on $\mathfrak g^*$.

Let $(\pi,V)$ be an irreducible smooth representation of $G$ with distribution character $\Theta_\pi$. 
To each co-adjoint orbit $\mathcal O^*\in \mathcal N(\mathfrak g^*)/G$ we may attach a complex number $c_{\mathcal O^*}(\pi) \in \CC$ such that
\begin{equation}\label{eq:localcharacter}\Theta_{\pi}(f) = \sum_{\mathcal O^*\in \mathcal N(\mathfrak g^*)/G} c_{\mathcal O^*}(\pi) \FT(\mu_{\mathcal O^*})(f\circ\exp)\end{equation}
for any $f\in C_c^\infty(G)$ supported on a sufficiently small neighborhood of $1$. 
Here $\exp$ is the exponential map or a suitable variant. 
The formula (\ref{eq:localcharacter}) is called the \emph{local character expansion} of $\pi$ and is due to Howe \cite{Howe1974} for $GL_n$ and Harish-Chandra \cite{HarishChandra1999} in general. 
The maximal open set $U_\pi\ni 1$ on which Equation \ref{eq:localcharacter} holds is called the domain of validity for the local character expansion.

The coefficients $c_{\mathcal{O}^*}(\pi)$ are expected to encode significant information about the representation $\pi$. For instance, the Hiraga--Ichino--Ikeda conjecture \cite{HII} links $c_{0}(\pi)$ to the adjoint gamma factor of the $L$-parameter of $\pi$, when $\pi$ is square integrable. For unipotent representations with real infinitesimal character, the monodromy of the $L$-parameter of the Aubert-Zelevinsky dual of $\pi$ is determined by the leading terms of the local character expansion as demonstrated by \cite{ciubotaru2023wavefront}.
The leading terms of the local character expansion, with respect to the topological closure order,
$$\WF(\pi):=\max\{\mathcal O^*:c_{\mathcal O^*}(\pi)\ne0\},$$
are called the {\it wave front set}. 

To compute the coefficients of the local character expansion, one evaluates both sides of equation \ref{eq:localcharacter} with appropriately chosen test functions and solves the resulting system of equations. Although evaluating the precise values of orbital integrals on test functions may be challenging, if one has for each co-adjoint nilpotent orbit $\mathcal O^*$ a test function $f_{\mathcal O^*}$ satisfying
\begin{enumerate}
    \item the support of $f_{\mathcal O^*}$ is contained in $U_\pi$,
    \item $\FT(\mu_{\mathcal O^*})(f_{\mathcal O'^*})\ne0$ only if $\mathcal O'^*\le \mathcal O^*$ and,
    \item $\FT(\mu_{\mathcal O^*})(f_{\mathcal O^*})\ne0$,
\end{enumerate}
then the resulting system of equations is upper triangular. 
This simplifies the system of equations and gives rise to a particularly simple way to compute the wave front as
$$\WF(\pi) = \max\{\mathcal O^*: \mathcal O^*\in \mathcal N(\mathfrak g^*)/G, \Theta_\pi(f_{\mathcal O^*})\ne 0\}.$$

We refer to \cite{MW87,barmoy,debacker,murnaghan} for important examples of such test functions.

The aim of this paper is to generalise the test functions defined in \cite{barmoy} so that they can be applied to representations with positive depth. We pay special attention to the implications for the wave front set.
We are motivated by the results in \cite{ciubotaru2022wavefront, ciubotaru2024wavefront}, which demonstrate the effectiveness of Barbasch and Moy's test functions in computing the wave front set for supercuspidal representations of depth 0.

Let us now state the main results of this paper in more detail.



\subsection{Statement of main results}
Let $\mathscr B(\mathbb G,F)$ denote the (enlarged) Bruhat-Tits building for $G$.
For $x\in \mathscr B(\mathbb G,F)$ and $r\ge 0$ (resp. $r\in \mathbb R$) let $G_{x,r}$ (resp. $\mathfrak g_{x,r}$, $\mathfrak g^*_{x,r}$) denote the Moy-Prasad filtration subgroups for $G$ (resp. $\mathfrak g$, $\mathfrak g^*$) associated to the point $x$.
Let
$$\mathfrak g_{r^+} = \bigcup_{x\in \mathscr B(\mathbb G,F)}\mathfrak g_{x,r^+}, \quad G_{r^+} = \bigcup_{x\in \mathscr B(\mathbb G,F)}G_{x,r^+}$$
and define $\underline{\mathfrak g}_{x,r} := \mathfrak g_{x,r}/\mathfrak g_{x,r^+}, \underline{\mathfrak g}^*_{x,r} := \mathfrak g^*_{x,r}/\mathfrak g^*_{x,r^+}, \underline G_{x,r} := G_{x,r}/G_{x,r^+}, \underline G_x := \underline G_{x,0}$.
Let 
$$\mathscr L_{x,r}:\mathcal N(\underline {\mathfrak g}_{x,r}^*)/\underline G_x\to \mathcal N(\mathfrak g^*)/G$$
denote the lifting map of co-adjoint orbits defined by DeBacker in \cite{debacker}.
For $\underline{\mathcal O}^*\in \mathcal N(\underline{\mathfrak g}_{x,r}^*)/\underline G_x$ the orbit $\mathscr L_{x,r}(\underline{\mathcal O}^*)$ is characterised by the property that it is the minimal-dimensional co-adjoint nilpotent orbit with a representative for $\underline{\mathcal O}^*$ in $\mathfrak g^*_{x,r}$.

Our first result is the existence of test functions satisfying conditions (1)-(3) from the introduction.
By \cite{waldspurger95,debacker2}, if an irreducible smooth representation $(\pi,G)$ has depth $r$ then $U_\pi \supseteq G_{r^+}$ and so it suffices to produce test functions supported on $G_{r^+}$.

\begin{theorem}[Lemma \ref{lem:eval}, Proposition \ref{prop:orbitalint}]
    Let $r\ge 0$.
    For every $x\in \mathscr B(\mathbb G,F)$ and $\underline{\mathcal O}^*\in \mathcal N(\underline{\mathfrak g}^*_{x,-r})/\underline G_x$ the test functions $f_{x,r,\underline{\mathcal O}^*}$ defined in Definition \ref{d:test-function} satisfy
    \begin{enumerate}
        \item $\supp(f_{x,r,\underline{\mathcal O}^*}) \subset G_{r^+}$,
        \item $\FT(\mu_{\mathcal O^*})(f_{x,r,\underline{\mathcal O}^*}\circ\exp)\ne0$ only if $\mathscr L_{x,-r}(\underline{\mathcal O}^*)\le \mathcal O^*$ and,
        \item $\FT(\mu_{\mathscr L_{x,-r}(\underline {\mathcal O}^*)})(f_{x,r,\underline{\mathcal O}^*}\circ\exp)\ne0.$
    \end{enumerate}
    
    Note that for any $\mathcal O^*\in \mathcal N(\mathfrak g^*)/G$ there exists $x\in \mathscr B(\mathbb G,F)$ and $\underline{\mathcal O}^*\in \mathcal N(\underline{\mathfrak g}_{x,-r})/\underline G_x$ such that $\mathscr L_{x,-r}(\underline {\mathcal O}^*) = \mathcal O^*$ so we can define a test function for any nilpotent co-adjoint orbit.
\end{theorem}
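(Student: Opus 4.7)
The plan is to reduce each of the three claims to statements about (generalised) Gelfand--Graev data on the finite reductive quotient $\underline G_x$ acting on the $\mathbb{Z}/n$-graded piece $\underline{\mathfrak g}_{x,r}$, and its linear dual $\underline{\mathfrak g}^*_{x,-r}$, via Moy--Prasad descent. Based on the abstract and title, I expect $f_{x,r,\underline{\mathcal O}^*}$ (Definition \ref{d:test-function}) to be built from a generalised Gelfand--Graev function on $\underline{\mathfrak g}_{x,r}$ attached to $\underline{\mathcal O}^*$, inflated to $\mathfrak g_{x,r}$, truncated via a cutoff, then transported through $\exp^{-1}$ to a function on $G$. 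Granting such a description, condition (1) is immediate: the test function is supported inside $G_{x,r^+}\subseteq G_{r^+}$ by construction.

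For condition (2), the strategy is to rewrite $\FT(\mu_{\mathcal O^*})(f_{x,r,\underline{\mathcal O}^*}\circ\exp)$ as a sum over residue classes of the Moy--Prasad dual filtration. Using the identification $\underline{\mathfrak g}^*_{x,-r}=(\underline{\mathfrak g}_{x,r})^*$ (the standard Moy--Prasad duality) and a Fubini/Fourier-inversion manipulation on $\mathfrak g^*_{x,-r}/\mathfrak g^*_{x,(-r)^+}$, the $p$-adic orbital integral collapses to a finite sum indexed by those classes $\underline{\mathcal O}'^{*}\in\mathcal N(\underline{\mathfrak g}^*_{x,-r})/\underline G_x$ which have a representative that is $G$-conjugate into $\mathcal O^*$. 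By the defining property of DeBacker's lift --- namely, that $\mathscr L_{x,-r}(\underline{\mathcal O}'^*)$ is the minimal-dimensional coadjoint nilpotent orbit meeting the residue class $\underline{\mathcal O}'^*$ --- this condition is equivalent to $\mathscr L_{x,-r}(\underline{\mathcal O}'^*)\le \mathcal O^*$ in closure order. The vanishing of the generalised Gelfand--Graev pairing on the finite side against any $\underline{\mathcal O}'^*$ not above $\underline{\mathcal O}^*$ (the finite-field analog of conditions (2)--(3), a classical property of GGGR functions over $\mathbb{F}_q$) then transfers to the $p$-adic side, yielding (2).

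For condition (3), specialising $\mathcal O^* = \mathscr L_{x,-r}(\underline{\mathcal O}^*)$ in the same finite-sum expression reduces the claim to nondegeneracy: the pairing of the finite Gelfand--Graev function attached to $\underline{\mathcal O}^*$ against the $\underline G_x$-orbital integral of $\underline{\mathcal O}^*$ is nonzero. This is again a standard property of generalised Gelfand--Graev constructions, lifted through the Moy--Prasad descent. The trailing remark is unrelated to the construction: it says that $\bigsqcup_{x}\mathscr L_{x,-r}$ is surjective onto $\mathcal N(\mathfrak g^*)/G$, which is precisely DeBacker's parametrisation theorem applied at depth $-r$.

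The main obstacle is the Moy--Prasad descent step in (2). One needs to verify that the Fourier transform of the $G$-invariant measure $\mu_{\mathcal O^*}$, when evaluated against a function depending only on the image in $\underline{\mathfrak g}_{x,r}$, genuinely decomposes as a sum of finite-field orbital integrals indexed by the classes $\underline{\mathcal O}'^{*}$ with $\mathscr L_{x,-r}(\underline{\mathcal O}'^{*}) \le \mathcal O^{*}$, and that the combinatorics of closure order match. This requires homogeneity/descent results of DeBacker controlling how an orbital integral restricts to a Moy--Prasad lattice and reduces modulo the next step. Once this matching is established, the rest of the argument is formal, driven by the Gelfand--Graev formalism on the graded piece $\underline{\mathfrak g}_{x,r}$ developed earlier in the paper.
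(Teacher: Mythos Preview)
Your argument for (1) contains a gap: by construction $L^G_{x,r}$ inflates along $G_{x,r}\to\underline G_{x,r}$ and extends by zero, so $f_{x,r,\underline{\mathcal O}^*}$ is supported in $G_{x,r}$, \emph{not} in $G_{x,r^+}$; and $G_{x,r}\not\subseteq G_{r^+}$ in general. The missing ingredient is that $\Gamma_{\underline{\mathcal O}^*}$ is supported on the \emph{nilpotent} locus of $\underline{\mathfrak g}_{x,r}$ (Proposition~\ref{prop:gggg}(2)), after which \cite[Proposition 6.3]{moyprasad} gives that any coset of $\mathfrak g_{x,r^+}$ in $\mathfrak g_{x,r}$ with nilpotent image already lies in $\mathfrak g_{r^+}$.

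For (2) and (3) the paper does not decompose $\FT(\mu_{\mathcal O^*})(f^{\mathfrak g}_{x,r,\underline{\mathcal O}^*})$ as a finite sum of residue-field orbital integrals; it uses a pure support argument on the dual side. By Lemma~\ref{lem:ft} one has $\FT(f^{\mathfrak g}_{x,r,\underline{\mathcal O}^*})=C_{x,r}\,L^{\mathfrak g^*}_{x,-r}(\FT_{x,r}(\Gamma_{\underline{\mathcal O}^*}))$, and this function is \emph{non-negative} by equation~\eqref{eq:ftgggg}. Hence $\mu_{\mathcal O^*}(\FT(f^{\mathfrak g}_{x,r,\underline{\mathcal O}^*}))\ne0$ exactly when $\mathcal O^*$ meets some coset $\beta+\mathfrak g^*_{x,(-r)^+}$ with $\underline G_x.\beta\cap\Sigma_{\underline{\mathcal O}^*}\ne\emptyset$; this immediately yields (3), since $\underline{\mathcal O}^*$ itself is in that support. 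For (2), the step you flag as the main obstacle is handled not by a DeBacker homogeneity/descent result but by a successive-approximation lemma proved in the paper (Lemmas~\ref{lem:conj} and~\ref{lem:liftingorder}): if $X\in\mathfrak g_{x,-r}$ has image lying in $\underline E+\underline{\mathfrak g}_{x,-r}(\le0)$ for a graded $\mathfrak{sl}_2$-triple $(E,H,F)$, then $X$ is genuinely conjugate into $E+\mathfrak g_{x,-r}(\le0)$, after which \cite[Proposition 3.12]{barmoy} forces $\mathscr L_{x,-r}(\underline{\mathcal O}^*)\le\mathcal O^*$. Your proposed route through a finite-field closure-order property of GGGR functions followed by monotonicity of $\mathscr L_{x,-r}$ would still require exactly this lifting lemma (it is what underlies Corollary~\ref{cor:order-lift}), so it is the missing idea rather than a black box you can cite.
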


Our next main theorem relates the determination of
$$\max\{\mathscr L_{x,-r}(\underline{\mathcal O}^*): \underline{\mathcal O}^*\in \mathcal N(\underline{\mathfrak g}^*_{x,-r})/\underline G_x, \Theta_\pi(f_{x,r,\underline{\mathcal O}^*})\ne 0\}$$
for a fixed $x\in \mathscr B(\mathbb G,F)$ and $r>0$ (not necessarily equal to the depth of $\pi$), to a problem in finite field geometry.

To simplify the exposition of this introduction, we will present our main results under the assumption that $r$ is rational (in which case we can replace $\mathbb R/\mathbb Z$-gradings with $\mathbb Z/n$-gradings).

\begin{definition}
    Let 
    $$\mathfrak g = \bigoplus_{i\in \mathbb Z/n}\mathfrak g_i$$
    be a $\mathbb Z/n$-graded Lie algebra over $\mathbb F_q$.
    \begin{enumerate}
        \item We say an element $x\in \mathfrak g_i$ \emph{degenerates} to a nilpotent element $e\in\mathfrak g_i$ if $e$ can be completed to an $\mathfrak {sl}_2$-triple $e\in \mathfrak g_i, h\in \mathfrak g_0, f\in \mathfrak g_{-i}$ such that
        $$x\in e+\bigoplus_{j\le0}\mathfrak g_i(j)$$
        where $\mathfrak g_i(j)$ denotes the grading induced by the triple (c.f definition \ref{hyp:graded}.(4)).
        \item If $\mathfrak g$ admits a non-degenerate $\ad$-invariant bilinear form $B$ on $\mathfrak g$ such that
        $$B(\mathfrak g_i,\mathfrak g_j) = 0, \quad \text{ if } i+j\ne 0$$
        then $B$ induces an isomorphism 
        $$\eta_B:\mathfrak g_i^*\xrightarrow{\sim}\mathfrak g_{-i}$$
        and we say $\alpha\in \mathfrak g_i^*$ degenerates to a nilpotent $\beta\in \mathfrak g_i^*$ if $\eta_B(\alpha)$ degenerates to $\eta_B(\beta)$.
        \item For a subset $\Psi\subset \mathfrak g_i^*$ we define \emph{the rational asymptotic cone} of $\Psi$, denoted $\mathrm{cone}(\Psi)$, to be the set of nilpotent elements in $\mathfrak g_i^*$ that are degenerated from an element in $\Psi$.
    \end{enumerate}
\end{definition}

Fix a point $x\in \mathscr B(\mathbb G,F)$ and a rational number $r=m/n>0$.
If we normalise the valuation $\nu$ on $F$ so that $\nu(F^\times) = \mathbb Z$ then the quotients $\underline{\mathfrak g}_{x,s}$ of the Moy-Prasad filtration are $\mathbb Z$-periodic.
Thus
$$\underline {\mathfrak g}^r_x:=\bigoplus_{i\in \mathbb Z/n}\underline{\mathfrak g}_{x,ri}$$
is a $\mathbb Z/n$-graded Lie algebra defined over the residue field of $F$, and we can talk about degenerations and asymptotic cones of elements in $\underline{\mathfrak g}_{x,r}^*$. Denote by $C(\underline{\mathfrak g}_{x,r}^*)$ the space of complex-valued functions on $\underline{\mathfrak g}_{x,r}^*$ and let $f^*$ denote the complex conjugate of a function $f\in C(\underline{\mathfrak g}_{x,r}^*)$.

\begin{theorem}\label{thm:A}[Theorem \ref{thm:shallow}]
    Let $(\pi,V)$ denote a smooth irreducible representation of $G$ of depth $d(\pi)$ and fix a point $x\in \mathscr B(\mathbb G,F)$ in the building.
    Let $r\in \mathbb Q_{>0}$ and $r\ge d(\pi)$.
    Define $\underline \pi_{x,r}:=\pi^{G_{x,r^+}}$ and let ${\theta}_{\pi,x,r}$ denote the character of $\underline\pi_{x,r}$ as a representation of $\underline G_{x,r}\cong \underline {\mathfrak g}_{x,r}$.
    
    Then
    $$\Theta_\pi(f_{x,r,\underline{\mathcal O}^*}) \ne 0$$
    if and only if $\underline{\mathcal O}^*$ lies in the rational asymptotic cone of the support of $\FT(\theta^*_{\pi,x,r})\in C(\underline{\mathfrak g}_{x,r}^*)$.

    In particular
    $$\WF(\pi) = \max\bigcup_{x\in \mathscr C}-\mathscr L_{x,-r}(\mathrm{cone}(\supp(\FT({\theta}_{\pi,x,r})))).$$
\end{theorem}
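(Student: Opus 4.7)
The plan is to reduce the $p$-adic trace $\Theta_\pi(f_{x,r,\underline{\mathcal O}^*})$ to a finite Fourier-theoretic pairing on the Moy-Prasad quotient, and then extract the degeneration statement from an explicit computation of the support of the Fourier transform of the test function.

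First I would exploit the structure of $f_{x,r,\underline{\mathcal O}^*}$. By its construction in Definition \ref{d:test-function}, this function is supported in $G_{x,r}$ (in particular in $G_{r^+}$, so inside $U_\pi$ by the cited results of Waldspurger and DeBacker) and is bi-invariant under $G_{x,r^+}$, hence factors through the finite quotient $\underline G_{x,r} \cong \underline{\mathfrak g}_{x,r}$. Since $r \ge d(\pi)$, convolving with the characteristic function of $G_{x,r^+}$ projects $V$ onto $V^{G_{x,r^+}} = \underline\pi_{x,r}$, and the trace of $\pi(f_{x,r,\underline{\mathcal O}^*})$ reduces to the trace on the finite-dimensional space $\underline\pi_{x,r}$:
\begin{equation*}
\Theta_\pi(f_{x,r,\underline{\mathcal O}^*}) = \mathrm{tr}\bigl(\underline\pi_{x,r}(\bar f_{x,r,\underline{\mathcal O}^*})\bigr),
\end{equation*}
where $\bar f$ is the descent of $f_{x,r,\underline{\mathcal O}^*}$ to $\underline{\mathfrak g}_{x,r}$ under the Moy-Prasad isomorphism.

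Next I would rewrite this trace as the sum of $\theta_{\pi,x,r}(X)\bar f_{x,r,\underline{\mathcal O}^*}(X)$ over $X \in \underline{\mathfrak g}_{x,r}$, and apply Parseval on this finite abelian group to pass to a pairing of $\FT(\theta^*_{\pi,x,r})$ with $\FT(\bar f_{x,r,\underline{\mathcal O}^*})$ on the dual space $\underline{\mathfrak g}_{x,r}^*$. The \emph{crux} of the argument is then an explicit computation of $\supp(\FT(\bar f_{x,r,\underline{\mathcal O}^*}))$: building on the graded Gelfand-Graev construction underlying Definition \ref{d:test-function}, I expect this support to be exactly the set of $\alpha \in \underline{\mathfrak g}_{x,r}^*$ for which $\underline{\mathcal O}^* \in \mathrm{cone}(\{\alpha\})$, i.e.\ the locus of elements degenerating to $\underline{\mathcal O}^*$. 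Granting this, the pairing is non-zero precisely when $\supp(\FT(\theta^*_{\pi,x,r}))$ meets the degeneration locus, which is exactly the condition that $\underline{\mathcal O}^*$ lies in $\mathrm{cone}(\supp(\FT(\theta^*_{\pi,x,r})))$.

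For the global wavefront set identity I would combine the previous theorem with the criterion just established. The upper-triangularity of the family $\{f_{x,r,\underline{\mathcal O}^*}\}$ (parts (1)-(3)) together with the surjectivity remark at the end of that theorem implies that $\WF(\pi)$ equals the maximum over $(x,\underline{\mathcal O}^*)$ of the orbits $\mathscr L_{x,-r}(\underline{\mathcal O}^*)$ with $\Theta_\pi(f_{x,r,\underline{\mathcal O}^*})\ne 0$; substituting the asymptotic-cone criterion and ranging $x$ over a set of representatives $\mathscr C$ of $G$-orbits in the building yields the displayed formula, with the sign $-\mathscr L_{x,-r}$ accounting for the complex conjugation $\theta \mapsto \theta^*$, which at the level of $\underline{\mathfrak g}_{x,r}^*$ translates to $\alpha \mapsto -\alpha$.

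I expect the main obstacle to be the Fourier-support computation for $\bar f_{x,r,\underline{\mathcal O}^*}$: this is the graded, positive-depth analogue of Kawanaka's identification of generalised Gelfand-Graev characters, and should follow from a careful analysis of graded $\mathfrak{sl}_2$-triples and non-degeneracy of $B$ on the grading of $\underline{\mathfrak g}_x^r$. Concretely, the integral of the additive character of $B(\cdot,\alpha)$ against the function used to build the test function should reproduce (up to a non-vanishing weight) the characteristic function of the degeneration locus of $\underline{\mathcal O}^*$. Once this is in place, the remaining steps are essentially bookkeeping on the Moy-Prasad filtration and an invocation of the triangularity already proved.
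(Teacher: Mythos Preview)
Your proposal is correct and follows essentially the same route as the paper: the trace reduction is Lemma~\ref{lem:eval}, the Fourier-support computation for the graded Gelfand--Graev function is exactly equation~\eqref{eq:ftgggg} in Proposition~\ref{prop:gggg} (with the non-negativity needed for the ``iff'' coming from Proposition~\ref{prop:gggg}(3) and Lemma~\ref{lem:coadjcharexp}, packaged as Lemma~\ref{lem:wf-cone-relation}), and the $\WF(\pi)$ identity is obtained by combining this with the triangularity of Proposition~\ref{prop:orbitalint} and the local character expansion. One minor correction: $\mathscr C$ is not a set of $G$-orbit representatives in the building but any set of points for which every nilpotent orbit lies in the image of some $\mathscr L_{x,-r}$, $x\in\mathscr C$.
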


Our next result gives a bound on the wave front set.
\begin{definition}\label{def:geo-cone}
    Let 
    $$\mathfrak g = \bigoplus_{i\in \mathbb Z/n}\mathfrak g_i$$
    be a $\mathbb Z/n$-graded Lie algebra over $\overline{\mathbb F}_q$ and $G$ be a reductive group defined over $\overline{\mathbb F}_q$ with Lie algebra $\mathfrak g_0$ and a compatible linear action on $\mathfrak g$ that preserves the grading.

    For an element $\alpha\in \mathfrak g^*_i$ we define the \emph{geometric asymptotic cone of $\alpha$} to be 
    $$\overline\cone(\alpha) := \mathcal N(\mathfrak g^*_i) \cap \overline{\{\lambda^2g.\alpha:g\in G,\lambda\in \overline{\mathbb F}_q^\times\}} \quad \subset \mathcal N(\mathfrak g^*_i)$$
    where the closure is taken with respect to the Zariski topology.
\end{definition}

Let $\Fun$ denote the maximal unramified extension of $F$ and $G^{un}=\mathbb G(\Fun),\mathfrak g^{un}=\textfrak g(\Fun)$. 
Note that the composition factors $\underline{\mathfrak g}_{x,r}$ of the Moy-Prasad filtration are the $\mathbb F_q$-rational points of the vector space $\underline{\mathfrak g}^{un}_{x,r}$ over $\overline{\mathbb F}_q$.
Similarly $\underline G_x$ is the $\mathbb F_q$-rational points of the reductive group $\underline G_{x}^{un}$ defined over $\mathbb F_q$ and so for a rational number $r=m/n$, the pair 
$$\left(\underline G_x^{un},\bigoplus_{i\in \mathbb Z/n}\underline{\mathfrak g}^{un}_{x,ir}\right)$$
satisfy the hypotheses of definition \ref{def:geo-cone}.

\begin{theorem}[Corollary \ref{cor:bound}]
    The rational asymptotic cone is a subset of the geometric asymptotic cone.
    
    In particular, in the notation of theorem \ref{thm:A}, the wave front set of $\pi$ is contained in the closure of
    $$\bigcup_{x\in \mathscr C}\mathscr L^{un}_{x,-r}(\overline\cone(\supp(\FT({\theta}_{\pi,x,r})))) \quad \subset \mathfrak g^{un,*}.$$
\end{theorem}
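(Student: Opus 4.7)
The heart of the statement is the element-wise assertion: if $\alpha \in \mathfrak g_i^*$ degenerates to a nilpotent $\beta \in \mathfrak g_i^*$, then $\beta \in \overline\cone(\alpha)$. The containment $\mathrm{cone}(\Psi) \subseteq \bigcup_{\alpha \in \Psi}\overline\cone(\alpha)$ is then immediate, and the wave front set bound follows from Theorem \ref{thm:A} combined with the compatibility of the lifting maps $\mathscr L_{x,-r}$ and $\mathscr L^{un}_{x,-r}$ on orbits, the Zariski closure accounting for the passage from rational $G$-orbits in $\mathfrak g^*$ to $G^{un}$-orbits in $\mathfrak g^{un,*}$.

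For the element-wise statement, I would first translate through $\eta_B$. Writing $x := \eta_B(\alpha)$ and $e := \eta_B(\beta) \in \mathfrak g_{-i}$, the hypothesis supplies an $\mathfrak{sl}_2$-triple $(e, h, f)$ with $h \in \mathfrak g_0$, $f \in \mathfrak g_i$, and a decomposition $x = e + \sum_{j \le 0} x_j$, where $x_j$ is the $h$-weight-$j$ component of $x$ in $\mathfrak g_{-i}(j)$. Under the paper's hypotheses on the residue characteristic, I would invoke Jacobson--Morozov together with Kostant's integration of $\mathfrak{sl}_2$-triples to obtain a homomorphism $\mathrm{SL}_2 \to G$ whose derivative realises the triple, and extract from it a cocharacter $\lambda : \mathbb G_m \to G$. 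Since $h \in \mathfrak g_0$, the image of $\lambda$ preserves the $\mathbb Z/n$-grading on $\mathfrak g$, and $\lambda(t)$ acts on $\mathfrak g_{-i}(j)$ by the scalar $t^j$. In particular $\lambda(t).e = t^2 e$ because $e$ has $h$-weight $2$.

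The decisive computation is then immediate:
\begin{equation*}
t^{-2}\, \lambda(t).x \;=\; e \;+\; \sum_{j \le 0} t^{j-2} x_j,
\end{equation*}
and every exponent $j - 2$ is strictly negative. Setting $\mu = t^{-1}$ rewrites this as $e + \sum_{j \le 0} \mu^{2-j} x_j$, whose Zariski limit at $\mu = 0$ is $e$. This exhibits $e$ as a limit of points in $\{\mu^2 g.x : g \in G,\ \mu \in \overline{\mathbb F}_q^\times\}$, and $e$ is nilpotent by construction, so $e \in \overline\cone(x)$. Since $\eta_B^{-1}$ is a linear isomorphism intertwining the coadjoint action on $\mathfrak g_i^*$ with the adjoint action on $\mathfrak g_{-i}$ (by $\ad$-invariance of $B$) and commuting with scalar multiplication, transferring the limit back yields $\beta \in \overline\cone(\alpha)$, as required.

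The only substantive obstacle is verifying that $h$ integrates to a cocharacter of $G$ valued in the grading-preserving subgroup; this is standard under the residue-characteristic and bilinear form hypotheses already built into the paper (Hypotheses \ref{hyp:p} and \ref{hyp:bilinear-form}). Everything else is formal manipulation with weight decompositions and Zariski closures.
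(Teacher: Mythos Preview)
Your core argument is correct and coincides with the paper's: the containment $\cone \subseteq \overline{\cone}$ is exactly the $(\Leftarrow)$ direction of Proposition~\ref{prop:slice}(1), proved by the same cocharacter contraction you describe. One simplification: you need not invoke Jacobson--Morozov or integrate $h$ yourself, since Hypothesis~\ref{hyp:graded}(3) already hands you the adapted cocharacter $\lambda : \mathbb G_m \to G$ with the required weight properties, so the ``only substantive obstacle'' you flag is in fact vacuous here.

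Your treatment of the second part is too breezy in two places. First, Theorem~\ref{thm:A} gives $\WF(\pi)$ in terms of $-\mathscr L_{x,-r}(\cone(\cdots))$, with a minus sign; the paper disposes of this by observing that $-1$ is a square in $\overline{\mathbb F}_q$, so $-\cone(S) \subset \overline{\cone}(S)$ as well. Second, and more substantively, passing from $\mathscr L_{x,-r}(\cone(S))$ to $\mathscr L^{un}_{x,-r}(\overline{\cone}(S))$ requires knowing that the unramified lifting map is order-preserving for the closure order on nilpotent orbits in $\underline{\mathfrak g}^{un}_{x,-r}$; this is Corollary~\ref{cor:order-lift}(2), which in turn rests on the conjugation Lemma~\ref{lem:liftingorder}. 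Your phrase ``compatibility of the lifting maps \ldots\ the Zariski closure accounting for the passage'' does not actually supply this step.
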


Our last theorem gives a closed form for the rational and geometric asymptotic cone of an element in the $0$-graded piece of a $\mathbb Z/n$-graded Lie algebra.
In this setting the grading is superfluous and so we drop it from the statement of the theorem. 
\begin{theorem}[Theorem \ref{thm:ratwfungraded}]
    Let $G$ be a reductive group defined over $\overline{\mathbb F}_q$ with Lie algebra $\mathfrak g$ and a Frobenius endomorphism $\sigma$.
    Write $?^\Fr$ for the $\Fr$-fixed points of $?$.
    
    Let $x\in \mathfrak g$ and let $x = x_s+x_n$ denote its Jordan decomposition.
    Then 
    $$\overline\cone(x) = \overline{\Ind_{L}^G(L.x_n)}$$
    where $L=Z_{G}(x_s)$, $\overline{(-)}$ denotes the Zariski closure, and $\Ind$ refers to Lusztig-Spaltenstein induction.

    Moreover, if $x\in \mathfrak g^\Fr$ then $(\Ind_L^G(L.x_n))^\Fr$ is non-empty and contains degenerations of $x$. 
    In particular
    $$\cone(x) \cap \Ind_{L}^G(L.x_n)\ne\emptyset.$$
\end{theorem}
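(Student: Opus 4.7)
I would establish the equality of closures by proving two inclusions, and then upgrade the first to the rationality statement. Throughout, fix a Jacobson--Morozov $\mathfrak{sl}_2$-triple $(x_n, h, f)$ inside $\mathfrak l$, with associated cocharacter $\phi : \mathbb G_m \to L$ satisfying $\phi(s) \cdot x_n = s^2 x_n$.

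\textbf{The containment $\overline{\Ind_L^G(L.x_n)} \subseteq \overline\cone(x)$.} Since $\phi(s) \in L = Z_G(x_s)$ fixes $x_s$, one has $\lambda^2 \phi(1/\lambda) \cdot x = \lambda^2 x_s + x_n$, placing $x_n \in \overline\cone(x)$ as the $\lambda \to 0$ limit. To promote this to $x_n + \mathfrak u_P$ for a parabolic $P$ with Levi $L$, observe that $Z_G(x) \subseteq L$ forces the adjoint action of $x$ on $\mathfrak u_P$ to be invertible (the semisimple part $\ad(x_s)$ is injective on $\mathfrak u_P$ by definition of $L$, and $\ad(x_n)$ is a commuting nilpotent perturbation), so $U_P \cdot x = x + \mathfrak u_P$. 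Given $v \in \mathfrak u_P$ with $\phi$-weight decomposition $v = \sum_k v_k$, pick $u_\lambda \in U_P$ with $u_\lambda \cdot x - x = \sum_k \lambda^{k-2} v_k$; then $\lambda^2 \phi(1/\lambda) u_\lambda \cdot x = \lambda^2 x_s + x_n + v$, whose $\lambda \to 0$ limit is $x_n + v$. Applying $G$-invariance of $\overline\cone(x)$ and the defining open-dense property of $\Ind_L^G(L.x_n)$ in $G \cdot (x_n + \mathfrak u_P)$ gives the inclusion.

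\textbf{The containment $\overline\cone(x) \subseteq \overline{\Ind_L^G(L.x_n)}$.} Any $z \in \overline\cone(x)$ lies in $\overline{(\mathbb G_m \times G) \cdot x}$, so by Hilbert--Mumford there exist $k \in \mathbb Z$ and a cocharacter $\nu : \mathbb G_m \to G$ with $\lim_{t \to 0} t^{2k} \nu(t) \cdot x$ in the $(\mathbb G_m \times G)$-orbit of $z$; since $\overline{\Ind_L^G(L.x_n)}$ is $G$-invariant and conic, it suffices to handle the limit itself. Simultaneously conjugating $x$ and $\nu$ by an element of $G$, one may arrange $\nu$ to lie in a maximal torus $T \subseteq L$, whence $\nu$ commutes with $x_s$. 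Nilpotency of $z$ then forces $k > 0$, and the limit is the $\nu$-weight $-2k$ component $(x_n)^{(\nu)}_{-2k}$ of $x_n$, which is the limit of $x_n$ under the cocharacter $\nu \in L$ and hence lies in $\overline{L \cdot x_n}$. Writing $\mathcal O_L$ for its $L$-orbit, monotonicity of Lusztig--Spaltenstein induction gives $G \cdot z \subseteq G \cdot (\mathcal O_L + \mathfrak u_P) \subseteq \overline{\Ind_L^G(\mathcal O_L)} \subseteq \overline{\Ind_L^G(L.x_n)}$. \emph{The main obstacle} is this Hilbert--Mumford conjugation step: one must carefully track the interaction between the conjugations of $\nu$ and $x$ so that the target orbit closure is the correct one.

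\textbf{Rationality.} When $x \in \mathfrak g^\Fr$, both $x_s, x_n$, and hence $L$ and $P$, are $\Fr$-stable. Lang's theorem on $Z_L(x_n)^\circ$ yields an $\mathbb F_q$-rational $\mathfrak{sl}_2$-triple for $x_n$ (hence a rational $\phi$), and Lang's theorem on $U_P$ produces an $\mathbb F_q$-point $e = x_n + v$ in the $\Fr$-stable nonempty open set $\Ind_L^G(L.x_n) \cap (x_n + \mathfrak u_P)$. To verify $e \in \cone(x)$ in the sense of the paper, one chooses a suitable $Z_G(e)^\circ$-conjugate of a Jacobson--Morozov triple for $e$ so that $x - e \in \mathfrak g^{h_e \le 0}$; existence follows by rewriting the first-inclusion limit as $\lim_{\lambda \to 0} \lambda^2 \phi_e(1/\lambda) \cdot x = e$, and $\mathbb F_q$-rationality by a final application of Lang's theorem to the $\Fr$-stable variety of eligible triples.
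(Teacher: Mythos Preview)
Your argument for the containment $\overline{\Ind_L^G(L.x_n)}\subseteq\overline\cone(x)$ is correct, and more hands-on than the paper's: in Proposition~\ref{prop:dilatedsuppungraded} the paper instead invokes sheet theory, using that $G\cdot x$ meets the Slodowy slice $\mathfrak s_\phi$ at a representative of $\mathscr N(x)=\Ind_L^G(L.x_n)$ (Theorem~\ref{thm:sheets}(2)), and then applies Proposition~\ref{prop:slice}(1).

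The reverse containment and the rationality statement, however, both have genuine gaps. For $\overline\cone(x)\subseteq\overline{\Ind_L^G(L.x_n)}$, Hilbert--Mumford does not assert that an arbitrary $z\in\overline{\widetilde G\cdot x}$ is reached, up to $\widetilde G$-conjugacy, as a one-parameter limit of $x$; Kempf's form only lands in the unique closed orbit of the closure, here $\{0\}$. Even granting such a $\nu$, your conjugation step fails: simultaneously conjugating by $g$ replaces $L$ with $gLg^{-1}$, so ``arranging $\nu$ into a maximal torus of $L$'' amounts to requiring $\nu(\mathbb G_m)\subseteq L$ before conjugating, which you have no control over. (Already for $x=\mathrm{diag}(1,-1)\in\mathfrak{sl}_2$ one has $L=T$; every $\nu$ with image in $T$ sends $x$ to $0$, and any $\nu$ whose limit on a $G$-conjugate of $x$ is regular nilpotent cannot be brought into the corresponding conjugate of $L$.) The paper sidesteps this via the elementary inclusion $\widetilde G\cdot x\subseteq G\cdot(\mathfrak z(\mathfrak l)+L\cdot x_n)$ together with Borho's lemma $\mathcal N\cap\overline{G\cdot(\mathfrak z(\mathfrak l)+L\cdot x_n)}\subseteq\overline{\mathscr N(x)}$. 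For rationality, the first-inclusion limit uses the cocharacter $\phi$ adapted to $x_n$ together with a \emph{varying} $u_\lambda\in U_P$, and you give no argument that this can be rewritten as $\lim_{\lambda\to 0}\lambda^2\phi_e(1/\lambda)\cdot x=e$ for some $\phi_e$ adapted to $e=x_n+v$; even if that identity held, it would yield only $x\in e+\mathfrak g^{\phi_e}(\le 1)$, not $\le 0$ as the definition of $\cone$ requires. The paper's route is again through Slodowy slices: Lemma~\ref{lem:rational-points} (resting on Tsai's result that $\Theta(x)$ is a single $G_x^\circ$-orbit) produces a rational $\mathfrak{sl}_2$-triple $\phi$ for $\mathscr N(x)$ with $x\in\mathfrak s_\phi\subseteq\Sigma_{\phi(e)}$, whence $\phi(e)\in\cone(x)\cap\mathscr N(x)^\Fr$.
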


Finally, to demonstrate the usage of these results we show in section \ref{sec:adler} that the geometric wave front set of certain toral supercuspidal representations consists of a single nilpotent orbit and we give an explicit formula for it. 

\medskip

\noindent{\bf Acknowledgements.}
We would like to thank Maxim Gurevich, Uri Onn and Lucas Mason-Brown for stimulating discussions about the orbit method. We also thank the referee for the careful reading of the paper and for the helpful comments. This research was partially supported by the EPSRC grant EP/V046713/1 (2021).

\

\noindent {\bf Notation.}
Let $F$ be a finite extension of $\mathbb Q_p$ with valuation $\nu$ normalised so that $\nu(F^\times) = \mathbb Z$, $\bar F$ an algebraic closure for $F$, and $F^{un}$ the maximal unramified extension of $F$ in $\bar F$.
Write $\nu$ for the unique extension of the valuation of $F$ to any algebraic extension of $F$.
Let $\mathbb F_q$ be the residue field for $F$.
The residue field of $F^{un}$ is an algebraic closure for $\mathbb F_q$ and we denote it by $\overline{\mathbb F}_q$.
Write $\mathfrak O_F$ for the ring of integers for $F$, and $\mathfrak p_F$ for the maximal ideal of $\mathfrak O_F$.
Fix a non-trivial character $\chi_0:\mathbb F_p\to \mathbb C^\times$ and let $\chi = \chi_0\circ \mathrm{Tr}_{\mathbb F_q/\mathbb F_p}:\mathbb F_q\to \mathbb C^\times$.
By lifting along the quotient map $\mathfrak O_F\to \mathbb F_q$ we can extend to obtain a character $F\to \mathbb C^\times$ which we will also denote $\chi$, which is trivial on $\mathfrak p_F$.
Denote $\mathbb N_0=\mathbb N\cup \{0\}.$

\section{Preliminaries}
\label{sec:git}
Let $G$ be a connected reductive group defined over $\mathbb F_q$ and let $V = \mathbb A_{\mathbb F_q}^N$ be a $\mathbb F_q$-rational representation of $G$.
Let $\check{V}$ denote the contragredient representation of $V$.
We will identify $G$, $V$ and $\check V$ with their $\overline{\mathbb F}_q$-points and we will denote by $\Fr$ the Frobenius induced by the $\mathbb F_q$-structure.
We will write $?^\Fr$ for the $\Fr$-fixed points of $?$.

\begin{definition}
    $\hphantom{ }$
    \begin{enumerate}
        \item Let $\mathbb G_m$ act on $V$ via $a.v = a^2v$.
        Since $V$ is a linear representation of $G$, the $G$ action commutes with the $\mathbb G_m$ action and so $V$ is naturally a $G \times \mathbb G_m$ representation.
        Write $\tilde {G}$ for $G\times \mathbb G_m$.
        \item An element $v\in V$ is called \emph{nilpotent} if the Zariski closure of $Gv$ contains $0$.
        Write $\mathcal N(V)$ for the set of nilpotent elements of $V$ and call this the nilpotent cone of $V$.
        \item Write $C(V^\Fr)$ for the space of complex-valued functions on $V^\Fr$, and $C(V^\Fr/G^\Fr)$ for the subspace of $C(V^\Fr)$ consisting of $G^\Fr$-invariant functions.
        \item Let $C(V^\Fr,+)$ denote the set of functions $f\in C(V^\Fr)$ that arise as characters of representations of $V^\Fr$ (viewed as an additive group), and let $C(V^\Fr/G^\Fr,+)$ denote the subset of $C(V^\Fr,+)$ consisting of $G^\Fr$-invariant functions.
        \item For a function $f\in C(V^\Fr)$ define
        $$\mathrm{FT}(f):\check V^\Fr \to \mathbb C, \quad \alpha \mapsto \frac{1}{q^{N/2}}\sum_{v\in V^\Fr}\chi(\alpha(v))f(v).$$
        Similarly, define the Fourier transform for functions on $\check V^\Fr$.
        \item For functions $f,g\in C(V)$ define
        $$\langle f,g\rangle_{V} := \frac{1}{q^{N}}\sum_{v\in V}\overline{f(v)}g(v)$$
        where $\overline{z}$ denotes the complex conjugate of $z\in \mathbb C$. Similarly, define an inner product on complex-valued functions on $\check V$.
        \item Identify the Pontryagin dual of $V^\Fr$ with $\check V^\Fr$ by identifying $\alpha \in \check V^\Fr$ with the character $\chi \circ \alpha:V^\Fr\to \mathbb C^\times$.
        For $\psi:V^\Fr\to \mathbb C^\times$ an additive character, let $\alpha_\psi$ denote the element of $\check V^\Fr$ corresponding to $\psi$ so $\psi = \chi \circ \alpha_\psi$.
    \end{enumerate}
\end{definition}
The Fourier transform satisfies the following basic identities:
$$\langle \FT(f),\FT(g)\rangle_{\check V} = \langle f, g\rangle_{V}, \quad \FT(\FT(f))(v) = f(-v).$$
%


The following lemma gives a characterisation of additive characters in terms of the Fourier transform.
The proof is straightforward and omitted.
\begin{lemma}
    \label{lem:coadjcharexp}
    Let $f\in C(V^\Fr)$. 
    \begin{enumerate}
        \item $f\in C(V^\Fr,+)$ if and only if $\FT(f)$ takes values in $q^{N/2}\mathbb N_{0}$.
        \item $f\in C(V^\Fr/G^\Fr,+)$ if and only if $f = \FT(h)$ where $h = \sum_{\mathcal O^*\subseteq \check V^\Fr/G^\Fr}c_{\mathcal O^*}(f)1_{\mathcal O^*}$ for some $c_{\mathcal O^*}(f)\in q^{N/2}\mathbb N_0$.
    \end{enumerate}
\end{lemma}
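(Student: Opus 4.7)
The plan is to reduce both parts to standard Pontryagin duality on the finite abelian group $V^{\Fr}$, via the identification of its dual with $\check V^{\Fr}$ provided in the statement.

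For (1), I would first note that every finite-dimensional representation of the finite abelian group $V^{\Fr}$ decomposes into one-dimensional summands, so $f \in C(V^{\Fr},+)$ if and only if $f = \sum_{\beta \in \check V^{\Fr}} n_\beta\, \chi \circ \beta$ with multiplicities $n_\beta \in \mathbb N_0$. A direct computation using the orthogonality relation $\sum_{v \in V^{\Fr}}\chi(\gamma(v)) = q^N \delta_{\gamma,0}$ then yields
$$\FT(f)(\alpha) = q^{N/2}\, n_{-\alpha},$$
giving the forward direction immediately. For the converse, Fourier inversion $\FT(\FT(f))(v) = f(-v)$ exhibits $f(v) = q^{-N/2}\sum_\beta \FT(f)(-\beta)\,\chi(\beta(v))$, so whenever $\FT(f)$ takes values in $q^{N/2}\mathbb N_0$ each coefficient $q^{-N/2}\FT(f)(-\beta)$ is a non-negative integer and $f$ realises the character of the corresponding representation of $V^{\Fr}$.

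For (2), I would combine part (1) with the observation that $\FT$ intertwines the $G^{\Fr}$-actions on $V^{\Fr}$ and $\check V^{\Fr}$, so $f$ is $G^{\Fr}$-invariant precisely when $\FT(f)$, equivalently the multiplicity function $\beta\mapsto n_\beta$, is constant on $G^{\Fr}$-orbits. Setting $h(\alpha):=\FT(f)(-\alpha)$, this gives $h = \sum_{\mathcal O^*} c_{\mathcal O^*}(f)\, 1_{\mathcal O^*}$ with $c_{\mathcal O^*}(f)\in q^{N/2}\mathbb N_0$, and Fourier inversion identifies $f$ with $\FT(h)$. The reverse implication is a direct check: any $h$ of the stated form produces $f = \FT(h)$ whose own Fourier transform $\alpha\mapsto h(-\alpha)$ again takes values in $q^{N/2}\mathbb N_0$, so by (1) it lies in $C(V^{\Fr},+)$, and $f$ inherits $G^{\Fr}$-invariance from each $1_{\mathcal O^*}$ once one notes that $-\mathcal O^*$ is again a $G^{\Fr}$-orbit by linearity of the action.

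There is no substantive obstacle here; the content is essentially finite Fourier analysis on the abelian group $V^{\Fr}$ together with the compatibility between the $G^{\Fr}$-action and $\FT$. The only points requiring care are bookkeeping the sign flip $\alpha\leftrightarrow -\alpha$ arising from $\FT\circ\FT=$ pullback by $-1$, and keeping the normalisation factors $q^{\pm N/2}$ aligned so that the indexing $\mathcal O^*\mapsto c_{\mathcal O^*}(f)$ in part (2) lands precisely in $q^{N/2}\mathbb N_0$.
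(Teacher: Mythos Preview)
Your proposal is correct and supplies exactly the straightforward finite Fourier analysis the paper has in mind; the paper itself omits the proof entirely, stating only that it is straightforward. There is therefore nothing substantive to compare.
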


We call a $G^\Fr$-invariant character reducible if it can be written as the sum of two non-zero $G^\Fr$-invariant characters and irreducible otherwise.
Let $\Pi(V^\Fr/G^\Fr,+)$ denote the set of irreducible $G^\Fr$-invariant characters of $V^\Fr$.
\begin{lemma}
    \label{lem:irrchar}
    For a orbit $\mathcal O^*\in \check V^\Fr/G^\Fr$ let $\chi_{\mathcal O^*}$ denote the $G$-invariant representation $\FT(q^{N/2}1_{-\mathcal O^*})$.
    The map 
    $$\check V/G \to \Pi(V/G,+), \quad \mathcal O^*\mapsto \chi_{\mathcal O^*}$$
    is a bijection and satisfies
    \begin{enumerate}
        \item $\FT(\chi_{\mathcal O^*}) = q^{N/2}1_{\mathcal O^*}$,
        \item $\overline{\chi_{\mathcal O^*}} = \chi_{-\mathcal O^*}$.
    \end{enumerate}
\end{lemma}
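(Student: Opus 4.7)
The plan is to read everything off from Lemma \ref{lem:coadjcharexp}; both properties and the bijection reduce to Fourier inversion plus the non-negativity statement in part (2) of that lemma.

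First I would verify properties (1) and (2) by direct calculation. Property (1) is Fourier inversion: applying $\FT$ to the definition $\chi_{\mathcal O^*} = \FT(q^{N/2}1_{-\mathcal O^*})$ and using the identity $\FT\circ\FT(f)(v) = f(-v)$ from the preliminaries yields $\FT(\chi_{\mathcal O^*})(\alpha) = q^{N/2}1_{-\mathcal O^*}(-\alpha) = q^{N/2}1_{\mathcal O^*}(\alpha)$. For property (2) I would unfold the Fourier transform to write $\chi_{\mathcal O^*}(v) = \sum_{\alpha \in -\mathcal O^*}\chi(\alpha(v))$, then take complex conjugates using $\overline{\chi(t)} = \chi(-t)$ and reindex via $\alpha \mapsto -\alpha$ to arrive at $\chi_{-\mathcal O^*}(v)$.

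Next I would show $\chi_{\mathcal O^*} \in \Pi(V^\Fr/G^\Fr,+)$. That $\chi_{\mathcal O^*}$ is a $G^\Fr$-invariant character is immediate from Lemma \ref{lem:coadjcharexp}(2) applied with $h = q^{N/2}1_{-\mathcal O^*}$, since the sole nonzero coefficient lies in $q^{N/2}\mathbb N_0$. For irreducibility, suppose $\chi_{\mathcal O^*} = f_1 + f_2$ with $f_1,f_2 \in C(V^\Fr/G^\Fr,+)$ both nonzero. Applying $\FT$ to both sides and invoking Lemma \ref{lem:coadjcharexp}(2) for each $f_i$ expresses $q^{N/2}1_{\mathcal O^*}$ as $\sum_{\mathcal O'^*}\bigl(c^{(1)}_{\mathcal O'^*} + c^{(2)}_{\mathcal O'^*}\bigr)1_{\mathcal O'^*}$ with each summand in $q^{N/2}\mathbb N_0$. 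Comparing coefficients orbit by orbit forces one of the $f_i$ to vanish, a contradiction.

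For the bijection, injectivity is immediate from property (1), since the indicator functions of distinct $G^\Fr$-orbits in $\check V^\Fr$ are linearly independent. For surjectivity, take any irreducible $f \in \Pi(V^\Fr/G^\Fr,+)$ and use Lemma \ref{lem:coadjcharexp}(2) to write $\FT(f) = \sum c_{\mathcal O^*} 1_{\mathcal O^*}$ with $c_{\mathcal O^*} \in q^{N/2}\mathbb N_0$. Inverting gives $f = \sum (c_{\mathcal O^*}/q^{N/2})\chi_{-\mathcal O^*}$, a non-negative integer combination of characters (by the previous paragraph); irreducibility collapses the sum to a single term with coefficient $1$, so $f = \chi_{-\mathcal O^*}$ for a unique orbit. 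There is no real obstacle: the statement is essentially a repackaging of Lemma \ref{lem:coadjcharexp} in the language of irreducible $G^\Fr$-invariant characters. The only point requiring mild care is to order the arguments so that $\chi_{\mathcal O^*}$ is shown to be a character before this is used in the irreducibility argument, avoiding circularity.
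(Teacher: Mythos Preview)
Your proposal is correct and follows essentially the same approach as the paper: the paper's proof simply reads ``The map is a bijection by Lemma~\ref{lem:coadjcharexp}.(2). The identities (1),(2) are straightforward computations.'' You have merely unpacked what this one-line appeal to Lemma~\ref{lem:coadjcharexp}(2) entails (that each $\chi_{\mathcal O^*}$ is a $G^\Fr$-invariant character, that it is irreducible, and that every irreducible arises this way), and carried out the ``straightforward computations'' for (1) and (2) explicitly via Fourier inversion and conjugation of $\chi$.
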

\begin{proof}
    The map is a bijection by lemma \ref{lem:coadjcharexp}.(2).
    The identities $(1),(2)$ are straightforward computations.
\end{proof}

\begin{definition}
    Let $S\subset V$.
    Define the \textit{geometric asymptotic cone} of $S$ to be
    $$\overline{\cone}(S):=\mathcal N(V) \cap \overline{\tilde G.S} \quad \subset \mathcal N(V)$$
    where the closure is taken with respect to the Zariski topology.
\end{definition}
\section{The wave front set of characters in $\mathbb R/\mathbb Z$-graded Lie algebras}
\label{sec:vinburglevy}
Let $\mathfrak g$ be a Lie algebra defined over $\overline{\mathbb F}_q$ with a compatible $\mathbb R/\mathbb Z$-grading 
$$\mathfrak g = \bigoplus_{r\in \mathbb R/\mathbb Z}\mathfrak g_r$$
i.e. $[\mathfrak g_r,\mathfrak g_s]\subset \mathfrak g_{r+s}$ where addition is performed in $\mathbb R/\mathbb Z$.
Such gradings include $\mathbb Z/n$-gradings via the embedding $\mathbb Z/n\to \mathbb R/\mathbb Z, 1\mapsto 1/n$, and $\mathbb Z$-gradings via $\mathbb Z\to \mathbb R/\mathbb Z, n\mapsto \alpha n$ where $\alpha$ is irrational.

%
%
Let $p$ be the characteristic of $\mathbb F_q$.

In this section we will assume that $\mathfrak g$ satisfies the following hypotheses.
\begin{hypothesis}\label{hyp:graded}
    $\hphantom{ }$
    \begin{enumerate}
        \item $\mathfrak g_0$ is the Lie algebra of a reductive group $G$ and each $\mathfrak g_r$ carries an action of $G$ compatible with the adjoint action of $\mathfrak g_0$;
        \item $G$ and $\mathfrak g$ are equipped with compatible (and grading preserving) Frobenius endomorphisms $\Fr$;
        \item every nilpotent element $e\in \mathfrak g_r$ can be completed to an $\mathfrak {sl}_2$-triple 
        $$e\in \mathfrak g_r, \quad h\in \mathfrak g_0,\quad f\in \mathfrak g_{-r}.$$
        Moreover there exists a cocharacter $\lambda:\mathbb G_m\to G$, unique up to multiplication by a cocharacter of the center of $G$ with $0$ differential, such that the image of $d\lambda$ is equal to the space spanned by $h$, and
        $$\text{ if } \Ad(\lambda(t))v = t^iv, \text{ then } |i|\le p-3 \text{ and } \ad(h) v = iv.$$
        If $e,h,f\in \mathfrak g^\Fr$ then $\lambda$ can be chosen (and so we will henceforth always assume) to commute with $\Fr$.
        \item $\mathfrak g$ is equipped with a $G$-invariant, non-degenerate, symmetric bilinear form 
        $$B:\mathfrak g\times\mathfrak g \to \overline{\mathbb F}_q$$
        compatible with $\Fr$, and satisfying 
        $$B(\mathfrak g_r,\mathfrak g_{s})\ne 0 \text{ only if } r+s = 0.$$
    \end{enumerate}
\end{hypothesis}
\begin{definition}\label{def:graded}
    $\hphantom{ }$
    \begin{enumerate}
        \item We call a $\lambda$ satisfying hypothesis \ref{hyp:graded}.(3) an adapted cocharacter for the $\mathfrak {sl}_2$-triple $(e,h,f)$.
        \item The bilinear form $B$ restricts to a non-degenerate bilinear form $B:\mathfrak g_{-r}\times \mathfrak g_r\to \overline{\mathbb F}_q$ and hence yields an isomorphism $\eta_B:\mathfrak g_r^*\to \mathfrak g_{-r}$.
        Since $B$ is compatible with $\Fr$, $\eta_B$ restricts to an isomorphism $(\mathfrak g_r^*)^\Fr\to \mathfrak g_{-r}^\Fr$.
        \item Since $\ad(e)^p=0$ for all $e\in \mathcal N(\mathfrak g_0)$, the exponential map $\exp:\mathcal N(\mathfrak g_0)\to G$ is defined on the nilpotent elements of $\mathfrak g_0$.
        \item Given an $\mathfrak {sl}_2$-triple $e\in \mathfrak g_r,h\in \mathfrak g_0,f\in \mathfrak g_{-r}$ and an adapted cocharacter $\lambda$, define
        $$\mathfrak g^\lambda_{r}(j) := \{x\in\mathfrak g_r:\Ad(\lambda(t)).x = t^jx, t\in \mathbb G_m\}.$$
        This grading only depends on the $\mathfrak {sl}_2$-triple since conjugating by a central cocharacter does nothing and so when there is no cause for confusion we will drop the $\lambda$ superscript.
    \end{enumerate}
\end{definition}



%
%

\begin{lemma} \label{lem:graded-sl2}
    Let $e\in \mathcal N(\mathfrak g_r)$ and let $\Theta_r(e)$ denote the set of all $\mathfrak sl_2$-triples $(e,h,f)$ such that 
    $$e\in \mathfrak g_r, h\in \mathfrak g_0,f\in \mathfrak g_{-r}.$$
    %
    
    Then 
    \begin{enumerate}
        \item there exists a connected unipotent subgroup $U$ of $G_e$ that acts simply transitively on $\Theta_i(e)$.\label{en:graded-sl2-1}
    \end{enumerate}
    If additionally $e\in \mathfrak g_i^\Fr$ then
    \begin{enumerate}
        \item[(2)] $\Theta_i(e)^\Fr$ is non-empty \label{en:graded-sl2-2}
        \item[(3)] $U^\Fr$ acts simply transitively on $\Theta_i(e)^\Fr$.\label{en:graded-sl2-3}
    \end{enumerate}
\end{lemma}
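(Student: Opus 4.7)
The plan is to adapt Kostant's classical Jacobson--Morozov argument to the graded setting. First, fix $(e,h,f) \in \Theta_r(e)$ together with an adapted cocharacter $\lambda$ (from hypothesis \ref{hyp:graded}.(3)), and refine the $\mathbb R/\mathbb Z$-grading by $\lambda$-weight, writing $\mathfrak g_s = \bigoplus_{i \in \mathbb Z}\mathfrak g_s(i)$. Since $\lambda \in G$ and $G$ preserves the $\mathbb R/\mathbb Z$-grading, the two gradings commute, and standard $\mathfrak{sl}_2$-representation theory (available under the bound in hypothesis \ref{hyp:graded}.(3)) forces $\mathfrak g_e \subseteq \bigoplus_{i \geq 0}\mathfrak g(i)$. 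Define
$$\mathfrak u := \bigoplus_{i \geq 1}\bigl(\mathfrak g_e(i) \cap \mathfrak g_0\bigr),$$
a nilpotent Lie subalgebra of $\mathfrak g_0$ consisting of $\ad$-nilpotent elements, so that $U := \exp(\mathfrak u)$ is a connected unipotent subgroup of $G$, contained in $G_e$ and stabilising $\Theta_r(e)$ (as $U \subset G$ respects the $\mathbb R/\mathbb Z$-grading).

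Freeness of the $U$-action on $\Theta_r(e)$ follows from a lowest-weight argument: if $y = \sum_{i \geq 1}y_i \in \mathfrak u$ satisfies $\Ad(\exp y)h = h$, the lowest-weight component of $\Ad(\exp y)h - h$ is $[y_{i_0},h] = -i_0 y_{i_0}$, forcing $y_{i_0} = 0$ and hence $y = 0$. Transitivity is the main technical step. Given a second triple $(e,h',f') \in \Theta_r(e)$, the plan is to build $y \in \mathfrak u$ iteratively by $\lambda$-weight: writing $[\cdot]_i$ for the $\lambda$-weight $i$ part and supposing $h^{(i-1)} \equiv h' \pmod{\lambda\text{-weight} \geq i}$, set $y_i := \tfrac{1}{i}\,[h^{(i-1)} - h']_i$, so that $\Ad(\exp y_i)h^{(i-1)} \equiv h' \pmod{\lambda\text{-weight} \geq i+1}$. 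The two key checks are (a) that $y_i \in \mathfrak g_e(i) \cap \mathfrak g_0$: the $\mathfrak g_0$-containment is immediate, and the $\mathfrak g_e$-containment follows from $[h^{(i-1)} - h',e] = 0$ (both $h^{(i-1)}$ and $h'$ bracket $e$ to $2e$); (b) that the iteration terminates, which is guaranteed by the weight bound $|i| \leq p - 3$. The $f$-component falls out automatically since $f$ is determined by $(e,h)$ via $\mathfrak{sl}_2$-theory.

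Parts (2) and (3) are Lang--Steinberg consequences of (1). For (2), start from any $(e, h, f) \in \Theta_r(e)$; since $\Fr$ fixes $e$ and preserves the grading, $\Fr(e,h,f) = (e,\Fr h,\Fr f) \in \Theta_r(e)$, so by (1) there is a unique $u \in U$ with $u \cdot (e,h,f) = \Fr(e,h,f)$. Lang--Steinberg on the connected $\Fr$-stable unipotent $U$ produces $v \in U$ with $v^{-1}\Fr(v) = u^{-1}$, after which $v \cdot (e,h,f) \in \Theta_r(e)^\Fr$. For (3), given $\Fr$-fixed triples $\tau,\tau' \in \Theta_r(e)^\Fr$, the unique $u \in U$ with $u \cdot \tau = \tau'$ also satisfies $\Fr(u) \cdot \tau = \tau'$ after applying $\Fr$; the uniqueness in (1) then gives $\Fr(u) = u$, so $u \in U^\Fr$. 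The main obstacle throughout is the transitivity in (1): the iterative construction must land in $\mathfrak u \subset \mathfrak g_0$, which rests on the compatibility of the $\lambda$-grading with the $\mathbb R/\mathbb Z$-grading, as ensured by $\lambda \in G$.
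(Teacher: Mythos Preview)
Your approach is essentially the paper's: the unipotent group is the same (your $\mathfrak u=\bigoplus_{i\ge 1}(\mathfrak g_e(i)\cap\mathfrak g_0)$ coincides with the paper's $\mathfrak g_0^e\cap[\mathfrak g_{-r},e]$, as one checks from $\mathfrak{sl}_2$-theory), and parts (2)--(3) use the same Lang--Steinberg/uniqueness arguments. Where the paper simply cites Collingwood--McGovern for simple transitivity, you spell out the Kostant-style weight induction.

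Two points need tightening. First, the base case of your transitivity induction is not established: to start at $i=1$ with $h^{(0)}=h$ you need $[h-h']_0=0$, and this does \emph{not} follow from $[h-h',e]=0$ alone, which only gives $h-h'\in\bigoplus_{i\ge 0}\mathfrak g_e(i)$. The missing observation is $h-h'=[e,f-f']\in[\mathfrak g_{-r},e]$, so that $h-h'\in\mathfrak g^e\cap[\mathfrak g,e]=\bigoplus_{i\ge 1}\mathfrak g^e(i)$ and the weight-$0$ component vanishes. Second, in (2) you assert that $U$ is $\Fr$-stable, but your definition of $\mathfrak u$ is via the $\lambda$-weight decomposition, and at that stage only $e$ is known to be $\Fr$-fixed; hypothesis~\ref{hyp:graded}(3) lets you pick a $\Fr$-equivariant $\lambda$ only once you already have a rational triple, which is precisely what you are trying to produce. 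The paper's description $\mathfrak u=\mathfrak g_0^e\cap[\mathfrak g_{-r},e]$ is visibly independent of $\lambda$ and hence manifestly $\Fr$-stable when $e\in\mathfrak g_r^\Fr$; you should either recast $\mathfrak u$ in that form or argue directly that $\bigoplus_{i\ge 1}\mathfrak g^e(i)$ is intrinsic to $e$ (e.g.\ as $\mathfrak g^e\cap[\mathfrak g,e]$, or as the nilradical of $\mathfrak g^e$).
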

\begin{proof}
    $(1)$ By hypothesis \ref{hyp:graded}, $\Theta_i(e)$ non-empty. 
    Let $(e,h,f)\in \Theta_i(e)$ and $\lambda$ be an adapted cocharacter for this $\mathfrak {sl}_2$-triple.
    
    The cocharacter $\lambda$ induces a bi-grading 
    $$\mathfrak g = \bigoplus_{r\in \mathbb R/\mathbb Z,j\in \mathbb Z}\mathfrak g_r(j).$$
    Let 
    $$\mathfrak u_0^e = \mathfrak g_0^e \cap [\mathfrak g_{-r},e].$$
    It is contained in $\oplus_{j>0}\mathfrak g_0(j)$ and so is a nilpotent subalgebra of $\mathfrak g_0$.
    In particular the exponential map is defined on $\mathfrak u^e_0$ and so the proof of \cite[Lemma 3.4.7]{CM} yields that 
    $$U:=\exp(\mathfrak u^e_0) \quad \subset Z_G(e)$$
    acts simply transitively on $H+\mathfrak u_0^e$.

    The proof of \cite[Theorem 3.4.10]{CM} then gives that $U$ acts simply transitively on $\Theta_i(e)$.
    The group $U$ is connected because it is the image of the exponential map of a Lie subalgebra.
    
    $(2)$ Since $e\in \mathfrak g_i^\Fr$, it follows that $\mathfrak u_0^e$ and hence $U$ is $\Fr$-stable.
    The existence of a $\Fr$-fixed point then follows from Lang's theorem.

    $(3)$ Suppose $(e,h,f),(e,h',f')\in \Theta_i(e)^\Fr$.
    Then there exists a $u\in U$ such that 
    $$\Ad(u).(e,h,f) = (e,h',f').$$
    Applying $\Fr$ to both sides we get that
    $$\Ad(\Fr(u)).(e,h,f) = (e,h',f').$$
    By simple transitivity it follows that $\Fr(u) = u$ as required.
\end{proof}
\begin{definition}
    We say an element $x\in \mathfrak g_r^\Fr$ \emph{degenerates} to a nilpotent element $e\in\mathfrak g_r^\Fr$ if $e$ can be completed to an $\mathfrak {sl}_2$-triple $e\in \mathfrak g_r^\Fr, h\in \mathfrak g_0^\Fr, f\in \mathfrak g_{-r}^\Fr$ with adapted cocharacter $\lambda$ such that 
    $$x\in e+\bigoplus_{j\le0}\mathfrak g_r(j).$$

    For a subset $S\subset \mathfrak g_r^\Fr$ we define the \emph{rational asymptotic cone} of $S$ to be the set of nilpotent elements in $\mathfrak g_r$ that are degenerated from an element in $S$.
    We denote the rational asymptotic cone of $S$ by $\cone(S)$.
\end{definition}

\subsection{Graded generalised Gelfand-Graev representations}
\label{sec:gggg}
In this section we define graded generalised Gelfand-Graev representations.
These are the $\mathbb R/\mathbb Z$-graded analogues of the generalised Gelfand-Graev characters studied by Kawanaka.

\begin{definition}
    \label{def:gggg}
    Let $r\in \mathbb R/\mathbb Z$ and $\alpha \in \mathcal N((\mathfrak g_r^{*})^\Fr)$.
    By lemma \ref{lem:graded-sl2}.(2) we can complete $\eta_B(\alpha)\in \mathfrak g_{-r}^\Fr$ to an $\mathfrak {sl}_2$-triple, 
    $$\eta_B(\alpha)\in \mathfrak g_{-r}^\Fr, \quad h\in \mathfrak g_0^\Fr, \quad f \in \mathfrak g_r^\Fr$$
    and associate to this triple an adapted cocharacter $\lambda$ that commutes with $\Fr$.
    The action of $\lambda$ induces a bi-grading 
    $$\mathfrak g = \bigoplus_{r\in \mathbb R/\mathbb Z,j\in \mathbb Z}\mathfrak g_{r}(j).$$
    Write $\mathfrak g_r(\ge j)$ for $\bigoplus_{k\ge j}\mathfrak g_r(k)$.
    Define $\Gamma_{\alpha}$ to be the function on $\mathfrak g_r^\Fr$ given by
    \begin{equation}\label{e:Gamma-alpha}
    \Gamma_{\alpha}(x) := q^{N_r}\sum_{g\in G^\Fr,\Ad(g)(x)\in \mathfrak g_r(\le -1)}\chi(-\alpha(\Ad(g)(x)))
    \end{equation}
    where $N_r = \dim_{\mathbb F_q}\mathfrak g_r^\Fr$.
    We also introduce the notation 
    $$\Sigma_\alpha = \alpha + \eta_B^{-1}(\mathfrak g_{-r}(\le 0)).$$
    It depends on the choice of $\mathbb F_q$-rational $\mathfrak {sl}_2$, but since all such triples are conjugate by an element in $G^\Fr$ we supress this from the notation without harm.
    Similarly we write $\Sigma_{G^\Fr.\alpha}$ for $\Sigma_{\alpha}$ without harm.
\end{definition}
\begin{proposition}\label{prop:gggg}
    Let $\alpha\in \mathcal N((\mathfrak g_r^*)^\Fr)$.
    Then
    \begin{enumerate}
        \item $\Gamma_\alpha$ only depends on the $G^\Fr$-orbit of $\alpha$;
        \item $\supp(\Gamma_\alpha)\subset \mathcal N(\mathfrak g_r)$;
        \item $\Gamma_\alpha\in C(\mathfrak g_r^\Fr/G^\Fr,+)$;
        \item $\langle\chi_{\mathcal O'^*},\Gamma_{\alpha}\rangle \ne 0$ iff $\mathcal O'^*\cap \Sigma_{\alpha}\ne \emptyset$.
    \end{enumerate}
\end{proposition}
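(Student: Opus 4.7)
The plan is to reduce everything to a single Fourier transform computation for $\Gamma_\alpha$ and read off each claim. First I would verify well-definedness and (1): two $\mathfrak{sl}_2$-triples with the same $\eta_B(\alpha)$ differ by $\Ad(u)$ for some $u\in U^\Fr\subset Z_G(\eta_B(\alpha))^\Fr$ (Lemma \ref{lem:graded-sl2}.(3)); the associated bigradings of $\mathfrak g_r$ are conjugate by $\Ad(u)$, and $G$-invariance of $B$ gives $\Ad^*(u)\alpha=\alpha$, so the substitution $g\to gu$ in (\ref{e:Gamma-alpha}) recovers the same function. Similarly, replacing $\alpha$ by $\Ad^*(g_0)\alpha$ for $g_0\in G^\Fr$ shifts the triple, bigrading, and coadjoint datum all by $\Ad(g_0)$, and the substitution $g\to g_0g$ yields (1). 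Part (2) is immediate: $\Gamma_\alpha(x)\ne 0$ forces $x$ to be $G^\Fr$-conjugate to an element of $\mathfrak g_r(\le-1)^\Fr$, and any such element is nilpotent in $\mathfrak g_r$ because $\Ad(\lambda(t))$ sends it to $0$ as $t\to\infty$.

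The heart of the proof is the Fourier transform. Substituting $y=\Ad(g)x$ in (\ref{e:Gamma-alpha}) I expect to obtain
\[
\FT(\Gamma_\alpha)(\beta)=q^{N_r/2}\sum_{g\in G^\Fr}\sum_{y\in\mathfrak g_r(\le-1)^\Fr}\chi\bigl((\Ad^*(g)\beta-\alpha)(y)\bigr),
\]
whose inner sum equals $|\mathfrak g_r(\le-1)^\Fr|$ when $\Ad^*(g)\beta-\alpha$ annihilates $\mathfrak g_r(\le-1)$ and vanishes otherwise. The key identification is that, since $G$-invariance of $B$ together with the weight decomposition under $\Ad(\lambda(t))$ forces $B(\mathfrak g_r(j),\mathfrak g_{-r}(k))=0$ unless $j+k=0$, the $\eta_B$-preimage of the annihilator of $\mathfrak g_r(\le-1)$ is exactly $\eta_B^{-1}(\mathfrak g_{-r}(\le 0))$, so the vanishing condition reduces to $\Ad^*(g)\beta\in\Sigma_\alpha$. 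Consequently,
\[
\FT(\Gamma_\alpha)(\beta)=q^{N_r/2}\,|\mathfrak g_r(\le-1)^\Fr|\cdot\bigl|\{g\in G^\Fr:\Ad^*(g)\beta\in\Sigma_\alpha\}\bigr|\in q^{N_r/2}\mathbb N_0,
\]
and Lemma \ref{lem:coadjcharexp}.(1) together with the evident $G^\Fr$-invariance of (\ref{e:Gamma-alpha}) gives (3).

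For (4), Parseval and the identity $\FT(\chi_{\mathcal O'^*})=q^{N_r/2}1_{\mathcal O'^*}$ from Lemma \ref{lem:irrchar}.(1) should yield
\[
\langle\chi_{\mathcal O'^*},\Gamma_\alpha\rangle=|\mathfrak g_r(\le-1)^\Fr|\sum_{\beta\in\mathcal O'^*}\bigl|\{g\in G^\Fr:\Ad^*(g)\beta\in\Sigma_\alpha\}\bigr|,
\]
a sum of non-negative integers non-vanishing precisely when $\mathcal O'^*\cap\Sigma_\alpha\ne\emptyset$. The main obstacle is the annihilator identification in the middle paragraph: one has to combine $G$-invariance of $B$, the grading compatibility in Hypothesis \ref{hyp:graded}.(4), and the weight decomposition under $\Ad(\lambda(t))$ to pin down the $B$-perpendicular of $\mathfrak g_r(\le-1)$ as $\mathfrak g_{-r}(\le 0)$. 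Everything else is a change of variables or an invocation of the characterisations of additive characters already at hand.
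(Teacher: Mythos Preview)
Your proposal is correct and follows essentially the same route as the paper: both arguments hinge on the explicit computation of $\FT(\Gamma_\alpha)$ via the substitution $y=\Ad(g)x$, the identification of the annihilator of $\mathfrak g_r(\le -1)$ with $\eta_B^{-1}(\mathfrak g_{-r}(\le 0))$ using the weight compatibility of $B$, and then reading off (3) from Lemma~\ref{lem:coadjcharexp} and (4) from the resulting formula (the paper's equation~\eqref{eq:ftgggg}). Your treatments of (1) and (2) are slightly more explicit than the paper's but use the same ingredients (Lemma~\ref{lem:graded-sl2} for the triple, and contraction under $\lambda$ for nilpotence).
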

\begin{proof}
    $(1)$ First note from lemma \ref{lem:graded-sl2} that the definition of $\Gamma_{\alpha}$ does not depend on the choice of graded $\mathfrak {sl}_2$-triple.
    It is then clear that the definition then only depends on the $G^\Fr$-orbit of $\alpha$.

    $(2)$ This follows from the fact that $\mathfrak g(\le -1)$ is a nilpotent Lie subalgebra and hence consists of nilpotent elements.

    $(3)$ It is clearly $G^\Fr$-equivariant.
    Moreover we have that
    \begin{align*}
        q^{-N_i/2}\cdot\FT(\Gamma_{\alpha})(\beta) &= \sum_{x\in \mathfrak g_r^\Fr, g\in G^\Fr,\Ad(g)(x)\in \mathfrak g_r(\le -1)}\chi(\beta(x))\chi(-\alpha(\Ad(g)(x))) \\
        &= \sum_{x\in \mathfrak g_r^\Fr, g\in G^\Fr,\Ad(g)(x)\in \mathfrak g_r(\le -1)}\chi((\Ad^*(g)\beta-\alpha)(\Ad(g)x)) \\
        &= \sum_{x\in \mathfrak g_r(\le -1)^\Fr, g\in G^\Fr}\chi((\Ad^*(g)\beta-\alpha)(x)) \\
        &= \sum_{x\in \mathfrak g_r(\le -1)^\Fr, g\in G^\Fr}\psi_{g,\alpha,\beta}(x)
    \end{align*}
    where $\psi_{g,\alpha,\beta} = \chi\circ (\Ad^*(g)\beta-\alpha)$. 
    It is clear this is a character of $\mathfrak g_r(\le -1)^\Fr$ and so the sum $\sum_{x\in \mathfrak g_r(\le -1)^\Fr}\psi_{g,\alpha,\beta}(x)$ is $0$ unless $\Ad^*(g)(\beta)-\alpha$ is $0$ on $\mathfrak g_r(\le -1)^\Fr$.
    The bilinear form $B$ restricts to a perfect pairing on $\mathfrak g_{-r}(\le -1)^\Fr\times \mathfrak g_{r}(\ge 1)^\Fr$ and so $\Ad^*(g)(\beta)-\alpha$ is $0$ on $\mathfrak g_r(\le -1)^\Fr$ if and only if $\Ad(g)(\eta_B(\beta))-\eta_B(\alpha)\in \mathfrak g_{-r}(\le 0)^\Fr$.
    Therefore
    \begin{align}
        \label{eq:ftgggg}
        \FT(\Gamma_{\alpha})(\beta) &= q^{N_i/2}\cdot \#\mathfrak g_r(\le -1)^\Fr \cdot \#\{g\in G^\Fr : \Ad(g)(\eta_B(\beta)) \in \eta_B(\alpha) +  \mathfrak g_{-r}(\le 0)\}.
    \end{align}
    Since $\FT(\Gamma_{\alpha})$ takes values in $q^{N_i/2}\mathbb N_{0}$, $\Gamma_{\alpha}$ is a $G$-equivariant character of $\mathfrak g_r^\Fr$ by lemma \ref{lem:coadjcharexp}.

    $(4)$ This follows immediately from equation \ref{eq:ftgggg}.
\end{proof}
\begin{definition}\label{d:Gamma-O} In light of part (1) of this proposition we write $\Gamma_{\mathcal O^*}$ for $\Gamma_{\alpha}$ in (\ref{e:Gamma-alpha}), where $\alpha$ is any element of $\mathcal O^*$ and $\mathcal O^*\in (\mathfrak g_r^*)^\Fr/G^\Fr$.
\end{definition}

\subsection{The wave front set}

\begin{definition}
    For a function $f$ on $\mathfrak g_r^\Fr$, define the \emph{wave front set} of $f$ to be
    $$\overline{\WF}(f) = \{\mathcal O^*\in (\mathfrak g_r^*)^\Fr/G^\Fr: \langle f,\Gamma_{\mathcal O^*}\rangle_{\mathfrak g_r} \ne 0\}.$$
\end{definition}
%
%

%
\begin{lemma}\label{lem:wf-cone-relation}
    If $f\in C(\mathfrak g_r^\Fr/G^\Fr,+)$ then 
    $$\overline\WF(f) = \cone(\supp(\FT(f))).$$
\end{lemma}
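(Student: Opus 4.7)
The plan is to transport the defining pairing $\langle f,\Gamma_{\mathcal O^*}\rangle_{\mathfrak g_r}$ to the dual side via Parseval, observe that both $\FT(f)$ and $\FT(\Gamma_{\mathcal O^*})$ are non-negative real functions, so the pairing is nonzero exactly when their supports meet, and finally interpret the support of $\FT(\Gamma_{\mathcal O^*})$ in terms of degeneration using the explicit formula \eqref{eq:ftgggg}.

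More concretely, first I would record the two positivity statements that I need. Since $f\in C(\mathfrak g_r^\Fr/G^\Fr,+)$, Lemma \ref{lem:coadjcharexp}(1) gives $\FT(f)\in q^{N_r/2}\mathbb N_0$-valued; since $\Gamma_{\mathcal O^*}\in C(\mathfrak g_r^\Fr/G^\Fr,+)$ by Proposition \ref{prop:gggg}(3), the same lemma gives $\FT(\Gamma_{\mathcal O^*})\in q^{N_r/2}\mathbb N_0$-valued. In particular both are non-negative real-valued, and their supports are unions of coadjoint $G^\Fr$-orbits.

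Next, I would apply the Parseval identity $\langle\FT(f),\FT(\Gamma_{\mathcal O^*})\rangle_{\mathfrak g_r^*}=\langle f,\Gamma_{\mathcal O^*}\rangle_{\mathfrak g_r}$ recorded after the definition of $\FT$. The left-hand side is
\[
\frac{1}{q^{N_r}}\sum_{\beta\in(\mathfrak g_r^*)^\Fr}\overline{\FT(f)(\beta)}\,\FT(\Gamma_{\mathcal O^*})(\beta),
\]
which is a sum of non-negative reals. Therefore $\langle f,\Gamma_{\mathcal O^*}\rangle_{\mathfrak g_r}\ne 0$ if and only if $\supp(\FT(f))\cap\supp(\FT(\Gamma_{\mathcal O^*}))\ne\emptyset$.

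The last step is to identify this intersection condition with the cone condition. Pick $\alpha\in\mathcal O^*$ and use formula \eqref{eq:ftgggg}: $\FT(\Gamma_{\alpha})(\beta)\ne 0$ iff there exists $g\in G^\Fr$ with $\Ad(g)\eta_B(\beta)\in\eta_B(\alpha)+\mathfrak g_{-r}(\le 0)$. Transporting through the $G$-equivariant isomorphism $\eta_B$, this says $\Ad^*(g)\beta\in\Sigma_\alpha$, i.e.\ that some $G^\Fr$-conjugate of $\beta$ degenerates (in the dual sense) to $\alpha$. Since $\supp(\FT(f))$ is itself $G^\Fr$-stable, we may remove the conjugate: $\supp(\FT(f))\cap\supp(\FT(\Gamma_{\mathcal O^*}))\ne\emptyset$ iff some $\beta\in\supp(\FT(f))$ degenerates to some element of $\mathcal O^*$, which is precisely the statement $\mathcal O^*\in\cone(\supp(\FT(f)))$. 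Combining the three steps gives the claimed equality.

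I do not expect a genuine obstacle: the content of the lemma is to combine Parseval, the positivity from Lemma \ref{lem:coadjcharexp}, and the Kawanaka-section description of $\supp(\FT(\Gamma_\alpha))$ coming from \eqref{eq:ftgggg}. The only delicate point is making sure one keeps straight which side of $\eta_B$ the $\mathfrak{sl}_2$-triple lives on, so that the condition ``$\beta$ degenerates to $\alpha$'' matches the definition on the dual side rather than accidentally asserting the reverse direction.
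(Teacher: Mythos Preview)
Your proposal is correct and is essentially the paper's own argument. The paper expands $f=\sum_{\mathcal O'^*}c_{\mathcal O'^*}\chi_{\mathcal O'^*}$ with $c_{\mathcal O'^*}\ge 0$ and then invokes Proposition~\ref{prop:gggg}(4); your Parseval-plus-positivity step is the same reduction (the non-negativity of $\FT(f)$ is exactly the non-negativity of the $c_{\mathcal O'^*}$), and your use of \eqref{eq:ftgggg} is precisely the content of Proposition~\ref{prop:gggg}(4).
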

\begin{proof}
    By lemma \ref{lem:coadjcharexp}, 
    $$f = \sum_{\mathcal O*\in (\mathfrak g_r^*)^\Fr/G^\Fr}c_{\mathcal O^*}\chi_{\mathcal O^*}, \quad c_{\mathcal O^*} \in \mathbb N_0.$$
    Thus $\langle f,\Gamma_{\mathcal O^*}\rangle\ne 0$ if and only if $\langle \chi_{\mathcal O'^*},\Gamma_{\mathcal O^*}\rangle\ne0$ for some $\mathcal O'\in \supp(\FT(f))$.

    By proposition \ref{prop:gggg}.(4), $\langle \chi_{\mathcal O'^*},\Gamma_{\mathcal O^*}\rangle\ne0$ if and only if $\mathcal O'^*$ intersects $\Sigma_{\mathcal O^*}$.
    But by definition this is the case if and only if $\mathcal O\subset \cone(\mathcal O'^*)$.
    The result follows.
\end{proof}

\begin{prop}
    \label{prop:slice}
    $\hphantom{ }$
    \begin{enumerate}
        \item Let $\alpha\in \mathfrak g_r^*$ and $\beta\in \mathcal N(\mathfrak g_r^*)$.
        Then $\beta \in\overline{\cone}(\alpha)$ if and only if $G.\alpha \cap \Sigma_\beta\ne 0$.
        \item Let $\mathcal O^*$ be a $G^\Fr$-orbit of $(\mathfrak g_r^*)^\Fr$.
        Then 
        $$\overline{\WF}(\chi_{\mathcal O^*}) \subset \overline{\cone}(\mathcal O^*).$$
    \end{enumerate}
\end{prop}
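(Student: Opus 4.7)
The plan is to reduce (2) to (1) and prove (1) as an equivalence: the backward direction by a direct contraction along the adapted cocharacter, and the forward direction by an instability-type argument followed by a unipotent cleanup. For part (2), $\mathcal O'^*\in \overline{\WF}(\chi_{\mathcal O^*})$ unpacks to $\langle \chi_{\mathcal O^*},\Gamma_{\mathcal O'^*}\rangle\ne 0$, which by Proposition \ref{prop:gggg}.(4) yields some $g\in G^\Fr$ and representatives $\alpha\in \mathcal O^*, \beta\in\mathcal O'^*$ with $g.\alpha\in \Sigma_\beta$. Since $G^\Fr\subset G$, part (1) then places $\beta\in \overline{\cone}(\alpha)=\overline{\cone}(\mathcal O^*)$.

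For part (1), the backward direction ($\Leftarrow$): assume $g.\alpha\in \Sigma_\beta$ and fix the $\mathfrak{sl}_2$-triple with adapted cocharacter $\lambda$ through $\eta_B(\beta)$ used to define $\Sigma_\beta$. Using $\tilde G$-equivariance of $\eta_B$, the fact that $\beta$ has $\lambda$-weight $2$, and that $\gamma:=g.\alpha-\beta$ has $\lambda$-weights $\leq 0$, the element $(\lambda(t),t^{-1})\in\tilde G$ sends $g.\alpha$ to $\beta+\sum_{j\leq 0}t^{j-2}\gamma_j$. As $t\to \infty$ the sum vanishes, so $\beta\in \mathcal N(\mathfrak g_r^*)\cap \overline{\tilde G.\alpha}=\overline{\cone}(\alpha)$.

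For the forward direction ($\Rightarrow$), I proceed in two steps. First I use a Kempf-style refinement of Hilbert--Mumford to find a one-parameter subgroup $\mu=(\mu_1,t^k)$ of $\tilde G$ and an $\alpha'\in G.\alpha$ with $\lim_{t\to 0}\mu(t)\alpha' = c\beta$ for some $c\in\overline{\mathbb F}_q^\times$; applying $\lambda(c^{-1/2})\in G$ absorbs the scalar and, combined with the uniqueness of adapted cocharacters (hypothesis \ref{hyp:graded}.(3)), allows me to arrange $\mu_1=\lambda^{-1}$ and $k=1$, forcing $\alpha'\in \beta+\mathfrak g_r^*(\leq 1)$ in the $\lambda$-grading. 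Second, to eliminate the weight-$1$ component $\alpha'_1$, I observe that $\ad(\eta_B(\beta)):\mathfrak g_0(-1)\to\mathfrak g_{-r}(1)$ is surjective --- inside each $\mathfrak{sl}_2$-isotypic summand, $\ad(e)$ identifies the weight $-1$ and weight $+1$ spaces, and the $\mathbb R/\mathbb Z$-grading is respected --- so I choose $\xi\in \mathfrak g_0(-1)$ with $\xi\cdot \beta=-\alpha'_1$. Acting by $u=\exp(\xi)\in G$ and using that $\xi$ strictly lowers $\lambda$-weights on $\mathfrak g_r^*$ yields $u\cdot\alpha'\in \beta+\mathfrak g_r^*(\leq 0)=\Sigma_\beta$.

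The main obstacle is the first step of the forward direction: the standard Hilbert--Mumford criterion only produces 1-PSGs contracting to the closed orbit (here $\{0\}$), so a refinement such as Kempf's optimality theory or the Kirwan--Ness stratification is required to contract into the non-closed orbit $\tilde G.\beta$, together with the identification of the resulting $\mu_1$ with $\lambda^{-1}$ via the uniqueness of adapted cocharacters. Granting this, the unipotent cleanup and the deduction of (2) are routine.
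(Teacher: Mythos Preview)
Your treatment of part (2) and the backward direction of part (1) is correct and matches the paper's argument essentially verbatim.

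The forward direction of part (1), however, has a genuine gap that your sketch does not repair. The difficulty is twofold. First, Hilbert--Mumford, even in Kempf's optimal form, produces a one-parameter subgroup contracting an unstable point to the \emph{closed} orbit in its closure; there is no off-the-shelf statement delivering a $\mu$ with $\lim_{t\to 0}\mu(t).\alpha'\in\tilde G.\beta$ for a prescribed non-closed stratum. Second, and more seriously, even granting such a $\mu=(\mu_1,t^k)$, your appeal to hypothesis \ref{hyp:graded}.(3) does not force $\mu_1=\lambda^{-1}$. That hypothesis asserts uniqueness of the cocharacter \emph{attached to a fixed $\mathfrak{sl}_2$-triple}; it says nothing about an arbitrary cocharacter for which $\beta$ happens to be an eigenvector of the right weight. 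There are many such cocharacters (multiply $\lambda^{-1}$ by any cocharacter into a maximal torus of $G_{\eta_B(\beta)}$), so the identification fails, and without it you have no control over the $\lambda$-weights of $\alpha'-\beta$. Your unipotent cleanup, while correct on its own terms, therefore has nothing valid to clean up.

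The paper avoids instability theory entirely and instead uses transversality of the graded Slodowy slice. From $\mathfrak g_{-r}=[\mathfrak g_0,\eta_B(\beta)]\oplus c_{\mathfrak g_{-r}}(f_\beta)$ one sees that $\mathfrak s_\beta=\eta_B(\beta)+c_{\mathfrak g_{-r}}(f_\beta)$ meets $G.\eta_B(\beta)$ transversally, so the action map $G\times\mathfrak s_\beta\to\mathfrak g_{-r}$ is smooth and its image contains a Zariski-open neighbourhood $\mathcal U\ni\eta_B(\beta)$. Since $\eta_B(\beta)\in\overline{\tilde G.\eta_B(\alpha)}$, the open set $\mathcal U$ meets $\tilde G.\eta_B(\alpha)$; any point $y=a^2\Ad(g)\eta_B(\alpha)$ in the intersection is then $G$-conjugate into $\mathfrak s_\beta$, and a final rescaling by $\lambda(a)$ (which preserves $c_{\mathfrak g_{-r}}(f_\beta)$ and scales $\eta_B(\beta)$ by $a^2$) absorbs the $\mathbb G_m$-factor, landing a $G$-conjugate of $\eta_B(\alpha)$ in $\mathfrak s_\beta\subseteq\eta_B(\Sigma_\beta)$. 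This is elementary and makes your unipotent step unnecessary, since $c_{\mathfrak g_{-r}}(f_\beta)\subseteq\mathfrak g_{-r}(\le 0)$ already.
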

\begin{proof}
    $(1)$ $(\Leftarrow)$ 
    Let $\eta_B(\beta)\in \mathfrak g_{-r},h_\beta \in \mathfrak g_0, f_\beta \in \mathfrak g_r$ be an $\mathfrak {sl}_2$-triple for $\eta_B(\beta)$ and $\lambda$ be an adapted cocharacter.
    By assumption $G.\alpha \cap \Sigma_\beta\ne \emptyset$.
    Let $\eta_B(\beta)+z$ with $z = \sum_{i\le0}z_i\in \mathfrak g_{-r}(\le0)$ be in $\eta_B(G.\alpha\cap \Sigma_\beta)$.
    Let $\widetilde{\lambda_{\beta}}:\mathbb G_m\to \widetilde{G}$ denote the 1-parameter subgroup given by $t\mapsto (\lambda_\beta(t),t)$.
    Then 
    $$\widetilde{\lambda_\beta}(t).(\eta_B(\beta)+\sum_{i\le0}z_i) = \eta_B(\beta) + \sum_{i\le 0}t^{2-i}z_i \to \eta_B(\beta), \text{ as }t\to 0.$$
    %
    Since $\eta_B(\beta)+z$ also lies in the $G$ orbit of $\eta_B(\alpha)$ we get that $\eta_B(\beta)$ lies in $\overline{\widetilde{G}.\eta_B(\alpha)}$.
    $(\Rightarrow)$ Continue the notation from the previous part of the proof. Consider the graded Slowdowy slice at $\eta_B(\beta)$ 
    $$\mathfrak s_{\beta} := \eta_B(\beta)+ c_{\mathfrak g_{-r}}(f_\beta).$$
    Since $\mathfrak g = [\mathfrak g, \eta_B(\beta)] \oplus c_{\mathfrak g}(f_\beta)$ we get by taking graded parts that 
    $$\mathfrak g_{-r} = [\mathfrak g_0, \eta_B(\beta)] \oplus c_{\mathfrak g_{-r}}(f_\beta).$$
    Thus the graded Slodowy slice intersects the orbit $G.\eta_B(\beta)$ at $\eta_B(\beta)$ transversally.
    Since the map $\mu:G\times \mathfrak s_\beta\to \mathfrak g_{-r}$ is smooth and $G,\mathfrak s_\beta$ are both non-singular, there exists an open set $\mathcal U$ containing $\eta_B(\beta)$ in the image of $\mu$.
    Thus for any $x\in \mathfrak g_{-r}$, if $G.x \cap \mathcal U \ne \emptyset$ then $G.x \cap \mathfrak s_{\beta}\ne \emptyset$.
    Since $\eta_B(\beta) \in\overline{\widetilde{G}.\eta_B(\alpha)}$, we have $\mathcal U\cap \widetilde{G}.\eta_B(\alpha)\ne \emptyset$.
    Let $x$ lie in the intersection.
    Since $x\in \mathcal U$, $G.x \cap \eta_B(\Sigma_\beta)\ne\emptyset$ so after conjugating by $G$ appropriately, we obtain an element $y\in \widetilde{G}.\eta_B(\alpha) \cap \eta_B(\Sigma_\beta)$.
    Write $y = a^2 \Ad(g)\eta_B(\alpha)$ for some $a\in \overline{\mathbb F}_q^\times$ and $g\in G$.
    Thus $\Ad(g)\eta_B(\alpha) \in a^{-2}\eta_B(\beta) + c_{\mathfrak g_{-r}}(f_\beta)$. But then $\Ad(\lambda(a^{-1})).y \in \eta_B(\beta) + c_{\mathfrak g_{-r}}(f_\beta)$ and so $G.\eta_B(\alpha) \cap \eta_B(\Sigma_\beta) \ne \emptyset$ as required.

    $(2)$ By lemma \ref{lem:wf-cone-relation},
    $$\overline{\WF}(\chi_{\mathcal O^*}) = \cone(\mathcal O^*).$$
    By part (1), we have that 
    $$\cone(\mathcal O^*) \subset \overline{\cone}(\mathcal O^*).$$
\end{proof}

\section{The wave front set of $0$-graded characters}
\label{sec:ungraded}
In this section we provide a closed form for the wave front set of $\chi_{\mathcal O^*}$ for a co-adjoint orbit $\mathcal O^*\subset \mathfrak g_0^*$ in the $0$-graded piece. 

Under the assumptions of hypothesis \ref{hyp:graded}, restricting to the $0$-graded piece of a $\mathbb R/\mathbb Z$-graded Lie algebra is equivalent to considering an ungraded reductive Lie algebra.
Thus, for this section, we will keep the notation of the previous section but we will omit the $\mathbb R/\mathbb Z$ grading.

\subsection{Borho's theory of $G$-Sheets}

Let \[\mathfrak g^{(m)} = \{x\in \mathfrak g: \dim G.x = m\}.\] This is a $G$-stable locally closed subvariety of $\mathfrak g$ and the irreducible components are called {\it sheets}.

\begin{theorem}
    \label{thm:sheets}
    Suppose the hypotheses in \ref{hyp:graded} are in effect (viewing the ungraded case as a $\mathbb R/\mathbb Z$-graded Lie algebra concentrated in degree $0$).
    \begin{enumerate}
        \item \cite[\S2.3,\S2.5]{premet_stewart_2018} For an element $x\in \mathfrak g$ let $x = x_s + x_n$ be the Jordan decomposition of $x$.
        Let $L_x = C_{G}(x_s)$ and $\mathfrak l_x = c_{\mathfrak g}(x_s)$. The group $L_x$ is a Levi subgroup of $G$.
        Define the map 
        $$\mathscr N:\mathfrak g/G \to \mathcal N(\mathfrak g)/G, \quad \mathscr N(x):=\Ind_{\mathfrak l_x}^{\mathfrak g}L_x.x_n$$
        where $\Ind$ denotes Lusztig-Spaltenstein induction.
        The map $\mathscr N$ is constant on any sheet and every sheet contains a unique nilpotent orbit given exactly by the value of $\mathscr N$ on the sheet.
        \item \cite[Appendix B]{tsai} Fix a sheet $\mathcal S$ and let $\phi=(e,h,f)$ be an $\mathfrak{sl}_2$-triple for $\mathscr N(\mathcal S)$.
        Let 
        $$\mathfrak s_\phi = e+c_{\mathfrak g}(f)$$
        be the associated Slodowy slice and let $G_{\phi}$ be the joint centraliser of $e,h,f$ in $G$.
        Every $G$-orbit in $\mathcal S$ intersects $\mathfrak s_\phi$ in a non-empty discrete set and two points $x,x'\in\mathfrak s_\phi$ are $G$-conjugate if and only if they are $G_\phi$-conjugate.
    \end{enumerate}
\end{theorem}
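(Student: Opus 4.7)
The plan is to combine the Borho-Kraft parameterisation of sheets (in the modular form of \cite{premet_stewart_2018}) with the classical transversality properties of the Slodowy slice. Both parts ultimately reduce to the structural statement that every sheet admits a ``linear model'' of the form $G.(z(\mathfrak l)^{\mathrm{reg}} + \mathcal O_L)$, for some Levi $L\subset G$ and some nilpotent orbit $\mathcal O_L\subset \mathfrak l$ which is rigid modulo the centre.

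For part (1), I would first verify, using the hypotheses in \ref{hyp:graded} on the characteristic, that centralisers $C_G(x_s)$ of semisimple elements are always Levi subgroups, so that $\mathscr N$ is well defined on $\mathfrak g/G$. Granting the Borho-Kraft parameterisation, take $x = z + n$ in the dense part of a sheet $\mathcal S$ with $z\in z(\mathfrak l)^{\mathrm{reg}}$ and $n\in \mathcal O_L$; by regularity of $z$ in the centre, $\mathfrak l_x = \mathfrak l$ and $L_x.x_n = \mathcal O_L$, so $\mathscr N(x) = \Ind_{\mathfrak l}^{\mathfrak g}\mathcal O_L$ is constant on this dense open subset. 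Since $\mathscr N$ only depends on the $G$-conjugacy class of the pair $(\mathfrak l_x, L_x.x_n)$, upper semi-continuity of the Jordan decomposition along with constancy of orbit dimension on $\mathcal S$ extends the equality to all of $\mathcal S$. To see that $\mathcal S$ contains exactly the nilpotent orbit $\Ind_{\mathfrak l}^{\mathfrak g}\mathcal O_L$, specialise $z\to 0$ along a central cocharacter of $L$: the limit lies in $\overline{\Ind_{\mathfrak l}^{\mathfrak g}\mathcal O_L}$, and a dimension count using that orbits in $\mathcal S$ all have dimension $\dim \mathcal S - \dim z(\mathfrak l)$ (which equals $\dim \Ind_{\mathfrak l}^{\mathfrak g}\mathcal O_L$) rules out the non-dense orbits in the closure.

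For part (2), set $e = \mathscr N(\mathcal S)$ and let $\phi = (e,h,f)$ with adapted cocharacter $\lambda$. The Jacobson-Morozov decomposition $\mathfrak g = [\mathfrak g, e] \oplus c_{\mathfrak g}(f)$ makes the action map $\mu : G\times \mathfrak s_\phi \to \mathfrak g$ smooth at $(1,e)$, so its image contains a $\widetilde G$-stable open neighbourhood $\mathcal U$ of $e$. The contracting cocharacter $t\mapsto (\lambda(t),t)$ of $\widetilde G$ retracts every point of $\mathcal S$ (which has $e$ in its closure) into $\mathcal U$, so every $G$-orbit in $\mathcal S$ meets $\mathfrak s_\phi$. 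The intersection $G.x\cap \mathfrak s_\phi$ is discrete because $\dim \mathfrak s_\phi + \dim G.x = \dim G - \dim G_e + \dim G.e = \dim \mathfrak g$ for $x\in \mathcal S$, so the fibres of $\mu$ over $G.x$ are zero-dimensional. For the conjugacy statement, if $\Ad(g)x = x'$ with $x,x'\in \mathfrak s_\phi$, then applying the contracting $\widetilde G$-action with $t\to 0$ sends $x,x'$ to $e$ and forces $g$ to preserve the slice structure; the standard rigidity of the transverse slice then implies $g\in G_\phi = G_e \cap G_h \cap G_f$.

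The main obstacle is certainly the Borho-Kraft parameterisation in good positive characteristic which underpins part (1); establishing the existence of a rigid Levi-nilpotent pair attached to each sheet, together with uniqueness of the nilpotent orbit in its closure, is the technical heart of \cite{premet_stewart_2018} and requires careful control of induced orbits and of centralisers of semisimple elements under the characteristic hypotheses. Once this input is available, the remaining arguments for part (2) are essentially dimension counts together with the smoothness of $\mu$ at $(1,e)$ and the contracting $\widetilde G$-action, both of which are standard consequences of the Jacobson-Morozov theorem.
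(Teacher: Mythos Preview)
The paper does not prove this theorem at all: both parts carry inline citations (\cite[\S2.3,\S2.5]{premet_stewart_2018} for (1) and \cite[Appendix B]{tsai} for (2)) and the theorem is followed immediately by the next subsection with no proof environment. It is being quoted from the literature as background, not established here.

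Your sketch is therefore not comparable to anything in the paper, but as an outline of the cited results it is largely reasonable. A couple of comments. For (1), the reduction to a dense locus via the Borho--Kraft linear model and the dimension identity $\dim \mathcal S - \dim z(\mathfrak l) = \dim \Ind_{\mathfrak l}^{\mathfrak g}\mathcal O_L$ is exactly the standard route, and you correctly flag that the substantive work in positive characteristic is Premet--Stewart's. For (2), the transversality and contracting-action arguments for non-emptiness and discreteness are fine, but the final step is too loose: from $\Ad(g)x = x'$ with $x,x'\in\mathfrak s_\phi$, the contracting action does \emph{not} force the given $g$ into $G_\phi$; the statement is only that some $G_\phi$-element carries $x$ to $x'$. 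The usual argument runs via the factorisation $G_e = G_\phi \cdot U_e$ (with $U_e$ the unipotent radical of $G_e$) together with the fact that the map $G_e \times \mathfrak s_\phi \to e + \mathfrak g(\le 0)$ is an isomorphism onto an open set, so that after adjusting by an element of $U_e$ one lands in $G_\phi$. Your phrase ``standard rigidity of the transverse slice'' hides exactly this step, which is the non-trivial content of the cited appendix.
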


\subsection{The asymptotic cone and rational points}

\begin{prop}
    \label{prop:dilatedsuppungraded}
    Let $x\in \mathfrak g$. Then
    $$\overline{\cone}(x) = \overline{\mathscr N(x)}.$$
\end{prop}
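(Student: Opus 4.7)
Let $Y=\overline{\tilde G.x}$, so the claim is $Y\cap\mathcal N(\mathfrak g)=\overline{\mathscr N(x)}$.

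For the inclusion $\supseteq$ I would use the Slodowy-slice parameterisation of Theorem~\ref{thm:sheets}(2). Fix an $\mathfrak{sl}_2$-triple $\phi=(e,h,f)$ with $e\in\mathscr N(x)$ and an adapted cocharacter $\lambda$, and pick $y\in G.x\cap\mathfrak s_\phi$ provided by Theorem~\ref{thm:sheets}(2), so that $y=e+z$ with $z=\sum_{j\le 0}z_j$ decomposed into (non-positive) $\ad(h)$-weights. The element $(\lambda(t),t^{-1})\in\tilde G$ acts on $y$ by
\[
t^{-2}\Ad(\lambda(t))\,y \;=\; e+\sum_{j\le 0}t^{j-2}z_j \;\xrightarrow{t\to\infty}\; e,
\]
so $e\in Y$, and since $Y\cap\mathcal N(\mathfrak g)$ is closed and $G$-invariant we get $\overline{\mathscr N(x)}=\overline{G.e}\subseteq Y\cap\mathcal N(\mathfrak g)$.

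For the reverse inclusion I would first show that $Y$ lies in the closure of the sheet $\mathcal S$ of $x$. The $\mathbb G_m$-scaling is a $G$-equivariant automorphism of $\mathfrak g$ preserving orbit dimensions, and $\mathscr N(t^2x)=\mathrm{Ind}_{L_x}^G(L_x.(t^2x_n))=\mathscr N(x)$ because nilpotent $L_x$-orbits are conical (an adapted cocharacter in $L_x$ rescales $x_n$). Hence $\tilde G.x\subseteq\mathcal S$ and $Y\subseteq\overline{\mathcal S}$. Writing $m=\dim G.x=\dim\mathscr N(x)$, I would then restrict the adjoint quotient $\varphi\colon\mathfrak g\to\mathfrak g/\!/G$ to $Y$: its image is the one-dimensional weighted $\mathbb G_m$-curve through $\varphi(x)$ (assuming $x\notin\mathcal N(\mathfrak g)$, else the claim is trivial), and its generic fibre is the single $G$-orbit $G.(t^2x)$ of dimension $m$. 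Upper semicontinuity of fibre dimension on the irreducible $Y$ combined with $Y\cap\mathcal N(\mathfrak g)\subsetneq Y$ then forces $Y\cap\mathcal N(\mathfrak g)$ to have pure dimension $m$, so each of its irreducible components is the closure of a nilpotent $G$-orbit of dimension $m$ in $Y$.

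The main obstacle is the final identification: any such top-dimensional orbit $\mathcal O\subseteq Y\subseteq\overline{\mathcal S}$ must coincide with $\mathscr N(x)$. Orbits of dimension $m$ lying inside $\mathcal S$ itself are forced by Theorem~\ref{thm:sheets}(1) to equal $\mathscr N(\mathcal S)=\mathscr N(x)$, so the only remaining possibility is that $\mathcal O$ lies in the boundary $\overline{\mathcal S}\setminus\mathcal S$; I would rule this out via the sheet-theoretic identity
\[
\overline{\mathcal S}\cap\mathcal N(\mathfrak g)=\overline{\mathscr N(\mathcal S)},
\]
which reflects the flatness of the adjoint quotient restricted to $\overline{\mathcal S}$ (its special fibre being the closure of the induced nilpotent orbit). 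Combining this identity with $Y\subseteq\overline{\mathcal S}$ yields $Y\cap\mathcal N(\mathfrak g)\subseteq\overline{\mathscr N(x)}$, completing the proof.
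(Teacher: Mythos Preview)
Your argument is correct, and the $\supseteq$ direction is essentially identical to the paper's: the paper also places $G.x$ on the Slodowy slice $\mathfrak s_\phi$ via Theorem~\ref{thm:sheets}(2) and then invokes Proposition~\ref{prop:slice}(1), whose proof is precisely your $\tilde\lambda$-limit computation.

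For $\subseteq$ you reach the right endpoint but by a detour. The fibre-dimension\slash semicontinuity paragraph is unnecessary: once you have $Y\subseteq\overline{\mathcal S}$ and the identity $\overline{\mathcal S}\cap\mathcal N(\mathfrak g)=\overline{\mathscr N(\mathcal S)}$, the inclusion $Y\cap\mathcal N(\mathfrak g)\subseteq\overline{\mathscr N(x)}$ follows immediately, with no need to establish pure-dimensionality of $Y\cap\mathcal N(\mathfrak g)$ first. (Also, your justification that $\tilde G.x\subseteq\mathcal S$ via $\mathscr N(t^2x)=\mathscr N(x)$ is slightly off the mark: different sheets can share an induced orbit. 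The correct reason is simply that $\tilde G.x$ is an irreducible subset of $\mathfrak g^{(m)}$, hence lies in a single irreducible component.) The paper is more direct still: rather than passing through the sheet, it notes that $x\in\mathfrak z_x+L_x.x_n$ with $\mathfrak z_x=Z(\mathfrak l_x)$, that this affine set is $\mathbb G_m$-stable so $\tilde G.x\subseteq G.(\mathfrak z_x+L_x.x_n)$, and then quotes Borho's Lemma~2.5(b), which gives $\mathcal N(\mathfrak g)\cap\overline{G.(\mathfrak z_x+L_x.x_n)}\subseteq\overline{\mathscr N(x)}$ outright. Your sheet identity is a consequence of the same Borho result (since $\overline{\mathcal S}\subseteq\overline{G.(\mathfrak z_x+L_x.x_n)}$), so both arguments ultimately rest on the same input; the paper just invokes it one step earlier and skips the dimension count.
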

\begin{proof}
    $(\supseteq)$ Let $\mathcal S$ be a sheet containing $x$ and $e,h,f$ be an $\mathfrak {sl}_2$-triple for $\mathscr N(x)$.
    By theorem \ref{thm:sheets}.(2) $G.x \cap \mathfrak s \ne \emptyset$.
    Since 
    $$c_{\mathfrak g}(f)\subseteq \bigoplus_{i\le 0}\mathfrak g(i)$$
    we have $\mathfrak s \subset \Sigma_{G.e}$ and so $G.x$ intersects $\Sigma_{G.e}$.
    By Proposition \ref{prop:slice}.(1), $\mathscr N(x)$ lies in $\overline{\cone}(x)$.
    
    $(\subseteq)$ Let $\mathfrak z_x$ denote the center of $\mathfrak l_x$. 
    Note that $x_s\in \mathfrak z_x$ so $x\in \mathfrak z_x + \mathbb L_x. x_n$ and that $\mathfrak z_x + \mathbb L_x. x_n$ is a $\mathbb G_m$-invariant set.
    Thus $\widetilde{G}.x \subseteq G.(\mathfrak z_x + \mathbb L_x. x_n)$ and so $\overline{\widetilde{G}.x} \subseteq \overline{G.(\mathfrak z_x + \mathbb L_x. x_n)}$.
    By \cite[Lemma 2.5 (b)]{borho} we have $\mathcal N \cap \overline{G(\mathfrak z_x + \mathbb L_x.x_n)} \subseteq \overline{\mathscr N(x)}$. The desired inclusion follows.

\end{proof}

\begin{definition}
    \begin{enumerate}
        \item Let $\Theta\subseteq \mathfrak g^{\oplus 3}$ denote the set of all $\mathfrak {sl}_2$-triples $(e,h,f)\in \mathfrak g^{\oplus 3}$.
        This set is determined by the equations
        $$[h,e] = 2e, \quad [h,f] = -2f, \quad [e,f] = h$$
        and so $\Theta$ is a closed $G$-invariant subvariety of $\mathfrak g^{\oplus 3}$.
        \item For a nilpotent orbit $\mathcal O$ let $\Theta(\mathcal O)$ denote the subset of $\Theta$ consisting of elements $(e,h,f)$ with $e\in \mathcal O$.
        \item For an $\mathfrak {sl}_2$-triple $\phi = (e,h,f)\in \Theta$ let 
        $$\mathfrak s_\phi:= e + c_{\mathfrak g}(f)$$
        denote the corresponding Slodowy slice.
        \item For an element $x\in \mathfrak g$ let $\Theta(x)$ denote the set of all $\mathfrak {sl}_2$-triples $\phi$ such that $\phi\in \Theta(\mathscr N(x))$ and $x\in \mathfrak s_\phi$.
        This set is always non-empty by theorem \ref{thm:sheets}.(2).
        \item Call an element $x\in \mathfrak g$ good if $G_x^\circ G_\phi = G_x G_\phi$ for some $\phi\in \Theta(x)$.
    \end{enumerate}
\end{definition}


\begin{lemma}
    Let $x$ be semisimple or nilpotent. Then $x$ is good.
\end{lemma}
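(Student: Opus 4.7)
The plan is to handle the nilpotent and semisimple cases separately, and in each case produce an explicit $\phi \in \Theta(x)$.

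If $x$ is nilpotent then $x_s = 0$ and $x_n = x$, so $\mathscr N(x) = G.x$ and $L_x = G$. Under hypothesis \ref{hyp:graded}.(3), Jacobson--Morozov is available, and we may complete $x$ to an $\mathfrak{sl}_2$-triple $\phi = (x, h, f)$; since $e = x$, the condition $x \in e + c_{\mathfrak g}(f)$ is tautological and $\phi \in \Theta(x)$. Because $G_\phi$ centralises the whole triple we have $G_\phi \subseteq G_e = G_x$. The standard structure theorem for centralisers of nilpotent elements in good characteristic, which underlies the Bala--Carter classification (see e.g.\ \cite[\S 2]{premet_stewart_2018}), yields a semidirect decomposition $G_e = U_e \rtimes G_\phi$ with $U_e$ connected unipotent; in particular $G_\phi$ surjects onto the component group $G_e/G_e^\circ$. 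Hence $G_x = G_x^\circ \cdot G_\phi = G_x \cdot G_\phi$.

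If $x$ is semisimple then $x_n = 0$, $x = x_s$, and $G_x = L_x = C_G(x)$. Here the argument reduces to showing that $L_x$ is already connected, for then $G_x^\circ = G_x$ and the desired equality is automatic for any $\phi \in \Theta(x)$. Non-emptiness of $\Theta(x)$ follows from Theorem \ref{thm:sheets}.(2): the sheet attached to $\mathscr N(x)$ contains $G.x$, so any fixed $\phi_0 \in \Theta(\mathscr N(x))$ admits some $g \in G$ with $g.x \in \mathfrak s_{\phi_0}$, whence $g^{-1}.\phi_0 \in \Theta(x)$. For connectedness, pick a maximal torus $T$ with $x \in \mathfrak t$; the root-space decomposition gives $c_{\mathfrak g}(x) = \mathfrak t \oplus \bigoplus_{\alpha(x)=0} \mathfrak g_\alpha$, which is the Lie algebra of the connected Levi $M \subseteq G$ generated by $T$ and the corresponding root subgroups, so $L_x^\circ = M$. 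Steinberg's reflection theorem forces any Weyl element fixing $x \in \mathfrak t$ to lie in $W_M$, and a Bruhat-decomposition argument then upgrades this to $L_x = M$, establishing connectedness.

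The main obstacle is the Lie-algebra version of Steinberg's connectedness theorem used in the semisimple case: the group-theoretic analogue famously fails when $G$ is not simply connected, but the Lie-algebra version only needs good characteristic together with the root-space/Bruhat argument above, which goes through under the paper's running hypotheses on $p$. The nilpotent case is comparatively routine, since the semidirect-product structure of $G_e$ is already compatible with the adapted-cocharacter framework supplied by hypothesis \ref{hyp:graded}.(3).
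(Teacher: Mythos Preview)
Your proof is correct and follows essentially the same approach as the paper: in the nilpotent case both arguments reduce to the surjection $G_\phi \twoheadrightarrow G_x/G_x^\circ$ coming from the semidirect decomposition of $G_e$, and in the semisimple case both reduce to connectedness of $C_G(x)$. The only difference is that the paper simply asserts $G_x = G_x^\circ$ for semisimple $x$ without argument, whereas you supply the standard Steinberg/Bruhat justification.
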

\begin{proof}
    Let $\phi\in \Theta(x)$.
    If $x$ is semisimple then $G_x = G_x^\circ$ and so it is trivially good.
    If $x$ is nilpotent, then $\mathscr N(x) = G.x$ and $G.x \cap \mathfrak s_\phi = \{x\}$ so $\phi(e) = x$.
    By \cite[Remark 3.7.5 (ii)]{CM}, the inclusion map $G_\phi \to G_x$ induces an isormorphism of component groups.
    In particular $G_\phi\to G_x/G_x^\circ$ is surjective and so $G_x^\circ G_\phi = G_x G_\phi$.
\end{proof}


\begin{lemma}\label{lem:rational-points}
    Let $x\in \mathfrak g^\Fr$ be good.
    Then $\Theta(x)^\Fr$ is non-empty.
\end{lemma}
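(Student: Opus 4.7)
The plan is to apply the Lang--Steinberg theorem to the connected group $G_x^\circ$ acting on $\Theta(x)$, which reduces the whole question to showing that $G_x^\circ$ acts transitively on $\Theta(x)$. Both $\Theta(x)$ and $G_x$ are $\Fr$-stable because $x \in \mathfrak g^\Fr$, and consequently so is the identity component $G_x^\circ$. The heart of the argument will therefore be establishing transitivity; once that is in place, the rational point is produced essentially formally.

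First I would show the weaker statement that $G_x$ itself acts transitively on $\Theta(x)$, and then upgrade using the goodness hypothesis. Given $\phi, \phi' \in \Theta(x)$, the classical fact that $\mathfrak{sl}_2$-triples with first coordinate in a common nilpotent orbit are $G$-conjugate yields $g \in G$ with $g.\phi = \phi'$. Then $x \in \mathfrak s_{\phi'} = g.\mathfrak s_\phi$, so $g^{-1}.x \in \mathfrak s_\phi$, and $g^{-1}.x$ also lies in the sheet $\mathcal S$ containing $x$ (since $\mathcal S$ is $G$-stable). Theorem \ref{thm:sheets}.(2) then produces $h \in G_\phi$ with $h.x = g^{-1}.x$, and $u := gh \in G_x$ satisfies $u.\phi = \phi'$. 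The goodness hypothesis now applies directly: since $G_\phi$ fixes $\phi$,
$$\Theta(x) = G_x.\phi = (G_x G_\phi).\phi = (G_x^\circ G_\phi).\phi = G_x^\circ.\phi,$$
so $G_x^\circ$ acts transitively.

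Transitivity and $\Fr$-stability then let me write $\Fr(\phi) = k.\phi$ for some $k \in G_x^\circ$. Lang--Steinberg applied to the connected, $\Fr$-stable group $G_x^\circ$ produces $g \in G_x^\circ$ with $g^{-1}\Fr(g) = k^{-1}$, and a direct computation gives $\Fr(g.\phi) = \Fr(g).k.\phi = g \cdot (g^{-1}\Fr(g) k).\phi = g.\phi$, so $g.\phi \in \Theta(x)^\Fr$. The main obstacle in this argument is the $G_x$-transitivity step, since it rests on the transversality of the Slodowy slice and the unique extraction of a $G_\phi$-conjugator from Theorem \ref{thm:sheets}.(2); after that, goodness and Lang--Steinberg combine almost mechanically to give the desired rational point.
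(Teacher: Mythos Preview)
Your argument is correct and follows essentially the same route as the paper: show that $\Theta(x)$ is a single $G_x^\circ$-orbit (via $G$-conjugacy of triples, the $G_\phi$-conjugacy statement from Theorem~\ref{thm:sheets}(2), and the goodness hypothesis), then apply Lang--Steinberg. The only place the paper is slightly more careful is in justifying that $\Theta(x)$ is $\Fr$-stable: this needs $\mathscr N(x)$ to be $\Fr$-stable, which the paper deduces from Proposition~\ref{prop:dilatedsuppungraded} (since $\overline{\mathscr N(x)}=\mathcal N(\mathfrak g)\cap\overline{\widetilde G.x}$ is visibly $\Fr$-stable), whereas you assert it directly from $x\in\mathfrak g^\Fr$.
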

\begin{proof}
    By proposition \ref{prop:dilatedsuppungraded}, $\overline{\mathscr N(x)} = \mathcal N(\mathfrak g) \cap \overline{\widetilde{G}.x}$. 
    But both $\mathcal N(\mathfrak g)$ and $\overline{\widetilde {G}.x}$ are $\Fr$-stable so the intersection must be too.
    It follows that $\mathscr N(x)$ is $\Fr$-stable and hence so is $\Theta(x)$.
    We will show that $\Theta(x)$ is a single $G_x^\circ$-orbit and hence by Lang's theorem, contains an $\Fr$-fixed point.
    Let $\phi\in \Theta(x)$.
    Any other element $\phi'\in \Theta(x)$ is of the form $g\phi$ since $\Theta(\mathscr N(x))$ is a single $G$-orbit.
    But $x\in \mathfrak s_{g\phi}$ if and only if $g^{-1}x\in \mathfrak s_\phi$.
    By theorem \ref{thm:sheets}.(3) there exists an element $h\in G_\phi$ such that $g^{-1}x = hx$.
    Thus $gh\in G_x$ and so $g\in G_x G_\phi$.
    But $G_x^\circ G_\phi = G_x G_\phi$ and so $\phi'\in G_x^\circ \phi$ as required.
\end{proof}

\begin{remark}
    After an earlier version of this preprint was posted on the arxiv, Tsai in \cite[Theorem B.4]{tsai} proved that, in fact, all elements are good.
    Thus we can drop the assumption that $x$ is good in lemma \ref{lem:rational-points} and in section \ref{sec:wf-lie-algebra}.
\end{remark}

\subsection{Wave front sets}\label{sec:wf-lie-algebra}
Transfer the map $\mathscr N$ to a map $\mathfrak g^*/G \to \mathcal N(\mathfrak g^*)/G$ using the identification between $\mathfrak g$ and $\mathfrak g^*$ induced by the bilinear form $B$.
\begin{theorem}
    \label{thm:ratwfungraded}
    Let $\mathcal O^*\in \mathfrak g^\Fr/G^\Fr$ be a co-adjoint orbit.
    Then 
    $$\overline{\WF}(\chi_{\mathcal O^*}) \subset \overline{\mathscr N(\mathcal O^*)}$$
    and 
    $$\overline{\WF}(\chi_{\mathcal O^*}) \cap \mathscr N(\mathcal O^*)^\Fr \ne\emptyset.$$
    %
    
\end{theorem}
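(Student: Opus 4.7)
The plan is to reduce both statements to facts about asymptotic cones that have already been established in the excerpt. By Lemma \ref{lem:wf-cone-relation}, $\overline{\WF}(\chi_{\mathcal O^*})$ coincides with the rational asymptotic cone $\cone(\mathcal O^*)$, while by Proposition \ref{prop:dilatedsuppungraded} the variety $\overline{\mathscr N(\mathcal O^*)}$ is exactly the geometric asymptotic cone $\overline{\cone}(\mathcal O^*)$. The theorem thus becomes a containment of cones together with a rationality statement about $\cone(\mathcal O^*)$.

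For the first containment I will simply chain Proposition \ref{prop:slice}.(2), which gives $\overline{\WF}(\chi_{\mathcal O^*}) \subset \overline{\cone}(\mathcal O^*)$, with the identification $\overline{\cone}(\mathcal O^*) = \overline{\mathscr N(\mathcal O^*)}$ from Proposition \ref{prop:dilatedsuppungraded}. No further input is needed.

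For the non-emptiness statement I rewrite $\overline{\WF}(\chi_{\mathcal O^*}) = \cone(\mathcal O^*)$ via Lemma \ref{lem:wf-cone-relation} and then produce an explicit $\Fr$-fixed degeneration. Fix a rational representative $x \in (\mathcal O^*)^\Fr$; by Tsai's result (cited in the remark following Lemma \ref{lem:rational-points}) $x$ is good, so Lemma \ref{lem:rational-points} supplies an $\Fr$-fixed $\mathfrak{sl}_2$-triple $\phi = (e,h,f) \in \Theta(x)^\Fr$. Then $e \in \mathscr N(x)^\Fr = \mathscr N(\mathcal O^*)^\Fr$. Hypothesis \ref{hyp:graded}.(3) allows me to select an adapted cocharacter $\lambda$ for $\phi$ that commutes with $\Fr$, and standard $\mathfrak{sl}_2$ representation theory yields $c_{\mathfrak g}(f) \subset \bigoplus_{j \le 0} \mathfrak g(j)$. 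The membership $x \in \mathfrak s_\phi = e + c_{\mathfrak g}(f)$, which is built into the condition $\phi \in \Theta(x)$, therefore exhibits $x \in e + \bigoplus_{j \le 0} \mathfrak g(j)$, which is exactly the condition for $x$ to degenerate rationally to $e$. Hence $e \in \cone(\mathcal O^*) \cap \mathscr N(\mathcal O^*)^\Fr$ as required.

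All the genuinely nontrivial work has already been packaged into the preceding results: Proposition \ref{prop:slice}.(2) absorbs the Slodowy-slice control on $\overline{\WF}$, Proposition \ref{prop:dilatedsuppungraded} absorbs Borho's sheet theorem, and Lemma \ref{lem:rational-points} absorbs the rationality argument via Lang's theorem applied to the $G_x^\circ$-orbit structure on $\Theta(x)$. I expect the only potential snag to be bookkeeping around $\Fr$-equivariance: specifically, the rationality of an adapted cocharacter and of a Slodowy-slice representative of $x$. These are precisely the points handled by hypothesis \ref{hyp:graded}.(3) and Lemma \ref{lem:rational-points} (the latter being unconditional thanks to Tsai), so the argument should go through cleanly.
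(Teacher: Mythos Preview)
Your proof is correct and follows essentially the same route as the paper. The paper proves the containment by citing Propositions \ref{prop:slice} and \ref{prop:dilatedsuppungraded} exactly as you do, and for the non-emptiness it picks $\alpha\in\mathcal O^*$, takes $\phi\in\Theta(\eta_B(\alpha))^\Fr$ via Lemma \ref{lem:rational-points}, and then appeals directly to equation \eqref{eq:ftgggg} to get $\langle\chi_{\mathcal O^*},\Gamma_{\mathcal O'^*}\rangle\ne0$; your version routes the same computation through Lemma \ref{lem:wf-cone-relation} and the Slodowy-slice inclusion $c_{\mathfrak g}(f)\subset\bigoplus_{j\le0}\mathfrak g(j)$, which is just a repackaging of the same step. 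The only cosmetic difference is that you work on the $\mathfrak g$ side throughout, suppressing the $\eta_B$ that the paper keeps explicit when passing from $\mathcal O^*\subset\mathfrak g^*$ to $\mathfrak g$.
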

\begin{proof}
    The containment follows from proposition \ref{prop:slice} and proposition \ref{prop:dilatedsuppungraded}.
    
    It remains to show that 
    $$\overline{\WF}(\chi_{\mathcal O^*}) \cap \mathscr N(\mathcal O^*)^\Fr \ne\emptyset.$$
    %
    
    %
    Let $\alpha \in \mathcal O^*$ and $\phi\in \Theta(\eta_B(\alpha))^\Fr$.
    By definition of $\Theta(\eta_B(\alpha))^\Fr$, we have $\eta_B(\alpha) \in \mathfrak s^\Fr_\phi$ and $G. \phi(e) = \mathscr N(\eta_B(\alpha))$.
    Let $\mathcal O' = G^\Fr.\phi(e)$ and let $\mathcal O'^*$ denote the corresponding orbit of $\mathfrak g^*$.
    By Equation \ref{eq:ftgggg} it follows that $\langle \chi_{\mathcal O^*}, \Gamma_{\mathcal O'^*}\rangle \ne 0$ as required.
    
\end{proof}

%
\section{The wave front set of positive depth representations}
\label{sec:wf}
\subsection{Preliminaries}
Let $\mathbb G$ be a reductive group defined over $F$, $\textfrak g$ denote the Lie algebra of $\mathbb G$ and let $G=\mathbb G(F), \mathfrak g = \textfrak g(F)$ and $G^{un} = \mathbb G(\Fun), \mathfrak g^{un} = \textfrak g(\Fun)$.
Let $\Fr\in \Gal(\Fun/F)$ be a lift of Frobenius.
Let $\mu$ (resp. $\mu^{\mathfrak g}$) denote a fixed Haar measure on $G$ (resp. $\mathfrak g$).
For the main results of this section we will need the residue characteristic $p$ of $F$ to be larger than some constant depending on the absolute root datum of $\mathbb G$ and the ramification index of $F/\mathbb Q_p$. 
We refer to hypotheses \ref{hyp:exp}, \ref{hyp:p}, and \ref{hyp:bilinear-form} for the precise requirements.

Let $\mathscr B(\mathbb G,F)$ (resp. $\mathscr B(\mathbb G,F^{un})$) denote the (enlarged) Bruhat-Tits building for $G$ (resp. $G^{un}$).
For $x\in \mathscr B(\mathbb G,F)$ (resp. $x\in \mathscr B(\mathbb G,\Fun)$) and $r\ge 0$ let $G_{x,r}$ (resp. $G^{un}_{x,r}$) denote the Moy-Prasad filtration subgroups for $G$ (resp. $G^{un}$) associated to the point $x$.
Similarly for $r\in \mathbb R$, let $\mathfrak g_{x,r}, \mathfrak g^*_{x,r}, \mathfrak g^{un}_{x,r},\mathfrak g^{un,*}_{x,r}$ denote the Moy-Prasad filtration subgroups of the Lie algebra and the linear dual of the Lie algebra.
Let $G_{x,r^+}, G^{un}_{x,r^+},\mathfrak g_r,\mathfrak g_{r^+}$ etc. denote the usual objects.
Recall that $\mathfrak g^{*}_{x,-r}$ can be characterised as 
$$\mathfrak g^*_{x,-r} = \{\alpha\in \mathfrak g^*: \alpha(X) \in \mathfrak p,\ \forall X\in \mathfrak g_{x,r^+}\}.$$

\begin{hypothesis}\label{hyp:exp}
    Either
    \begin{enumerate}
        \item the exponential function $\exp$ converges on $\mathfrak g_{0^+}$, or 
        \item there is a suitable mock-exponential function defined on $\mathfrak g_{0^+}\to G_{0^+}$.
    \end{enumerate}
    We refer to \cite[Lemma 3.2]{barmoy} for a sufficient condition to guarantee the convergence of the exponential map. 
    We refer to \cite[Remark 3.2.2]{debacker2} for a discussion on mock exponential maps.
    In either case we write
    $$\exp:\mathfrak g_{0^+}\to G_{0^+}$$
    for the chosen function.
    
\end{hypothesis}

\begin{definition}
    Let $x\in \mathscr B(\mathbb G,F)$,
    \begin{enumerate}
        \item Define
        $$\underline{\mathfrak g}^{un}_{x,r} := \mathfrak g^{un}_{x,r}/\mathfrak g^{un}_{x,r^+}, \quad \underline{\mathfrak g}^{un,*}_{x,r} := \mathfrak g^{un,*}_{x,r}/\mathfrak g^{un,*}_{x,r^+}.$$
        Since $x\in \mathscr B(\mathbb G,F)$ it is $\Gal(\Fun/F)$-fixed and
        $$\underline{\mathfrak g}_{x,r} = \mathfrak g_{x,r}/\mathfrak g_{x,r^+}, \quad \underline{\mathfrak g}^{*}_{x,r} = \mathfrak g^{*}_{x,-r}/\mathfrak g^{*}_{x,r^+}.$$
        For an element $X\in \mathfrak g_{x,r}$ we write $\underline X$ for its image in $\underline {\mathfrak g}_{x,r}$.
        Note that
        $$\underline{\mathfrak g}_{x,r} = \underline{\mathfrak g}_{x,r}^{un,\Fr}, \quad \underline{\mathfrak g}^*_{x,r} = \underline{\mathfrak g}_{x,r}^{un,*,\Fr}.$$
        \item The natural pairing $\langle-,-\rangle:\mathfrak g^{*}\times \mathfrak g\to F$ descends to a perfect pairing
        $$\langle-,-\rangle:\underline{\mathfrak g}^{*}_{x,-r}\times \underline{\mathfrak g}_{x,r}\to \mathbb F_q.$$
        Thus we obtain a natural identification between $(\underline{\mathfrak g}_{x,r})^*$ and $\underline{\mathfrak g}^{*}_{x,-r}$.
        \item The character $\chi$ induces an isomorphism 
        $$\underline{\mathfrak g}^{*}_{x,-r} \xrightarrow{\sim} (\underline{\mathfrak g}_{x,r})^\wedge, \quad \alpha \mapsto \chi(\langle \alpha, - \rangle)$$
        between $\underline{\mathfrak g}^{*}_{x,-r}$ and the Pontryagin dual of $\underline{\mathfrak g}_{x,r}$.
        \item Recalling the identification in $(2)$, let $\FT_{x,r}$ denote the Fourier transform from $C(\underline{\mathfrak g}_{x,r})$ to $C(\underline{\mathfrak g}^{*}_{x,-r})$ and $\FT^*_{x,-r}$ denote the Fourier transform from $C(\underline{\mathfrak g}^{*}_{x,-r})$ to $C(\underline{\mathfrak g}_{x,r})$.
        \item For a compactly supported smooth function $f:\mathfrak g\to \mathbb C$ define
        $$\mathrm{FT}(f):\mathfrak g^* \to \mathbb C, \quad \alpha \mapsto \int_{\mathfrak g}\chi(\alpha(x))f(x)dx.$$
        Define the inner product $\langle-,-\rangle$ on $C_c^{\infty}(\mathfrak g)$ to be 
        $$\langle f, g \rangle =  \int_{\mathfrak g}\overline{f(x)}g(x)dx, \quad f,g\in C_c^\infty(\mathfrak g).$$
        Similarly, define the Fourier transform and inner product for compactly supported smooth functions on $\mathfrak g^*$.
        \item Let $\underline G^{un}_{x}:=G^{un}_{x,0}/G^{un}_{x,0^+}$.
        This is the $\overline{\mathbb F}_q$-points of a connected reductive group defined over $\mathbb F_q$ and $\underline G_x^{un,\Fr} = G_{x,0}/G_{x,0^+}$.
        \item For $r>0$, define $\underline G^{un}_{x,r}:=G^{un}_{x,r}/G^{un}_{x,r^+}$ and $\underline G_{x,r}:=G_{x,r}/G_{x,r^+}$.
        The exponential map $\exp$ descends to an isomorphism of abelian groups
        $$\exp_{x,r}:\underline{\mathfrak g}^{un}_{x,r} \xrightarrow{\sim} \underline G^{un}_{x,r}$$
        that restricts to an isomorphism $\underline{\mathfrak g}_{x,r}\to \underline G_{x,r}$.
        \item For a function $f:\underline{\mathfrak g}_{x,r}\to \mathbb C$ let $L^{\mathfrak g}_{x,r}(f):\mathfrak g\to \mathbb C$ denote the function on $\mathfrak g$ obtained by lifting $f$ along $\mathfrak g_{x,r}\to \underline{\mathfrak g}_{x,r}$ and extending by zero outside of $\mathfrak g_{x,r}$.
        Similarly define $L^{\mathfrak g^*}_{x,r}$ for functions on $\underline{\mathfrak g}^{*}_{x,r}$ and $L^{G}_{x,r}$ for functions on $\underline G_{x,r}$.
        \item Define the Lie algebra
        $$\overline{\mathfrak g}^{un}_x := \bigoplus_{r\in \mathbb R/\mathbb Z}\underline{\mathfrak g}^{un}_{x,r}$$
        with Lie bracket 
        $$[-,-]:\underline{\mathfrak g}^{un}_{x,r}\times \underline{\mathfrak g}^{un}_{x,s}\to \underline{\mathfrak g}^{un}_{x,r+s}$$
        defined as in \cite[\S4.2]{debacker}.
        The group $\underline G^{un}_x$ acts on $\underline{\mathfrak g}^{un}_{x,r}$ for all $r$ and has Lie algebra $\underline{\mathfrak g}^{un}_{x,0}$.
        Moreover the actions are compatible with $\Fr$.
    \end{enumerate}
\end{definition}

%
    %

\begin{hypothesis}\label{hyp:p}
    The hypotheses in \cite[\S4]{debacker} hold for $\mathbb G$.
    In particular the data $\underline G^{un}_x, \overline{\mathfrak g}^{un}_x,\sigma$ satisfy the hypotheses in hypothesis \ref{hyp:graded}.
\end{hypothesis}

\begin{lemma}\cite[\S3.1]{debacker2}
    Let $r\le s$.
    Then $f\in C(\mathfrak g_{x,r}/\mathfrak g_{x,s})$ if and only if $\FT(f)\in C(\mathfrak g^*_{x,(-s)^+}/\mathfrak g^*_{x,(-r)^+})$.
\end{lemma}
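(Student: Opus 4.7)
The plan is to reduce the equivalence to Pontryagin duality between the finite abelian groups $\mathfrak g_{x,r}/\mathfrak g_{x,s}$ and $\mathfrak g^*_{x,(-s)^+}/\mathfrak g^*_{x,(-r)^+}$. Specifically, I would identify $\mathfrak g^*_{x,(-r)^+}$ as the annihilator of $\mathfrak g_{x,r}$ under the character pairing $(\alpha, X) \mapsto \chi(\langle \alpha, X\rangle)$, and similarly $\mathfrak g^*_{x,(-s)^+}$ as the annihilator of $\mathfrak g_{x,s}$; the lemma then becomes a standard finite Fourier analysis statement.

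First, I would verify the identification
$$\mathfrak g^*_{x,(-r)^+} = \{\alpha \in \mathfrak g^* : \alpha(\mathfrak g_{x,r})\subset \mathfrak p_F\}.$$
This uses the characterisation $\mathfrak g^*_{x,-t} = \{\alpha : \alpha(\mathfrak g_{x,t^+}) \subset \mathfrak p_F\}$ recalled in the preamble together with the discreteness of the Moy--Prasad jumps: for $u > -r$ sufficiently close to $-r$ one has $(-u)^+ = r$, whence $\mathfrak g^*_{x,u} = \{\alpha : \alpha(\mathfrak g_{x,r}) \subset \mathfrak p_F\}$, and taking the union over such $u$ yields the claim. Since $\chi$ is trivial on $\mathfrak p_F$, this identifies $\mathfrak g^*_{x,(-r)^+}$ with the annihilator of $\mathfrak g_{x,r}$ for the character pairing, and symmetrically for $\mathfrak g^*_{x,(-s)^+}$.

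For the forward implication, given $f \in C(\mathfrak g_{x,r}/\mathfrak g_{x,s})$, decompose $\mathfrak g_{x,r}$ into cosets of $\mathfrak g_{x,s}$ via a set $R$ of representatives and compute
$$\FT(f)(\alpha) = \sum_{X_0 \in R} f(X_0)\, \chi(\alpha(X_0)) \int_{\mathfrak g_{x,s}} \chi(\alpha(Y))\,dY.$$
The inner integral vanishes unless $\chi\circ\alpha$ is trivial on $\mathfrak g_{x,s}$, that is, unless $\alpha \in \mathfrak g^*_{x,(-s)^+}$ by the identification above; so $\FT(f)$ is supported there. For such $\alpha$ the remaining finite sum depends on $\alpha$ only through the values $\chi(\alpha(X_0))$ for $X_0 \in \mathfrak g_{x,r}$, which are unchanged when $\alpha$ is translated by an element of $\mathfrak g^*_{x,(-r)^+}$. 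This gives $\FT(f) \in C(\mathfrak g^*_{x,(-s)^+}/\mathfrak g^*_{x,(-r)^+})$.

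The converse follows by Fourier inversion: the forward direction applied symmetrically to $\FT(f)$ shows that $\FT(\FT(f))$ is supported on $\mathfrak g_{x,r}$ and constant on cosets of $\mathfrak g_{x,s}$, and since $\FT(\FT(f))(X) = f(-X)$ and both $\mathfrak g_{x,r}$ and $\mathfrak g_{x,s}$ are stable under negation, the same holds for $f$ itself. I do not expect a substantive obstacle; the only delicate point is tracking the ``$+$'' decorations on the indices, and this is completely resolved by the annihilator identification above.
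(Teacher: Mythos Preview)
Your argument is correct and is the standard Pontryagin-duality proof of this fact. Note, however, that the paper does not give its own proof of this lemma at all: it is stated with the citation \cite[\S3.1]{debacker2} and immediately used. So there is nothing in the paper to compare your approach against; you have simply supplied the details that the paper outsources to DeBacker's work, and your treatment of the annihilator identification (tracking the $+$ decorations via the discreteness of the Moy--Prasad jumps) is exactly the point one has to check.
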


The following lemma is an easy calculation.
\begin{lemma}\label{lem:ft}
    For functions $f:\underline{\mathfrak g}_{x,r}\to \mathbb C$
    $$\FT(L^{\mathfrak g}_{x,r}(f)) = C_{x,r}L^{\mathfrak g^*}_{x,-r}(\FT_{x,r}(f)),$$
    where $C_{x,r}$ is the positive constant $\mu^{\mathfrak g}(\mathfrak g_{x,r^+})q^{N_{x,r}/2}$ and $N_{x,r} = \dim_{\mathbb F_q}(\underline{\mathfrak g}_{x,r})$.
\end{lemma}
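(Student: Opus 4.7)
The plan is to unwind the definition of $\FT$ directly and match it against $\FT_{x,r}$. First I would write
\[
\FT(L^{\mathfrak g}_{x,r}(f))(\alpha) \;=\; \int_{\mathfrak g}\chi(\alpha(x))\,L^{\mathfrak g}_{x,r}(f)(x)\,dx \;=\; \int_{\mathfrak g_{x,r}}\chi(\alpha(x))\,f(\underline x)\,dx,
\]
using that $L^{\mathfrak g}_{x,r}(f)$ is supported on $\mathfrak g_{x,r}$ and depends only on the image modulo $\mathfrak g_{x,r^+}$. Then I would partition the domain into cosets of $\mathfrak g_{x,r^+}$: choosing a set of representatives $\{y\}$ for $\underline{\mathfrak g}_{x,r}$, the integral becomes
\[
\sum_{\underline y\in\underline{\mathfrak g}_{x,r}}f(\underline y)\,\chi(\alpha(y))\int_{\mathfrak g_{x,r^+}}\chi(\alpha(z))\,dz.
\]

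The key observation is that the inner integral is a character integral over the abelian group $\mathfrak g_{x,r^+}$. Since $\chi$ is trivial on $\mathfrak p_F$, this integral equals $\mu^{\mathfrak g}(\mathfrak g_{x,r^+})$ if $\alpha(\mathfrak g_{x,r^+})\subset \mathfrak p_F$ and vanishes otherwise. By the characterisation $\mathfrak g^{*}_{x,-r} = \{\alpha\in \mathfrak g^*: \alpha(\mathfrak g_{x,r^+})\subset \mathfrak p_F\}$ recorded earlier, the non-vanishing condition is exactly $\alpha\in \mathfrak g^*_{x,-r}$. Moreover, once this holds, both $f(\underline y)$ and $\chi(\alpha(y))$ depend only on $\underline y\in\underline{\mathfrak g}_{x,r}$ and on $\underline\alpha\in \underline{\mathfrak g}^*_{x,-r}$, via the perfect pairing between $\underline{\mathfrak g}_{x,r}$ and $\underline{\mathfrak g}^*_{x,-r}$.

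Putting these pieces together gives
\[
\FT(L^{\mathfrak g}_{x,r}(f))(\alpha) \;=\; \mu^{\mathfrak g}(\mathfrak g_{x,r^+})\cdot 1_{\mathfrak g^*_{x,-r}}(\alpha) \sum_{\underline y\in \underline{\mathfrak g}_{x,r}}f(\underline y)\,\chi(\langle\underline\alpha,\underline y\rangle),
\]
and the sum on the right is precisely $q^{N_{x,r}/2}\,\FT_{x,r}(f)(\underline\alpha)$ by the normalisation in the definition of the finite Fourier transform. Recognising the right-hand side as $C_{x,r}\,L^{\mathfrak g^*}_{x,-r}(\FT_{x,r}(f))(\alpha)$ with $C_{x,r}=\mu^{\mathfrak g}(\mathfrak g_{x,r^+})q^{N_{x,r}/2}$ finishes the proof.

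There is no real obstacle here; the only subtlety is bookkeeping the two different normalisations of Fourier transform (the continuous one on $\mathfrak g$ with Haar measure, and the finite one on $\underline{\mathfrak g}_{x,r}$ with the $q^{-N/2}$ prefactor) and verifying that the vanishing condition on $\alpha$ coincides with membership in $\mathfrak g^*_{x,-r}$, which is immediate from the stated characterisation of $\mathfrak g^*_{x,-r}$.
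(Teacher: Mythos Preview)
Your proof is correct and is exactly the direct computation the paper has in mind; the paper omits the argument entirely, stating only that the lemma is ``an easy calculation.'' There is nothing to add or compare.
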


\begin{definition}
    For $r\in \mathbb R$, let 
    $$\mathscr L_{x,r}:\mathcal N(\underline{\mathfrak g}_{x,r})\to \mathcal N(\mathfrak g)/G$$
    denote the lifting map of nilpotent orbits introduced by Debacker in \cite{debacker}.
    
    This map is constant on $\underline G_x$-orbits and for every orbit $\mathcal O\subseteq \mathcal N(\mathfrak g)/G$ there is an $x\in \mathscr B(\mathbb G,F)$ and an orbit $\underline {\mathcal O}$ of $\mathcal N(\underline{\mathfrak g}_{x,r})/\underline G_x$ that lifts to $\mathcal O$.

    We remark that the map can be extended to $\mathcal N(\underline{\mathfrak g}^{un}_{x,r})\to \mathcal N(\mathfrak g^{un})/G^{un}$ since $\Fun$ is the union of all finite unramified extensions of $F$.
\end{definition}

\subsection{Main theorem}

\begin{lemma}
    Let $(\pi,V)$ be a smooth irreducible representation of depth $\rho(\pi)$. 
    Let $r\ge \rho(\pi)$ and suppose $r>0$.
    
    For $x\in \mathscr B(\mathbb G,F)$ let $\underline \pi_{x,r}$ denote the finite dimensional $\underline G_{x,r}$-representation $V^{G_{x,r^+}}$.
    Let 
    $$\underline \theta_{\pi,x,r}:\underline G_{x,r}\to \mathbb C$$
    denote the character of this representation and let $\theta_{\pi,x,r} := \underline \theta_{\pi,x,r}\circ \exp_{x,r}$ denote the corresponding character of $\underline{\mathfrak g}_{x,r}$.
    Then $\theta_{\pi,x,r}$ admits a decomposition
    $$\theta_{\pi,x,r} = \sum_{\underline{\mathcal O^*}\in \underline{\mathfrak g}^*_{x,-r}}m_{x,r,\underline{\mathcal O}^*}(\pi)\chi_{\underline{\mathcal O^*}}$$
    into irreducible $\underline G_x$-invariant characters.
\end{lemma}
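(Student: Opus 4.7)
The plan is to verify that $\theta_{\pi,x,r}$ lies in the space $C(\underline{\mathfrak g}_{x,r}/\underline G_x,+)$ of $\underline G_x$-invariant additive characters of the finite abelian group $\underline{\mathfrak g}_{x,r}$, and then invoke Lemma \ref{lem:coadjcharexp}(2) together with Lemma \ref{lem:irrchar} to obtain the desired decomposition into the irreducible characters $\chi_{\underline{\mathcal O}^*}$.

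First I would check that $\underline{\pi}_{x,r}$ is a well-defined finite-dimensional representation of $\underline{G}_{x,r}$. Admissibility of $\pi$ makes $V^{G_{x,r^+}}$ finite dimensional; since $G_{x,r}$ normalises $G_{x,r^+}$ it preserves this subspace, and $G_{x,r^+}$ acts trivially by definition, so the action factors through $\underline{G}_{x,r}$. Because $r>0$, the group $\underline{G}_{x,r}$ is identified via $\exp_{x,r}$ with the finite abelian group $\underline{\mathfrak g}_{x,r}$, so $\theta_{\pi,x,r}$ is the genuine character of a finite-dimensional representation of this abelian group, and hence an element of $C(\underline{\mathfrak g}_{x,r},+)$.

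Next I would establish $\underline G_x$-invariance. The group $G_{x,0}$ acts on $V$ and normalises both $G_{x,r}$ and $G_{x,r^+}$, so it preserves $V^{G_{x,r^+}}$ and its action on $\underline{\mathfrak g}_{x,r}$ (via conjugation transported through $\exp_{x,r}$) coincides with the adjoint action. This makes $\underline{\pi}_{x,r}$ a representation of the semidirect product $\underline G_x \ltimes \underline{\mathfrak g}_{x,r}$ (the subgroup $G_{x,0^+}$ acts trivially on characters of $\underline{\mathfrak g}_{x,r}$, as it is unipotent and normal with abelian quotient; concretely, a trace of a single element is invariant under conjugation). Thus the restriction of the trace to $\underline{\mathfrak g}_{x,r}$ is $\underline G_x$-invariant.

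Combining these two observations, $\theta_{\pi,x,r}\in C(\underline{\mathfrak g}_{x,r}/\underline G_x,+)$, and Lemma \ref{lem:coadjcharexp}(2) together with Lemma \ref{lem:irrchar} yields the decomposition
\[
\theta_{\pi,x,r} = \sum_{\underline{\mathcal O}^*\in\underline{\mathfrak g}^*_{x,-r}/\underline G_x} m_{x,r,\underline{\mathcal O}^*}(\pi)\,\chi_{\underline{\mathcal O}^*},
\]
with $m_{x,r,\underline{\mathcal O}^*}(\pi)\in\mathbb N_0$. There is no genuine obstacle in this argument; the only point requiring care is checking that $\underline G_x$ really does act on $\underline{\pi}_{x,r}$ compatibly with its adjoint action on $\underline{\mathfrak g}_{x,r}$ via $\exp_{x,r}$, which follows from the functoriality of the (mock) exponential map under conjugation assumed in Hypothesis \ref{hyp:exp}.
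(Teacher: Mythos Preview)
Your proposal is correct and follows essentially the same route as the paper: show that $\theta_{\pi,x,r}$ lies in $C(\underline{\mathfrak g}_{x,r}/\underline G_x,+)$ by using that $G_{x,0}$ normalises $G_{x,r}$ and $G_{x,r^+}$ while $G_{x,0^+}$ acts trivially on the quotient, then apply Lemmas~\ref{lem:coadjcharexp} and~\ref{lem:irrchar}. The paper's proof is simply terser, invoking directly that $\underline G_{x,r}$ is normal in $G_{x,0}/G_{x,r^+}$ with $G_{x,0^+}/G_{x,r^+}$ acting trivially (i.e.\ $[G_{x,0^+},G_{x,r}]\subseteq G_{x,r^+}$), which is a cleaner justification than your parenthetical about unipotence and traces.
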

\begin{proof}
    Since $\underline G_{x,r}$ is normal in $G_{x,0}/G_{x,r^+}$ and $G_{x,0^+}/G_{x,r^+}$ acts trivially on $\underline G_{x,r}$, we get that $\theta_{\pi,x,r}$ is a $\underline G_x$-invariant character of $\underline{\mathfrak g}_{x,r}$.
    The result then follows by lemma \ref{lem:coadjcharexp} and lemma \ref{lem:irrchar}.
    
\end{proof}

\begin{hypothesis}\label{hyp:bilinear-form}
    The Lie algebra $\mathfrak g$ admits an $F$-valued, non-degenerate, $G$-equivariant, symmetric, bilinear form 
    $$B(-,-):\mathfrak g\times \mathfrak g\to F.$$
    We refer to \cite[Proposition 4.1]{adler-roche} for a set of sufficient conditions for this to hold.
    
    Such a form $B$, induces an isomorphism
    $$\eta_B:\mathfrak g^*\to \mathfrak g$$
    that restricts to an isomorphism
    $$\eta_B:\mathfrak g^*_{x,r} \to \mathfrak g_{x,r}$$
    and descends to an isomorphism
    $$\eta_{B,x,r}:\underline{\mathfrak g}^*_{x,r}\to \underline{\mathfrak g}_{x,r}.$$
    Under this identification $\FT^*_{x,-r} = \FT_{x,-r}$.
    We also use $\eta_B$ to yield lifting maps
    \begin{align*}
        &\mathscr L_{x,r}:\mathcal N(\underline{\mathfrak g}^{*}_{x,r})/\underline G_x\to \mathcal N(\mathfrak g^*)/G\\
        &\mathscr L^{un}_{x,r}:\mathcal N(\underline{\mathfrak g}^{un,*}_{x,r})/\underline G^{un}_x\to \mathcal N(\mathfrak g^{un,*})/G^{un}.
    \end{align*}
\end{hypothesis}

\begin{definition}\label{d:test-function}
    Let $r>0$.
    For $\underline{\mathcal O}^*\in \mathcal N(\underline{\mathfrak g}^{*}_{x,-r})$, define
    \begin{align}
        f^{\mathfrak g}_{x,r,\underline{\mathcal O}^*} &:= L^{\mathfrak g}_{x,r}(\Gamma_{\underline {\mathcal O}^*}) \in C_c^\infty(\mathfrak g), \\
        f_{x,r,\underline{\mathcal O}^*} &:= L^{G}_{x,r}(\Gamma_{\underline {\mathcal O}^*}\circ \exp_{x,r}^{-1}) \in C_c^\infty(G).
    \end{align}
    Note that 
    $$f_{x,r,\underline{\mathcal O}^*}\circ \exp = f^{\mathfrak g}_{x,r,\underline{\mathcal O}^*}$$
    on $\mathfrak g_{r}$.
\end{definition}

\begin{lemma}
    \label{lem:eval}
    Let $r>0$ and $\underline{\mathcal O}^*\in \mathcal N(\underline{\mathfrak g}^{*}_{x,-r})$.
    Then
    \begin{enumerate}
        \item $\supp(f_{x,r,\underline{\mathcal O}^*}) \subset G_{r^+}$,
        \item for any smooth admissible representation $(\pi,V)$ of $G$,
        \begin{align*}
            \Theta_\pi(f_{x,r,\underline{\mathcal O}^*}) &= \mu(G_{x,r})\langle \theta^*_{\pi,x,r},\Gamma_{\underline{\mathcal O}^*}\rangle_{\underline{\mathfrak g}_{x,r}}
        \end{align*}
        where $\theta_{\pi,x,r}^*$ denotes the complex conjugate of $\theta_{\pi,x,r}$.
    \end{enumerate}
\end{lemma}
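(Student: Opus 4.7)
The plan is to handle the two parts separately. For (1), the support statement, I would use that the construction of $f_{x,r,\underline{\mathcal O}^*}$ via $L^G_{x,r}$ places its support inside $G_{x,r}$, and the further constraint $\supp(\Gamma_{\underline{\mathcal O}^*}) \subset \mathcal N(\underline{\mathfrak g}_{x,r})$ from Proposition \ref{prop:gggg}.(2) forces $g \in \supp(f_{x,r,\underline{\mathcal O}^*})$ to satisfy $\exp_{x,r}^{-1}(\underline g) \in \mathcal N(\underline{\mathfrak g}_{x,r})$. The remaining point, that a representative $X \in \mathfrak g_{x,r}$ with nilpotent image in $\underline{\mathfrak g}_{x,r}$ must lie in $\mathfrak g_{y,r^+}$ for some $y \in \mathscr B(\mathbb G,F)$, is precisely one of the characterizations underlying DeBacker's definition of the lifting map $\mathscr L_{x,r}$; transporting through $\exp$ and using Hypothesis \ref{hyp:exp} gives $g \in G_{r^+}$.

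For (2), I would compute $\Theta_\pi(f_{x,r,\underline{\mathcal O}^*}) = \mathrm{tr}(\pi(f_{x,r,\underline{\mathcal O}^*}))$ directly. Since $f_{x,r,\underline{\mathcal O}^*}$ is supported on $G_{x,r}$ and constant on $G_{x,r^+}$-cosets on both sides, the integral collapses to
\[
\pi(f_{x,r,\underline{\mathcal O}^*}) \;=\; \mu(G_{x,r^+})\sum_{\bar g\in \underline G_{x,r}} f_{x,r,\underline{\mathcal O}^*}(\bar g)\,\pi(g)\, p_{x,r^+},
\]
where $p_{x,r^+}$ is the projection onto $V^{G_{x,r^+}}$. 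Normality of $G_{x,r^+}$ inside $G_{x,r}$ guarantees that each $\pi(g)$ preserves $V^{G_{x,r^+}}$, so by smooth admissibility the trace is computed on this finite-dimensional subspace and equals $\underline\theta_{\pi,x,r}(\bar g)$.

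Transferring the sum from $\underline G_{x,r}$ to $\underline{\mathfrak g}_{x,r}$ via $\exp_{x,r}$ turns $\underline\theta_{\pi,x,r}(\bar g)$ into $\theta_{\pi,x,r}(\bar X)$ and $f_{x,r,\underline{\mathcal O}^*}(\bar g)$ into $\Gamma_{\underline{\mathcal O}^*}(\bar X)$, yielding
\[
\Theta_\pi(f_{x,r,\underline{\mathcal O}^*}) \;=\; \mu(G_{x,r^+})\sum_{\bar X\in \underline{\mathfrak g}_{x,r}} \theta_{\pi,x,r}(\bar X)\,\Gamma_{\underline{\mathcal O}^*}(\bar X).
\]
Finally, using $|\underline G_{x,r}| = q^{N_{x,r}}$ and hence $\mu(G_{x,r}) = q^{N_{x,r}}\mu(G_{x,r^+})$, and unwinding the definition of the inner product (the two complex conjugations in $\theta^*_{\pi,x,r}$ and in $\langle -,-\rangle_{\underline{\mathfrak g}_{x,r}}$ cancel), the right-hand side becomes exactly $\mu(G_{x,r})\langle \theta^*_{\pi,x,r},\Gamma_{\underline{\mathcal O}^*}\rangle_{\underline{\mathfrak g}_{x,r}}$.

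The main subtlety I expect is the support statement (1): one must confirm that the DeBacker lift indeed lands in $\mathfrak g_{r^+}$, which relies on Hypothesis \ref{hyp:p} and the structural results in \cite{debacker}. The rest is a direct unwinding of definitions once the bi-$G_{x,r^+}$-invariance of $f_{x,r,\underline{\mathcal O}^*}$ and the compatibility of Haar measure with the finite quotient $\underline G_{x,r}$ are used.
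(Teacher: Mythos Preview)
Your proposal is correct and follows essentially the same approach as the paper's proof. For part (1), the paper cites \cite[Proposition 6.3]{moyprasad} directly (the original Moy--Prasad result that degenerate cosets lie in $\mathfrak g_{r^+}$) rather than routing through DeBacker's lifting framework, but the underlying fact is the same; for part (2), your computation is exactly the paper's, just with more of the intermediate steps (projection onto $V^{G_{x,r^+}}$, normality, measure normalization) spelled out where the paper compresses everything into two displayed lines.
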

\begin{proof}
    Since $\Gamma_{\underline{\mathcal O}^*}$ has support contained in $\mathcal N(\underline{\mathfrak g}_{x,r})$, by \cite[Proposition 6.3]{moyprasad}, the function $f_{x,r,\underline{\mathcal O}^*}$ is supported in $G_{r+}$.

    For $(2)$ note that
    \begin{align*}
        \Theta_\pi(f_{x,r,\underline{\mathcal O}^*}) &= \mu(G_{x,r^+})\sum_{\underline g\in \underline{G}_{x,r}}\underline {\theta}_{\pi,x,r}(\underline g)\Gamma_{\underline{\mathcal O}^*}(\exp_{x,r}^{-1}(\underline g)) \\
        &= \mu(G_{x,r})\langle\theta^*_{\pi,x,r},\Gamma_{\underline{\mathcal O}^*}\rangle_{\underline{\mathfrak g}_{x,r}}.
    \end{align*}
\end{proof}

\begin{lemma}\label{lem:conj}
    Let $x\in \mathscr B(\mathbb G,F)$ and
    $$E\in \mathfrak g_{x,r}, \quad H\in \mathfrak g_{x,0}, \quad F\in \mathfrak g_{x,-r}$$
    be an $\mathfrak {sl}_2$-triple.
    Let $r_0 = r < r_1 < r_2 < \dots$ denote the jumps in the Moy-Prasad filtrations $\mathfrak g_{x,s}$ for $s\ge r$.
    If 
    $$X \in E + \mathfrak g_{x,r_0}(\le0) + \mathfrak g_{x,r_i}$$
    where $i>0$, then there exists $g\in G_{x,r_i-r_0}$ such that
    $$\Ad(g)(X) = E + \mathfrak g_{x,r_0}(\le0) + \mathfrak g_{x,r_{i+1}}.$$
\end{lemma}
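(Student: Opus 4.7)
My plan is to construct $g$ as a finite product of exponentials of elements in $\mathfrak g_{x,r_i - r_0}$, killing the positive-$H$-weight components of the ``error term'' one $H$-weight at a time, from the top down, modulo $\mathfrak g_{x,r_{i+1}}$.

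First, since $H \in \mathfrak g_{x,0}$ is semisimple with integer eigenvalues (hypothesis \ref{hyp:graded}(3)), $\ad(H)$ acts semisimply on each Moy-Prasad lattice $\mathfrak g_{x,s}$, giving a decomposition $\mathfrak g_{x,s} = \bigoplus_j \mathfrak g_{x,s}(j)$ compatible with the notation $\mathfrak g_{x,r_0}(\le 0)$. I would write $X = E + X_{\mathrm{low}} + Y$ with $X_{\mathrm{low}} \in \mathfrak g_{x,r_0}(\le 0)$ and $Y \in \mathfrak g_{x,r_i}$, and decompose $Y$ by $H$-weight. Since $\mathfrak g_{x,r_i}(\le 0) \subseteq \mathfrak g_{x,r_0}(\le 0)$, the non-positive weight part of $Y$ may be absorbed into $X_{\mathrm{low}}$, so I may assume $Y \in \mathfrak g_{x,r_i}(>0)$.

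I then proceed by induction on the maximum $H$-weight $M$ of $Y$ modulo $\mathfrak g_{x,r_{i+1}}$; the base case $M \le 0$ forces $Y \equiv 0 \pmod{\mathfrak g_{x,r_{i+1}}}$ and we take $g = 1$. For $M \ge 1$ the key ingredient is the $\mathfrak{sl}_2$-theoretic surjectivity
\[\ad(E)\colon \mathfrak g_{x,r_i - r_0}(M - 2) \twoheadrightarrow \mathfrak g_{x,r_i}(M) / \mathfrak g_{x,r_{i+1}}(M),\]
lifting the classical fact (on a finite-dimensional $\mathfrak{sl}_2$-module, $\ad(E)$ sends the weight $\ge -1$ subspace onto the weight $\ge 1$ subspace) to the Moy-Prasad-graded level via the adapted cocharacter $\lambda$ of hypothesis \ref{hyp:graded}(3), which preserves $\mathfrak g_{x,\bullet}$ and induces the same weight grading as $\ad(H)$. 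Given this, I pick $Z \in \mathfrak g_{x,r_i - r_0}(M-2)$ with $[Z, E] \equiv -Y_M \pmod{\mathfrak g_{x,r_{i+1}}}$, where $Y_M$ is the weight-$M$ part of $Y$.

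The final step is to expand
\[\Ad(\exp(Z))(X) = X + [Z, X] + \tfrac{1}{2}[Z, [Z, X]] + \cdots.\]
All iterated brackets $[Z,[Z,X]],\ldots$ lie in $\mathfrak g_{x,2r_i - r_0} \subseteq \mathfrak g_{x,r_{i+1}}$ since $r_i > r_0$, and similarly $[Z, Y] \in \mathfrak g_{x,r_{i+1}}$; hence modulo $\mathfrak g_{x,r_{i+1}}$,
\[\Ad(\exp(Z))(X) \equiv E + X_{\mathrm{low}} + (Y - Y_M) + [Z, X_{\mathrm{low}}].\]
Now $[Z, X_{\mathrm{low}}]$ has $H$-weight $\le M - 2$ while $Y - Y_M$ has $H$-weight $\le M - 1$; after absorbing the non-positive weight parts into a new $X'_{\mathrm{low}}$, the new error in $\mathfrak g_{x,r_i}(>0)$ has maximum weight $\le M - 1$, and I iterate. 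The $H$-weights on $\mathfrak g$ are bounded by $p-3$ (hypothesis \ref{hyp:graded}(3)), so the induction terminates, and the resulting product $g$ lies in the group $G_{x,r_i - r_0}$. The main obstacle is the Moy-Prasad-graded surjectivity of $\ad(E)$ displayed above: one must verify that the $\ad(H)$-eigenspace decomposition is compatible with $\mathfrak g_{x,\bullet}$ (via the adapted cocharacter fixing $x$ up to a central cocharacter) and that $\ad(E)$ induces a surjection on each weight-graded piece modulo $\mathfrak g_{x,r_{i+1}}$, which is standard but requires careful bookkeeping with the $p$-adic structure.
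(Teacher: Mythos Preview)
Your proof is correct and follows essentially the same route as the paper: both arguments reduce modulo $\mathfrak g_{x,r_{i+1}}$, use the $\mathfrak{sl}_2$-theoretic surjectivity of $\ad(\underline E)$ onto positive weight spaces in the graded Lie algebra $\overline{\mathfrak g}_x$, and induct on the highest $H$-weight of the error term. The only organisational difference is that the paper constructs a single element $u\in\underline{\mathfrak g}_{x,r_i-r_0}$ with $[\underline E+\underline Z,u]\equiv \underline{W(>0)}\pmod{\underline{\mathfrak g}_{x,r_i}(\le 0)}$ and applies $\exp$ once, whereas you iterate $\exp$'s weight-by-weight; since all higher commutators land in $\mathfrak g_{x,r_{i+1}}$ these are equivalent.
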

\begin{proof}
    Let $\lambda$ be an adapted cocharacter for the $\mathfrak {sl}_2$-triple $\underline E,\underline H,\underline F$. 
    It induces a bi-grading 
    $$\bigoplus_{s\in \mathbb R/\mathbb Z,k\in \mathbb Z} \underline{\mathfrak g}_{x,s}(k)$$
    coming from the weights of $\Ad(\lambda)$.
    The action of $\ad(H)$ also induces a $\mathbb Z$-grading on $\mathfrak g_{x,s}$ for all $s$ and the $k$-weight space surjects onto $\underline{\mathfrak g}_{x,s}(k)$.
    
    Let $X = E+Z+W$ where $Z\in \mathfrak g_{x,r_0}(\le0)$ and $W\in \mathfrak g_{x,r_i}$.
    
    Write $W = W(\le 0) + W(>0)$ where $W(\le 0)\in \mathfrak g_{x,r_i}(\le 0)$ and $W(>0)\in \mathfrak g_{x,r_i}(>0)$.

    By a straightforward inductive argument on the highest non-zero weight of $\underline{W(>0)}$ there exists a $u\in \underline{\mathfrak g}_{x,r_i-r_0}$ such that
    $$[\underline X+ \underline Z,u] = \underline{W(>0)} \pmod{\underline{\mathfrak g}_{x,r_i}(\le0)}.$$
    Let $U\in \mathfrak g_{x,r_i-r_0}$ be a lift of $u$.
    Then $[U,W] \in \mathfrak g_{x,r_{i+1}}$ and so
    \begin{align*}
        \Ad(\exp(U))(E+Z+W) + \mathfrak g_{x,r_{i+1}}&= E+Z+W + [U,X+Z+W] + \mathfrak g_{x,r_{i+1}} \\
        &= \underline{E} + \underline Z + \underline{W(\le 0)} + \mathfrak g_{x,r_{i+1}} \\
        &\in E+ \mathfrak g_{x,r_0}(\le 0) + \mathfrak g_{x,r_{i+1}}.
    \end{align*}
    Since $\exp(U)\in G_{x,r_i-r_0}$ this provides the sought after element $g\in G_{x,r_i-r_0}$.
\end{proof}

\begin{lemma}
    \label{lem:liftingorder}
    Let $x\in \mathscr B(G,F)$ and
    $$E\in \mathfrak g_{x,r}, \quad H\in \mathfrak g_{x,0}, \quad F\in \mathfrak g_{x,-r}$$
    be an $\mathfrak {sl}_2$-triple.
    If $X\in \mathfrak g_{x,r}$ and its image $\underline X$ in $\underline{\mathfrak g}_{x,r}$ lies in 
    $$\underline {E} + \underline{\mathfrak g}_{x,r}(\le 0)$$
    then $X$ is conjugate to an element of $E+ \mathfrak g_{x,r}(\le 0)$.
\end{lemma}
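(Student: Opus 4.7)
The plan is to iterate Lemma \ref{lem:conj} and pass to a limit in the Moy-Prasad filtration. First, I would fix an adapted cocharacter $\lambda$ for the $\mathfrak{sl}_2$-triple $(E,H,F)$ and observe that the weight decomposition of $\mathfrak g$ under $\Ad\circ\lambda$ restricts to each $\mathfrak g_{x,r_i}$, so $\mathfrak g_{x,r_0}(\le 0)$ surjects onto $\underline{\mathfrak g}_{x,r}(\le 0)$. The hypothesis $\underline X\in \underline E+\underline{\mathfrak g}_{x,r}(\le 0)$ then lifts to
\[X\in E+\mathfrak g_{x,r_0}(\le 0)+\mathfrak g_{x,r_1},\]
which is exactly the starting condition of Lemma \ref{lem:conj}.

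Next, I would iterate the lemma. Setting $X_0:=X$, it produces for each $i\ge 1$ an element $g_i\in G_{x,r_i-r_0}$ such that $X_i:=\Ad(g_i)(X_{i-1})$ lies in $E+\mathfrak g_{x,r_0}(\le 0)+\mathfrak g_{x,r_{i+1}}$. Writing $h_n:=g_n g_{n-1}\cdots g_1$, the successive ratios $h_{n+1}h_n^{-1}=g_{n+1}\in G_{x,r_{n+1}-r_0}$ lie in a chain of subgroups shrinking to the identity. Since $G_{x,r_1-r_0}$ is a compact (hence complete) subgroup of $G$, the sequence $(h_n)$ is Cauchy and converges to some $g\in G_{x,r_1-r_0}$.

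Finally, I would verify that $\Ad(g)(X)\in E+\mathfrak g_{x,r_0}(\le 0)$. Using the commutator relation $[G_{x,s},\mathfrak g_{x,t}]\subset \mathfrak g_{x,s+t}$, a telescoping argument shows $\Ad(g)(X)-X_n\in \mathfrak g_{x,r_{n+1}}$ for every $n$, and hence $\Ad(g)(X)\in E+\mathfrak g_{x,r_0}(\le 0)+\mathfrak g_{x,r_{n+1}}$ for every $n$. Since $\mathfrak g_{x,r_0}(\le 0)$ is closed and $\mathfrak g_{x,r_n}\to 0$, the intersection $\bigcap_n\bigl(\mathfrak g_{x,r_0}(\le 0)+\mathfrak g_{x,r_{n+1}}\bigr)$ collapses to $\mathfrak g_{x,r_0}(\le 0)$, giving the claim. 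The substantive content is already packaged inside Lemma \ref{lem:conj}; beyond that, the only thing to watch is the routine $p$-adic convergence of the partial products and the closed-intersection step, so there is no real obstacle.
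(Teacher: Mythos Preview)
Your proposal is correct and follows essentially the same approach as the paper: iterate Lemma~\ref{lem:conj} to produce a sequence of conjugating elements whose partial products converge in the Moy--Prasad topology, then use that $\mathfrak g_{x,r_0}(\le 0)$ is closed to conclude. Your write-up is more explicit about the initial lifting step and the convergence details, but the argument is the same.
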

\begin{proof}
    Repeatedly applying lemma \ref{lem:conj} yields sequences $(g_i\in G_{r_i-r_0})_{i=0}^\infty$ and $(Z_i\in \mathfrak g_{x,r_0}(\le0))_{i=0}^\infty$ such that
    $$\Ad(g_0\cdots g_i)X - Z_i \in \mathfrak g_{x,r_{i+1}}$$
    Since $(G_{x,r_i-r_0})_{i=0}^\infty$ form a neighbourhood system around the identity, $g_0\cdots g_i$ converges to some $\tilde g$.
    Since $\mathfrak g_{x,r_0}(\le0)$ is a closed subset of $\mathfrak g_{x,r_0}$ we moreover have that $\Ad(\tilde g)(X) = \lim_{i\to \infty}Z_i\in \mathfrak g_{x,r_0}(\le 0)$ as required.
\end{proof}

\begin{corollary}\label{cor:order-lift}
    Let $l\in \mathbb R$.
    \begin{enumerate}
        \item Let $\underline {\mathcal O}_1,\underline{\mathcal O}_2\in \mathcal N(\underline{\mathfrak g}_{x,l})/\underline G_x$. If $\underline{\mathcal O}_2\cap \Sigma_{\underline {\mathcal O}_1}\ne \emptyset$, then $\mathscr L_{x,l}(\underline {\mathcal O}_1)\le \mathscr L_{x,l}(\underline{\mathcal O}_2)$.
        \item Let $\underline {\mathcal O}_1,\underline{\mathcal O}_2\in \mathcal N(\underline{\mathfrak g}^{un}_{x,l})/\underline G^{un}_x$. If $\underline {\mathcal O}_1 \subset \overline{\underline{\mathcal O}_2}$, then $\mathscr L^{un}_{x,l}(\underline {\mathcal O}_1)\le \mathscr L^{un}_{x,l}(\underline{\mathcal O}_2)$.
    \end{enumerate}
\end{corollary}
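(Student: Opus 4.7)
Plan: The strategy is to combine Lemma \ref{lem:liftingorder} with the cocharacter contraction argument from the proof of Proposition \ref{prop:slice}.(1). Part (1) comes almost directly from these two ingredients, and part (2) reduces to part (1) after invoking Proposition \ref{prop:slice}.(1) over $\overline{\mathbb F}_q$ to convert the Zariski-closure hypothesis into an intersection-with-slice hypothesis.

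For (1), I would first fix a graded $\mathfrak{sl}_2$-triple $(\underline E,\underline H,\underline F)$ with $\underline E\in \underline{\mathcal O}_1$ used to define $\Sigma_{\underline{\mathcal O}_1}$, and lift it (using hypothesis \ref{hyp:p}) to an honest $\mathfrak{sl}_2$-triple $(E,H,F)\in \mathfrak g_{x,l}\times \mathfrak g_{x,0}\times \mathfrak g_{x,-l}$. Next, pick a nilpotent $X_2\in \mathfrak g_{x,l}$ representing $\mathscr L_{x,l}(\underline{\mathcal O}_2)$ whose image in $\underline{\mathfrak g}_{x,l}$ lies in $\underline{\mathcal O}_2\cap \Sigma_{\underline{\mathcal O}_1}$ (possible after translating within the $\underline G_x$-orbit). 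Lemma \ref{lem:liftingorder} then produces $g\in G$ with $\Ad(g)X_2=E+w$ for some $w\in \mathfrak g_{x,l}(\le 0)$. The twisted cocharacter $\widetilde\lambda(t)=(\lambda(t^{-1}),t)\in \widetilde G$ associated to an adapted cocharacter $\lambda$ for $(E,H,F)$ satisfies $\widetilde\lambda(t).(E+w)\to E$ as $t\to 0$ (exactly as in the proof of Proposition \ref{prop:slice}.(1)), so $E\in \overline{\widetilde G.X_2}$. Because $X_2$ is nilpotent, the $\mathbb G_m$ factor preserves the $G$-orbit of $X_2$ via its own $\mathfrak{sl}_2$-cocharacter, so $\widetilde G.X_2=G.X_2$ and hence $G.E\subseteq \overline{G.X_2}=\overline{\mathscr L_{x,l}(\underline{\mathcal O}_2)}$. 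Finally, since $E$ is a nilpotent lift of $\underline E\in \underline{\mathcal O}_1$ lying in $\mathfrak g_{x,l}$, the minimality characterisation of DeBacker's lift yields $\mathscr L_{x,l}(\underline{\mathcal O}_1)\le G.E$, and combining the inequalities gives the claim.

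For (2), note that $\underline{\mathcal O}_1\subseteq \overline{\underline{\mathcal O}_2}$ together with the nilpotency of $\underline{\mathcal O}_1$ places $\underline{\mathcal O}_1$ inside $\overline{\cone}(\underline{\mathcal O}_2)$. Applying Proposition \ref{prop:slice}.(1) to the graded Lie algebra $\overline{\mathfrak g}^{un}_x$ over $\overline{\mathbb F}_q$ then produces an element of $\underline{\mathcal O}_2\cap \Sigma_{\underline{\mathcal O}_1}$. From this point the argument of (1) transplants verbatim to the $F^{un}$-setting with $G$ replaced by $G^{un}$: Lemma \ref{lem:liftingorder} is purely algebraic and applies over $F^{un}$, the $\mathfrak{sl}_2$-lifting under hypothesis \ref{hyp:p} is still available, and the degeneration $\widetilde\lambda(t).(X_1+w)\to X_1$ takes place inside $\mathfrak g^{un}_{x,l}$.

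The subtle point I anticipate is the $\mathfrak{sl}_2$-lifting step, which requires that any graded $\mathfrak{sl}_2$-triple in $\overline{\mathfrak g}^{un}_x$ can be promoted to one in $\mathfrak g$ (respectively $\mathfrak g^{un}$) compatible with the Moy-Prasad filtration; this is standard under hypothesis \ref{hyp:p} but needs to be cited cleanly. The other crucial input is the closure-order characterisation of the DeBacker lift, namely that $\mathscr L_{x,l}(\underline{\mathcal O})$ is the unique minimum under the closure order among nilpotent orbits admitting a representative in $\mathfrak g_{x,l}$ whose image is in $\underline{\mathcal O}$; this is precisely what converts $G.E\le \mathscr L_{x,l}(\underline{\mathcal O}_2)$ into $\mathscr L_{x,l}(\underline{\mathcal O}_1)\le \mathscr L_{x,l}(\underline{\mathcal O}_2)$, completing both parts.
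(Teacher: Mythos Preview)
Your proposal is correct and follows essentially the same route as the paper: the paper's proof of (1) cites \cite[Corollary 4.3.2]{debacker} (the $\mathfrak{sl}_2$-lift), Lemma \ref{lem:liftingorder}, and \cite[Proposition 3.12]{barmoy} (precisely your cocharacter-contraction step showing $G.E\le G.X_2$), and for (2) reduces to (1) via Proposition \ref{prop:slice}.(1) exactly as you do. One cosmetic point: the notation $\widetilde G$ belongs to the finite-field setting of Section~\ref{sec:git}, so in the $p$-adic argument you should phrase the degeneration directly as $\Ad(\lambda(t)\mu(t^{-1}))(E+w)\to E$ with $\mu$ an adapted cocharacter for $X_2$, which is the content of \cite[Proposition 3.12]{barmoy}; also note that \cite[Corollary 4.3.2]{debacker} actually gives $G.E=\mathscr L_{x,l}(\underline{\mathcal O}_1)$ rather than merely $\le$.
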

\begin{proof}
    $(1)$ follows from \cite[Corollary 4.3.2]{debacker}, Lemma \ref{lem:liftingorder}, and \cite[Proposition 3.12]{barmoy}.

    $(2)$ follows from proposition \ref{prop:slice}.(1) and part $(1)$.
\end{proof}

\begin{proposition}
    \label{prop:orbitalint}
    Let $x\in \mathscr B(\mathbb G,F), r>0,\underline{\mathcal O}^*\in \mathcal N(\underline{\mathfrak g}^{*}_{x,-r})/\underline G_x$ and $\mathcal O'^*\in \mathcal N(\mathfrak g)/G$.
    We have
    \begin{enumerate}
        \item $\FT(\mu_{\mathcal O'^*})(f^{\mathfrak g}_{x,r,\underline{\mathcal O}^*}) \ne 0$ only if $\mathscr L_{x,-r}(\underline{\mathcal O}^*) \le \mathcal O'^*$;
        \item $\FT(\mu_{\mathcal O'^*})(f^{\mathfrak g}_{x,r,\underline{\mathcal O}^*}) \ne 0$ if $\mathscr L_{x,-r}(\underline{\mathcal O}^*) = \mathcal O'^*$.
    \end{enumerate}
\end{proposition}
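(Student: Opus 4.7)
The plan is to push both sides of the pairing through the Fourier transform and reduce to a support computation for $\FT_{x,r}(\Gamma_{\underline{\mathcal O}^*})$ combined with lemma \ref{lem:liftingorder}. By lemma \ref{lem:ft} applied to $f^{\mathfrak g}_{x,r,\underline{\mathcal O}^*}=L^{\mathfrak g}_{x,r}(\Gamma_{\underline{\mathcal O}^*})$ one has $\FT(f^{\mathfrak g}_{x,r,\underline{\mathcal O}^*})=C_{x,r}\,L^{\mathfrak g^*}_{x,-r}(\FT_{x,r}(\Gamma_{\underline{\mathcal O}^*}))$, and formula (\ref{eq:ftgggg}) from the proof of proposition \ref{prop:gggg} exhibits $\FT_{x,r}(\Gamma_{\underline{\mathcal O}^*})$ as a non-negative real function on $\underline{\mathfrak g}^*_{x,-r}$ whose support is exactly $\underline G_x.\Sigma_{\underline{\mathcal O}^*}$. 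Unfolding $\FT(\mu_{\mathcal O'^*})(f^{\mathfrak g}_{x,r,\underline{\mathcal O}^*}) = \mu_{\mathcal O'^*}(\FT(f^{\mathfrak g}_{x,r,\underline{\mathcal O}^*}))$ and using that the lift is locally constant with open compact support in $\mathfrak g^*$, the pairing is a non-negative real number which vanishes if and only if $\mathcal O'^*$ is disjoint from the open set $\{\beta\in\mathfrak g^*_{x,-r}:\underline\beta\in\underline G_x.\Sigma_{\underline{\mathcal O}^*}\}$.

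For part (1), suppose the pairing is non-zero and pick $\beta\in\mathcal O'^*\cap\mathfrak g^*_{x,-r}$ with $\underline\beta\in\underline G_x.\Sigma_{\underline{\mathcal O}^*}$; after $G_{x,0}$-conjugation I may assume $\underline\beta\in\Sigma_{\underline{\mathcal O}^*}$. DeBacker's characterisation of $\mathscr L_{x,-r}$ supplies a nilpotent $\tilde\alpha\in\mathscr L_{x,-r}(\underline{\mathcal O}^*)\cap\mathfrak g^*_{x,-r}$ whose reduction is the chosen representative of $\underline{\mathcal O}^*$; I then lift the adapted $\mathfrak{sl}_2$-triple and cocharacter $\lambda$ for $\eta_B(\tilde\alpha)$ to $\mathfrak g$. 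Lemma \ref{lem:liftingorder}, applied to $\eta_B(\beta)$ through $\eta_B$, yields $g\in G$ with $g.\beta\in\tilde\alpha+\mathfrak g^*_{x,-r}(\le 0)$. Exactly as in the $(\Leftarrow)$ direction of proposition \ref{prop:slice}.(1), the one-parameter subgroup $\widetilde\lambda(t) = (\lambda(t),t)$ then contracts $g.\beta$ to $\tilde\alpha$ as $t\to 0$, so $\tilde\alpha\in\overline{\mathcal O'^*}$ (using that nilpotent orbits are $\mathbb G_m$-stable), whence $\mathscr L_{x,-r}(\underline{\mathcal O}^*)\le\mathcal O'^*$.

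Part (2) is immediate from the same support description: if $\mathscr L_{x,-r}(\underline{\mathcal O}^*) = \mathcal O'^*$ then DeBacker's characterisation provides a nilpotent $\tilde\alpha\in\mathcal O'^*\cap\mathfrak g^*_{x,-r}$ with $\underline{\tilde\alpha}\in\underline{\mathcal O}^*\subset\Sigma_{\underline{\mathcal O}^*}$, placing $\tilde\alpha$ in the support of $\FT(f^{\mathfrak g}_{x,r,\underline{\mathcal O}^*})$; the non-negative locally constant integrand is strictly positive on an open neighbourhood of $\tilde\alpha$ in $\mathcal O'^*$, so the orbital integral against $\mu_{\mathcal O'^*}$ is non-zero.

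The main technical obstacle is the lifting step in part (1): one must arrange an $F$-rational $\mathfrak{sl}_2$-triple in $\mathfrak g$ and an adapted cocharacter of $G$ that reduce correctly to the chosen triple in $\underline{\mathfrak g}^*_{x,-r}$ and yield a bigrading of $\mathfrak g^*_{x,-r}$ compatible with the definition of $\Sigma_{\underline{\mathcal O}^*}$, so that both lemma \ref{lem:liftingorder} and the $\widetilde\lambda$-contraction apply. The pairing $\eta_B$ between $\mathfrak g^*$ and $\mathfrak g$ must also be tracked consistently throughout.
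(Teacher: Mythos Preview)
Your proposal is correct and follows essentially the same route as the paper: compute $\FT(f^{\mathfrak g}_{x,r,\underline{\mathcal O}^*})$ via lemma~\ref{lem:ft}, identify its support through equation~\eqref{eq:ftgggg}, and then apply lemma~\ref{lem:liftingorder}. The only cosmetic difference is that where you spell out the $\widetilde\lambda$-contraction to deduce $\tilde\alpha\in\overline{\mathcal O'^*}$, the paper simply cites the equivalent external statement (\cite[Proposition~3.1.2]{moyprasad}, cf.\ also \cite[Proposition~3.12]{barmoy}); and the lifting step you flag as the ``main technical obstacle'' is exactly what \cite[Corollary~4.3.2]{debacker} supplies (as invoked in the proof of corollary~\ref{cor:order-lift}), so it is not an open point.
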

\begin{proof}
    $(1)$ Suppose $\FT(\mu_{\mathcal O'^*})(f^{\mathfrak g}_{x,r,\underline{\mathcal O}^*}) \ne 0$.
    Then $\mu_{\mathcal O'^*}(\FT(f^{\mathfrak g}_{x,r,\underline{\mathcal O}^*}))\ne 0$ and so the support of $\FT(f^{\mathfrak g}_{x,r,\underline{\mathcal O}^*})$ must intersect $\mathcal O'^*$.
    But by lemma \ref{lem:ft}, $\FT(f^{\mathfrak g}_{x,r,\underline{\mathcal O}^*}) = C_{x,r}L_{x,-r}\FT_{x,r}(\Gamma_{\underline{\mathcal O}^*})$.
    Thus there is some element $\beta\in \supp(\FT_{x,r}(\Gamma_{\underline{\mathcal O}^*}))\subseteq \underline{\mathfrak g}^{*}_{x,-r} = \mathfrak g^*_{x,-r}/\mathfrak g^*_{x,(-r)^+}$ viewed as a coset in $\mathfrak g^*_{x,-r}$ that intersects $\mathcal O'^*$.
    By equation \eqref{eq:ftgggg}, $\underline G_x.\beta \cap \Sigma_{\underline{\mathcal O}^*}\ne \emptyset$.
    By lemma \ref{lem:liftingorder} and \cite[Proposition 3.1.2]{moyprasad}, $\mathscr L_{x,-r}(\underline{\mathcal O}^*) \le \mathcal O'^*$ as required.
    $(2)$ Follows since $\underline {\mathcal O}^*\subseteq \supp(\FT_{x,r}(\Gamma_{\underline{\mathcal O}^*}))$.
\end{proof}


Recall that every irreducible representation $(\pi,V)$ has rational depth.
\begin{theorem}
    \label{thm:shallow}
    Let $(\pi,V)$ be a smooth irreducible representation of $G$ with depth $\rho(\pi)$ and let $r\in \mathbb R_{>0}$, $r\ge \rho(\pi)$.

    Let $\mathscr C\subset \mathscr B(\mathbb G,F)$ be a subset such that every nilpotent orbit is in the image of $\mathscr L_{x,-r}$ for some $x\in \mathscr C$.
    
    Then 
    $$\WF(\pi) = \max\bigcup_{x\in \mathscr C}\mathscr L_{x,-r}(\overline{\WF}(\theta^*_{\pi,x,r})) = \max\bigcup_{x\in \mathscr C}-\mathscr L_{x,-r}(\cone(\supp(\FT_{x,r}(\theta_{\pi,x,r})))).$$
\end{theorem}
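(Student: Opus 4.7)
The plan is to combine the upper-triangular property of the test functions $f_{x,r,\underline{\mathcal O}^*}$ (Proposition \ref{prop:orbitalint}) with the local character expansion applied to them. Since $r\ge \rho(\pi)$, the results of Waldspurger and DeBacker guarantee $G_{r^+}\subseteq U_\pi$, and Lemma \ref{lem:eval}(1) ensures $f_{x,r,\underline{\mathcal O}^*}$ is supported in $G_{r^+}$. So the local character expansion gives
\[
\Theta_\pi(f_{x,r,\underline{\mathcal O}^*}) \;=\; \sum_{\mathcal O'^*\in \mathcal N(\mathfrak g^*)/G} c_{\mathcal O'^*}(\pi)\,\FT(\mu_{\mathcal O'^*})(f^{\mathfrak g}_{x,r,\underline{\mathcal O}^*}),
\]
while Lemma \ref{lem:eval}(2) says that $\Theta_\pi(f_{x,r,\underline{\mathcal O}^*})\ne 0$ is equivalent to $\underline{\mathcal O}^*\in \overline{\WF}(\theta^*_{\pi,x,r})$.

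For the first equality I would argue by two inclusions. For $(\subseteq)$: given $\mathcal O^*\in \WF(\pi)$, the surjectivity hypothesis on $\mathscr C$ furnishes $x\in \mathscr C$ and $\underline{\mathcal O}^*$ with $\mathscr L_{x,-r}(\underline{\mathcal O}^*)=\mathcal O^*$. By Proposition \ref{prop:orbitalint}(1) only orbits $\mathcal O'^*\ge \mathcal O^*$ contribute to the expansion above, and by maximality of $\mathcal O^*$ in the support of the $c_{\mathcal O'^*}(\pi)$ only the term $\mathcal O'^*=\mathcal O^*$ survives; Proposition \ref{prop:orbitalint}(2) makes this term nonzero, so $\underline{\mathcal O}^*\in \overline{\WF}(\theta^*_{\pi,x,r})$ and $\mathcal O^*$ lies in the union. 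For $(\supseteq)$: if $\underline{\mathcal O}^*\in \overline{\WF}(\theta^*_{\pi,x,r})$ for some $x\in \mathscr C$, then $\Theta_\pi(f_{x,r,\underline{\mathcal O}^*})\ne 0$, forcing some $c_{\mathcal O'^*}(\pi)\ne 0$ with $\mathcal O'^*\ge \mathscr L_{x,-r}(\underline{\mathcal O}^*)$; so $\mathscr L_{x,-r}(\underline{\mathcal O}^*)$ is bounded above by an element of $\WF(\pi)$, and the $\max$ operation on both sides erases these non-maximal contributions.

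For the second equality I would apply Lemma \ref{lem:wf-cone-relation} to $\theta^*_{\pi,x,r}$, which is a $\underline G_x$-invariant character of $\underline{\mathfrak g}_{x,r}$ since it is the character of the dual of the finite-dimensional representation $\underline \pi_{x,r}$. This gives $\overline{\WF}(\theta^*_{\pi,x,r})=\cone(\supp(\FT(\theta^*_{\pi,x,r})))$. The identities $\overline{\chi_{\mathcal O^*}}=\chi_{-\mathcal O^*}$ and $\FT(\chi_{\mathcal O^*})=q^{N/2}1_{\mathcal O^*}$ from Lemma \ref{lem:irrchar} yield $\supp(\FT(\theta^*_{\pi,x,r}))=-\supp(\FT(\theta_{\pi,x,r}))$. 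Finally, both the degeneration relation (via the $\mathfrak{sl}_2$-triple $(-e,h,-f)$) and the lifting map $\mathscr L_{x,-r}$ are equivariant under $\alpha\mapsto -\alpha$, so the $-$ sign passes outside $\cone$ and outside $\mathscr L_{x,-r}$, producing the displayed formula.

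The main conceptual obstacle has already been dispensed with in Proposition \ref{prop:orbitalint}, namely the order-theoretic comparison between DeBacker's lifting map and the closure order on $\mathcal N(\mathfrak g^*)/G$; what remains here is the bookkeeping that converts the upper-triangular system into the statement about maxima, and the careful tracking of signs introduced by the complex conjugation on $\theta_{\pi,x,r}$. Neither step requires new geometric input, so the entire proof should be short once the dictionary between $\Theta_\pi(f_{x,r,\underline{\mathcal O}^*})$ and the finite-field invariant $\overline{\WF}(\theta^*_{\pi,x,r})$ is deployed.
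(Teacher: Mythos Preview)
Your proposal is correct and follows essentially the same route as the paper's proof: apply the local character expansion to $f_{x,r,\underline{\mathcal O}^*}$, use Proposition~\ref{prop:orbitalint} to reduce to the single term $\mathcal O^*=\mathscr L_{x,-r}(\underline{\mathcal O}^*)$ when $\mathcal O^*\in\WF(\pi)$, invoke Lemma~\ref{lem:eval}(2) to translate nonvanishing into $\underline{\mathcal O}^*\in\overline{\WF}(\theta^*_{\pi,x,r})$, and then use $\overline{\chi_{\mathcal O^*}}=\chi_{-\mathcal O^*}$ together with Lemma~\ref{lem:wf-cone-relation} for the second equality. Your sign-tracking for the second equality is in fact more explicit than the paper's, which simply records the conjugation identity and cites Lemma~\ref{lem:wf-cone-relation}.
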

\begin{proof}
    Let
    $$\Theta_\pi = \sum_{\mathcal O^*\in \mathcal N(\mathfrak g^*)/G}c_{\mathcal O^*}(\pi)\FT(\mu_{\mathcal O^*})\circ \exp^*$$
    where $c_{\mathcal O^*}(\pi)\in \mathbb C$, denote the local character expansion for $\Theta_\pi$. 
    Since the functions $f_{x,r,\underline{\mathcal O}}$ are suppored on $G_{r+}$, they are defined on the domain of validity for the local character expansion and so
    \begin{align*}
       \Theta_\pi(f_{x,r,\underline{\mathcal O}^*}) &= \sum_{\mathcal O'^*\in \mathcal N(\mathfrak g^*)/G}c_{\mathcal O'^*}(\pi)\FT(\mu_{\mathcal O'^*})(f^{\mathfrak g}_{x,r,\underline{\mathcal O}^*}) \\
       &= \sum_{\substack{\mathcal O'^*\in \mathcal N(\mathfrak g^*)/G:\\ \mathscr L_{x,-r}(\underline{\mathcal O}^*)\le \mathcal O'^*}}c_{\mathcal O^*}(\pi)\FT(\mu_{\mathcal O^*})(f^{\mathfrak g}_{x,r,\underline{\mathcal O}^*})
    \end{align*}
    where the last equality follows from proposition \ref{prop:orbitalint}.(1).
    
    We will first show that if $x\in \mathscr C$ and $\underline{\mathcal O}^*\in \mathcal N(\underline{\mathfrak g}_{x,-r})/\underline G_x$ is such that $\mathscr L_{x,-r}(\underline{\mathcal O}^*) \in \WF(\pi)$, then $\underline{\mathcal O}^*\in \overline{\WF}(\theta^*_{\pi,x,r})$.

    Since $\mathcal O^*:=\mathscr L_{x,-r}(\underline {\mathcal O}^*)\in \WF(\pi)$, we have that $c_{\mathcal O'^*}(\pi) = 0$ for all orbits larger than $\mathcal O^*$ and so
    $$\Theta_\pi(f_{x,r,\underline{\mathcal O}^*}) = c_{\mathcal O^*}(\pi)\FT(\mu_{\mathcal O^*})(f^{\mathfrak g}_{x,r,\underline{\mathcal O}^*}).$$
    By lemma \ref{prop:orbitalint}.(2)
    $$\FT(\mu_{\mathcal O^*})(f^{\mathfrak g}_{x,r,\underline{\mathcal O}^*}) \ne 0.$$
    Therefore by lemma \ref{lem:eval}.(2)
    $$0 \ne c_{\mathcal O^*}(\pi)\FT(\mu_{\mathcal O^*})(f_{x,r,\underline{\mathcal O}^*}) = \Theta_\pi(f_{x,r,\underline{\mathcal O}^*}) = \mu(G_{x,r})\langle \overline{\theta}_{\pi,x,r},\Gamma_{\underline{\mathcal O}}\rangle_{\underline{\mathfrak g}_{x,r}}.$$
    Thus $\underline{\mathcal O}\in \overline{\WF}(\theta^*_{\pi,x,r})$.

    Finally, if $\underline{\mathcal O}^*\in \overline\WF(\theta^*_{\pi,x,r})$ then
    $$0 \ne \mu(G_{x,r})\langle \theta^*_{\pi,x,r},\Gamma_{\underline{\mathcal O}^*}\rangle_{\underline{\mathfrak g}_{x,r}} = \Theta_\pi(f_{x,r,\underline{\mathcal O}^*}).$$
    Thus there must be some $\mathcal O'^*\in \WF(\pi)$ such that $\mathscr L_{x,-r}(\underline{\mathcal O}^*)\le \mathcal O'^*$. 
    This implies the first equality.

    For the second equality simply note that $\overline\chi_{\underline{\mathcal O}^*} = \chi_{-\underline{\mathcal O}^*}$ and then apply lemma \ref{lem:wf-cone-relation}.

\end{proof}

\begin{corollary}\label{cor:z-n-graded}
    Let $(\pi,V)$ be a smooth irreducible representation of $G$ with depth $r>0$.

    Then $r=m/n\in \mathbb Q$ is rational and for any $x\in \mathscr B(\mathbb G,F)$, 
    $$\overline{\mathfrak g}_x^{un,r} := \bigoplus_{i\in\mathbb Z/n} \underline{\mathfrak g}^{un}_{x,ir}$$
    is a $\mathbb Z/n$-graded Lie algebra.

    Thus the wave front set of $\pi$ can be computed in terms of asymptotic cones of $\mathbb Z/n$-graded Lie algebras.
\end{corollary}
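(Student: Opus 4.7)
The plan is to deduce this corollary from Theorem \ref{thm:shallow} by two observations: that the depth of $\pi$ is automatically rational, and that the $\mathbb R/\mathbb Z$-graded Lie algebra $\overline{\mathfrak g}^{un}_x$ contains a natural $\mathbb Z/n$-graded sub-Lie algebra when $r = m/n$.

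First I would verify the rationality of the depth. The Moy-Prasad filtration $(G_{x,s})_{s\ge 0}$ has jumps only at a discrete subset of $\mathbb Q_{\ge 0}$, determined by the affine root datum at $x$. Since the depth of $\pi$ is the infimum of those $s\ge 0$ such that $V^{G_{x,s^+}}\neq 0$ for some $x\in \mathscr B(\mathbb G,F)$, and the set of all such jumps (as $x$ varies) is a locally finite subset of $\mathbb Q_{\ge0}$, this infimum is rational (and in fact attained). This is standard in Moy-Prasad theory and can be cited from \cite{moyprasad}.

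Next, write $r = m/n$ with positive integers $m,n$. Since $nr = m\in\mathbb Z$, the map $\mathbb Z/n\to \mathbb R/\mathbb Z$ sending $i\mapsto ir\bmod \mathbb Z$ is a well-defined group homomorphism whose image is a cyclic subgroup of order $n$. Restricting the $\mathbb R/\mathbb Z$-graded Lie algebra $\overline{\mathfrak g}^{un}_x$ (whose existence and grading properties are provided by hypothesis \ref{hyp:p}) to this subgroup gives
$$\overline{\mathfrak g}_x^{un,r} = \bigoplus_{i\in\mathbb Z/n} \underline{\mathfrak g}^{un}_{x,ir}.$$
The bracket of $\overline{\mathfrak g}^{un}_x$ maps $\underline{\mathfrak g}^{un}_{x,s}\times \underline{\mathfrak g}^{un}_{x,t}$ into $\underline{\mathfrak g}^{un}_{x,s+t}$, so it restricts to a bracket on $\overline{\mathfrak g}_x^{un,r}$ satisfying $[\underline{\mathfrak g}^{un}_{x,ir},\underline{\mathfrak g}^{un}_{x,jr}]\subset\underline{\mathfrak g}^{un}_{x,(i+j)r}$. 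This yields the claimed $\mathbb Z/n$-grading, compatible with the Frobenius by hypothesis \ref{hyp:p}.

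Finally, one applies Theorem \ref{thm:shallow}, which expresses $\WF(\pi)$ in terms of rational asymptotic cones in $\underline{\mathfrak g}^*_{x,-r}$. Since $-r$ corresponds to the class $-1 \in \mathbb Z/n$ under the identification above, this cone is computed in a graded piece of the $\mathbb Z/n$-graded Lie algebra $\overline{\mathfrak g}_x^{un,r}$. There is no serious obstacle: the argument is a packaging of Theorem \ref{thm:shallow}, and the only point requiring any genuine input is the rationality of depth, which rests on the discreteness of the Moy-Prasad jumps.
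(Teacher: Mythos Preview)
Your proposal is correct and matches the paper's treatment: the paper gives no explicit proof for this corollary, having recalled just before Theorem~\ref{thm:shallow} that every irreducible representation has rational depth and then stating the corollary as an immediate consequence. Your three steps (rationality of depth via Moy--Prasad, restriction of the $\mathbb R/\mathbb Z$-grading to the cyclic subgroup generated by $r$, and application of Theorem~\ref{thm:shallow}) simply spell out what the paper leaves implicit.
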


\begin{corollary}\label{cor:bound}
    Let $(\pi,V)$ be a smooth irreducible representation of $G$ with depth $\rho(\pi)>0$ and let $r\ge \rho(\pi)$.
    
    Let $\mathscr C\subset \mathscr B(\mathbb G,F)$ be a subset such that every nilpotent orbit is in the image of $\mathscr L_{x,-r}$ for some $x\in \mathscr C$.
    
    Then the wave front set of $(\pi,V)$ is bounded by
    $$\max\bigcup_{x\in \mathscr C}\mathscr L^{un}_{x,-r}(\overline{\cone}(\supp(\FT_{x,r}(\theta_{\pi,x,r})))) \quad \subseteq \mathfrak g^{un,*}.$$
\end{corollary}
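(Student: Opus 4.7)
The plan is to reduce the statement to Theorem \ref{thm:shallow} and then upgrade the rational-cone description of $\WF(\pi)$ to a geometric-cone bound by invoking Proposition \ref{prop:slice}(2) orbitwise and Corollary \ref{cor:order-lift}(2) for the lifting map.

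First I would apply Theorem \ref{thm:shallow} to write
$$\WF(\pi) \;=\; \max\bigcup_{x\in\mathscr C}\,-\mathscr L_{x,-r}\bigl(\cone(\supp(\FT_{x,r}(\theta_{\pi,x,r})))\bigr).$$
For each rational orbit $\underline{\mathcal O}^*\subseteq \supp(\FT_{x,r}(\theta_{\pi,x,r}))$, Lemma \ref{lem:wf-cone-relation} identifies $\overline{\WF}(\chi_{\underline{\mathcal O}^*})$ with $\cone(\underline{\mathcal O}^*)$, and Proposition \ref{prop:slice}(2) then gives $\cone(\underline{\mathcal O}^*)\subseteq \overline{\cone}(\underline{\mathcal O}^*)$. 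Unioning over the $\underline G_x$-orbits comprising the support (which is a union of orbits by $\underline G_x$-invariance of $\theta_{\pi,x,r}$) yields
$$\cone(\supp(\FT_{x,r}(\theta_{\pi,x,r})))\;\subseteq\;\overline{\cone}(\supp(\FT_{x,r}(\theta_{\pi,x,r}))).$$
The sign in front of $\mathscr L_{x,-r}$ is automatically absorbed: since $\overline{\mathbb F}_q^\times$ is divisible, $-1$ is a square, so the $\mathbb G_m$-action $\lambda\cdot v = \lambda^2 v$ appearing in Definition \ref{def:geo-cone} already contains negation, and consequently $\overline{\cone}(-S) = \overline{\cone}(S)$ for every set $S$.

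Next I would pass from rational to geometric orbits under the lifting map. Any rational nilpotent orbit $\underline{\mathcal O}^*$ lying in $\overline{\cone}(\supp(\FT_{x,r}(\theta_{\pi,x,r})))$ is, in particular, a subset of a (unique) geometric $\underline G^{un}_x$-orbit $\underline{\mathcal O}^{*,un}\subseteq \underline{\mathfrak g}^{un,*}_{x,-r}$, and this geometric orbit still lies in the geometric asymptotic cone of the support. Applying Corollary \ref{cor:order-lift}(2) to the containment $\underline{\mathcal O}^*\subseteq \overline{\underline{\mathcal O}^{*,un}}$ gives the dominance
$$\mathscr L_{x,-r}(\underline{\mathcal O}^*)\;\le\;\mathscr L^{un}_{x,-r}(\underline{\mathcal O}^{*,un})$$
in the closure order on nilpotent orbits of $\mathfrak g^{un,*}$.

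Stringing these containments together orbit by orbit and taking the union over $x\in\mathscr C$ shows that every element of $\WF(\pi)$ is dominated by an element of
$$\bigcup_{x\in\mathscr C}\mathscr L^{un}_{x,-r}\bigl(\overline{\cone}(\supp(\FT_{x,r}(\theta_{\pi,x,r})))\bigr)\;\subseteq\;\mathfrak g^{un,*},$$
and taking maxima yields the claimed bound. The argument is almost entirely formal once Theorem \ref{thm:shallow}, Proposition \ref{prop:slice}(2), and Corollary \ref{cor:order-lift}(2) are in hand; the main (still minor) point of substance is the sign-absorption via divisibility of $\overline{\mathbb F}_q^\times$, and the observation that a rational orbit viewed inside the geometric ambient variety is contained in its geometric $\underline G^{un}_x$-orbit so the hypothesis of Corollary \ref{cor:order-lift}(2) is satisfied.
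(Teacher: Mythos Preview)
Your argument is correct and follows essentially the same route as the paper's proof: invoke Theorem \ref{thm:shallow}, use Proposition \ref{prop:slice} (together with Lemma \ref{lem:wf-cone-relation}) to pass from $\cone$ to $\overline{\cone}$, absorb the sign via the fact that $-1$ is a square in $\overline{\mathbb F}_q$, and then apply Corollary \ref{cor:order-lift} to compare the rational and unramified lifting maps. The paper's own proof is more terse but uses exactly these ingredients.
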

\begin{proof}
    By proposition \ref{prop:slice}, for any subset $S\subset \underline{\mathfrak g}_{x,r}$ we have $\cone(S) \subset \overline\cone(S)$.
    Since $-1$ is a square in $\overline{\mathbb F}_q$ in fact $-\cone(S)\subset\overline\cone(S)$.
    The result then follows from theorem \ref{thm:shallow} and corollary \ref{cor:order-lift}.
\end{proof}

\section{Example}
\label{sec:adler}
In this section we demonstrate the usage of theorem \ref{thm:shallow} by computing the wave front set of certain positive-depth toral supercuspidal representations and recovering DeBacker--Reeder's condition of genericity \cite{debacker_reeder_2010}.

\begin{theorem}
    Let $\mathbb G$ be a semisimple unramified group and $X$ be a regular good element of $\mathfrak g$ of depth $-r$ whose centraliser is an unramified minisotropic torus $S$ (c.f. \cite[\S2.6]{debacker_reeder_2010}).

    Suppose hypotheses \ref{hyp:exp}, \ref{hyp:p}, and \ref{hyp:bilinear-form} are met.
    
    Let $\Pi_X$ denote the set of supercuspidal representations attached to $X$ via the construction outlined in \cite[\S2.6]{debacker_reeder_2010}.

    Let $x$ be the unique fixed point of $S$ in $\mathscr B(\mathbb G,F)$, and let $\underline X$ denote the image of $X$ in $\underline{\mathfrak g}_{x,-r}$.

    Then
    \begin{enumerate}
        \item $r\in \mathbb Z$ and so $\overline{\mathfrak g}_x^r = \underline{\mathfrak g}_{x,0}$ is a $\mathbb Z/1$-graded (i.e. ungraded) Lie algebra,\label{ex:1}
        \item let $x_0$ be a hyperspecial point of $\mathscr B(\mathbb G,\Fun)$. Then $\underline{\mathfrak g}_{x,0}$ can be identified with the fixed points of an inner automorphism (coming from a semisimple element of $\underline G_{x_0}$) of $\underline{\mathfrak g}_{x_0,0}$,\label{ex:2}
        \item the geometric wave front set of any $\pi\in \Pi_X$ is a singleton whose weighted Dynkin diagram coincides with that of the saturation of $\mathscr N(\underline X)$ in $\underline{\mathfrak g}_{x_0,0}$,\label{ex:3}
        \item $\pi\in \Pi_X$ is generic if and only if $x$ is hyperspecial.\label{ex:4}
    \end{enumerate}
\end{theorem}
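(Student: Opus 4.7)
The plan is to handle the four items in sequence, each reducing to a structural fact already developed earlier in the paper or to standard Bruhat--Tits theory.

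For (\ref{ex:1}), since $S$ is an unramified minisotropic torus, the jumps of its Moy--Prasad filtration occur only at integer values; as $X\in \mathrm{Lie}(S)$ has depth $-r$, this forces $r\in\mathbb Z$. Corollary \ref{cor:z-n-graded} with $n=1$ then collapses $\overline{\mathfrak g}_x^r$ to $\underline{\mathfrak g}_{x,0}$, placing us in the ungraded setting of Section \ref{sec:ungraded}.

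For (\ref{ex:2}), I invoke the standard description of reductive quotients at points of the building of an unramified group: choose a hyperspecial vertex $x_0\in\mathscr B(\mathbb G,\Fun)$ in an apartment containing $x$, so that translation from $x_0$ to $x$ is realised by $\lambda(\varpi)$ for a cocharacter $\lambda$ of a maximal $\Fun$-split torus and a uniformiser $\varpi$. The subgroup $\underline G^{un}_{x,0}\subset \underline G^{un}_{x_0,0}$ is then the fixed-point subgroup of the inner automorphism $\mathrm{Ad}(\overline{\lambda(\varpi)})$, where the bar denotes the image in the hyperspecial reductive quotient.

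For (\ref{ex:3}), the DeBacker--Reeder construction realises each $\pi\in\Pi_X$ as compactly induced from a representation of $G_x$ whose restriction to $G_{x,r^+}$ acts by a character identified (via $B$) with the image $\underline X$ of $X$ in $\underline{\mathfrak g}^*_{x,-r}$. Consequently, $\underline\pi_{x,r}=\pi^{G_{x,r^+}}$ decomposes as a sum of characters $\chi_{\underline G_x.\underline X}$ (together with their Galois conjugates), so $\supp(\FT_{x,r}(\theta_{\pi,x,r}))=\underline G_x.\underline X$. Regularity of $X$ together with the goodness hypothesis ensure that $\underline X\in\underline{\mathfrak g}_{x,0}$ is regular semisimple with centraliser the maximal torus $T=\mathrm{image}(S)\subset\underline G^{un}_x$. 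Theorem \ref{thm:ratwfungraded} then gives $\overline{\cone}(\underline X)=\overline{\mathrm{Ind}_T^{\underline G^{un}_x}(0)}$, which is the regular nilpotent orbit of $\underline G^{un}_x$. Feeding this into Corollary \ref{cor:bound} and Theorem \ref{thm:shallow}, we obtain a single nilpotent orbit $\mathscr L^{un}_{x,-r}(\mathrm{Ind}_T^{\underline G^{un}_x}(0))$ which bounds and (by Theorem \ref{thm:ratwfungraded}'s rational-point statement) equals $\WF(\pi)$. Its weighted Dynkin diagram matches that of the saturation of $\mathscr N(\underline X)$ inside $\underline{\mathfrak g}_{x_0,0}$ because DeBacker's lifting and pseudo-Levi saturation both correspond to the same operation on weighted Dynkin diagrams.

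For (\ref{ex:4}), genericity of $\pi$ is equivalent to $\WF(\pi)$ being the regular nilpotent orbit of $\mathbb G$. By (\ref{ex:3}) this occurs iff the saturation of the regular nilpotent orbit of $\underline G^{un}_x$ in $\underline G^{un}_{x_0}\cong\mathbb G$ is again regular, which via (\ref{ex:2}) is equivalent to $\underline G^{un}_x=\underline G^{un}_{x_0}$, i.e.\ $x$ is hyperspecial.

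The main obstacle is (\ref{ex:3}), specifically pinning down $\supp(\FT_{x,r}(\theta_{\pi,x,r}))$ as a single $\underline G_x$-orbit (controlling contributions from Frobenius conjugates, intertwiners and extensions in the DeBacker--Reeder induction), and confirming that $\mathscr L^{un}_{x,-r}(\mathrm{Ind}_T^{\underline G^{un}_x}(0))$ equals the saturation via the pseudo-Levi inclusion. The latter should follow from the minimality characterisation of $\mathscr L^{un}_{x,-r}$ recalled in the introduction, combined with the compatibility of Lusztig--Spaltenstein induction with saturation on weighted Dynkin diagrams.
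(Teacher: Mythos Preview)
Your overall strategy matches the paper's, but there is a genuine gap in part~(\ref{ex:3}). Theorem~\ref{thm:shallow} and Corollary~\ref{cor:bound} both require taking a union over a set $\mathscr C\subset\mathscr B(\mathbb G,F)$ large enough that every nilpotent orbit lies in the image of $\mathscr L_{y,-r}$ for some $y\in\mathscr C$; the single point $x$ fixed by $S$ does not have this property. Your argument computes $\cone(\supp(\FT_{x,r}(\theta_{\pi,x,r})))$ only at $x$ and then asserts that this both bounds and equals $\WF(\pi)$, but without controlling the contributions from the other $y\in\mathscr C$ you obtain neither direction: a priori some other point could contribute a larger orbit to the union. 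The paper closes this gap by invoking \cite[Lemma~2.6, Corollary~2.5]{debacker_reeder_2010} to show that $\pi^{G_{y,r^+}}=0$ for every $y\in\mathscr C$ not $r$-associate to $x$, so that $\theta_{\pi,y,r}=0$ and those points contribute nothing. This vanishing is the real content of the DeBacker--Reeder input here, and it is precisely what your proposal omits.

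Your closing paragraph misidentifies the obstacle: determining $\supp(\FT_{x,r}(\theta_{\pi,x,r}))$ at the point $x$ is essentially immediate from the construction (the paper dispatches it in one line via \cite[Lemma~2.6]{debacker_reeder_2010}), and there is no issue with Frobenius conjugates or intertwiners at the level of the $G_{x,r}/G_{x,r^+}$-action. The step that actually requires care is the vanishing of invariants at the other building points. Once that is in place, the rest of your outline for (\ref{ex:3}) and (\ref{ex:4}) agrees with the paper; for (\ref{ex:2}) the paper simply cites \cite[Theorem~4.1]{reeder-yu} rather than spelling out the cocharacter translation, but the content is the same.
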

\begin{proof}
    \eqref{ex:1} By \cite[\S2.6]{debacker_reeder_2010} the condition that $S$ is unramified implies that $r\in \mathbb Z$.

    \eqref{ex:2} The point $x$ is a generalised $r$-facet and so in particular is a rational point of order $1$.
    The result then follows from \cite[Theorem 4.1]{reeder-yu}.
    
    \eqref{ex:3} We wish to apply theorem \ref{thm:shallow} and its corollary \ref{cor:z-n-graded}.
    Let $\mathscr C\subset \mathscr B(\mathbb G,F)$ denote a set of points including the point $x$, such that every co-adjoint nilpotent orbit lies in the image of $\mathscr L_{y,-r}$ for some $y\in \mathscr C$.
    
    We can discard any point $y\in \mathscr C$ $r$-associate to $x$ since $\mathscr L_{y,-r}$ is constant on the $r$-association classes.

    Let $\pi\in\Pi_X$.
    By  \cite[Lemma 2.6, Corollary 2.5]{debacker_reeder_2010}, 
    \[\pi^{G_{y,r^+}} = 0, \quad\text{for all }y\in \mathscr C, \ y\ne x.\]
    Thus by Theorem \ref{thm:shallow} 
    \begin{equation}\WF(\pi) = \max-\mathscr L_{x,-r}(\cone(\supp(\FT(\theta_{\pi,x,r})))).\end{equation}
    By the argument in \cite[Lemma 2.6]{debacker_reeder_2010} we see that 
    $$\supp(\FT(\theta_{\pi,x,r})) = \underline G_x.\underline X.$$

    By corollary \ref{cor:bound} and the first part of theorem \ref{thm:ratwfungraded}, the geometric wave front set is contained in the closure of 
    $$\mathbb G(\bar F).\mathscr L_{x,-r}(\mathscr N(\underline X)).$$
    Theorem \ref{thm:shallow} and the second part of theorem \ref{thm:ratwfungraded} implies that the geometric wave front set is in fact equal to this bound.

    Since $\mathfrak g^{un}$ and $\underline{\mathfrak g}_{x_0,0}$ have the same root data, there is a matching of geometric orbits by comparing weighted Dynkin diagrams.

    If $E\in \mathfrak g_{x,r},H\in \mathfrak g_{x,0},F\in \mathfrak g_{x,-r}$ is an $\mathfrak {sl}_2$-triple such that $\underline E \in  \mathscr N(\underline X)^\Fr$ then $\underline E,\underline H,\underline F$ is an $\mathfrak {sl}_2$-triple for $\mathscr N(\underline X)$.

    The pairings of $H$ and $\underline H$ with the character lattice of a maximal $\Fun$-split torus coincide.
    It follows that the geometric orbit of $\mathbb G(\bar F).E = \mathbb G(\bar F).\mathscr L_{x,-r}(\mathscr N(\underline X))$ has the same weighted Dynkin diagram as the saturation of $\mathscr N(\underline X)$ in $\underline {\mathfrak g}_{x_0,0}$.
    
    \eqref{ex:4} By part \eqref{ex:3}, $\pi$ is generic if and only if the saturation of $\mathscr N(\underline X)$ in $\underline {\mathfrak g}_{x_0,0}$ is the regular orbit.
    Since $\mathfrak g_{x,0}$ is the fixed point of an inner automorphism coming from a semisimple element $s$, it contains a regular nilpotent element of $\underline{\mathfrak g}_{x_0,0}$ if and only if $s$ is central.
    Now $s$ is central if and only if $\underline{\mathfrak g}_{x,0}$ is equal to $\underline{\mathfrak g}_{x_0,0}$.
    This is the case if and only if $x$ is hyperspecial.
\end{proof}

\begin{sloppypar} 
\printbibliography[title={References}] 
\end{sloppypar}

\end{document}